\numberwithin{equation}{section}
\numberwithin{figure}{section}
\numberwithin{table}{section}
\newtheorem{Lemma}{Lemma}[section]
\newtheorem{Proposition}[Lemma]{Proposition}
\newtheorem{Theorem}[Lemma]{Theorem}
\newtheorem{Assumption}[Lemma]{Assumption}
\newtheorem{Remark}[Lemma]{Remark}
\numberwithin{equation}{section}
\begin{document}

\title{Large Deviation Principle for the Exploration Process of the
Configuration Model }
\author{Shankar Bhamidi, Amarjit Budhiraja, Paul Dupuis, Ruoyu Wu}
\maketitle

\begin{abstract}
\noindent \textbf{THIS SUBMISSION HAS BEEN REPLACED WITH new url \url{https://arxiv.org/abs/1912.04714} with new results.} 

The configuration model is a sequence of random graphs constructed such that in the large network limit  the degree distribution converges to a pre-specified probability distribution. The component structure of such random graphs can be obtained from an infinite dimensional Markov chain referred to as the exploration process. We establish a large deviation principle for the exploration process associated with the configuration model. Proofs rely on a representation of the exploration process as a system of stochastic differential equations driven by Poisson random measures and variational formulas for moments of nonnegative functionals of Poisson random measures. Uniqueness results for certain controlled systems of deterministic equations play a key role in the analysis. Applications of the large deviation results, for studying asymptotic behavior of the degree sequence in large components of the random graphs, are discussed. \newline
\ \newline

\noindent

\noindent \textbf{AMS 2010 subject classifications:} 60F10, 60C05, 05C80, 90B15.\newline
\ \newline

\noindent \textbf{Keywords:} large deviation principle, random graphs, sparse regime,
configuration model, branching processes, variational representations,
exploration process, singular dynamics, giant component.
\end{abstract}


\section{Introduction}

\label{sec:intro} The goal of this work is to study large deviation
properties of certain random graph models. The system of interest is the
so-called \emph{configuration model } which refers to a sequence of random
graphs with number of vertices approaching infinity and the degree
distribution converging to a pre-specified probability distribution on the
set of non-negative integers \cite%
{bender1978asymptotic,bollobas1980probabilistic,molloy1995critical}. The
configuration model is a basic object in probabilistic combinatorics (cf.\ 
\cite{Hofstad2016} and references therein) and is one of the standard
workhorses in the study of networks in areas such as epidemiology \cite%
{newman2002spread} and community detection \cite%
{fortunato2010community,lancichinetti2011finding}. An important problem for
such random graph models is to estimate probabilities of non-typical
structural behaviors, particularly when the system size is large. Examples
of such behavior include, graph components that are larger or smaller than
that predicted by the law of large number analysis or degree distributions
within components that deviate significantly from their expected values.

A natural formulation of such problems of rare event probability estimation
is through the theory of large deviations. Large deviations for random graph
models has been a topic of significant recent research activity (see, e.g., \cite%
{chatterjee2011large,bordenave2015large,puhalskii2005stochastic,o1998some,choi2013large}%
). Much of the work in this area is focused on the class of \emph{dense}
random graph models (number of edges in the graph scale like $n^{2}$ where $%
n $ is the number of vertices). In this regime, the theory of graphons
obtained under dense graph limits \cite%
{borgs2008convergent,borgs2012convergent,lovasz2012large} has emerged as a
key tool in the study of large deviation asymptotics. In contrast to the
above papers, the focus in the current work is on a \emph{sparse} random
graph setting where the average degree of a typical vertex is $O(1)$ so that
the number of edges in the graph are $O(n)$ as $n\rightarrow \infty $. Here
the techniques used for the large deviations study are quite different.

The starting point of our analysis is a dynamical construction of the
configuration model given through a discrete time infinite dimensional
Markov chain referred to as the exploration process. As the name suggests,
the exploration process is constructed by first appropriately selecting a
vertex in the graph and then exploring the neighborhood of the chosen vertex
until the component of that vertex is exhausted. After this one moves on to
another `unexplored' vertex resulting in successive exploration of
components of the random graph until the entire graph has been explored. The
stochastic process corresponding to one particular coordinate of this
infinite dimensional Markov chain encodes the number of edges in any given
component through the length of its excursions away from zero. The remaining
coordinates of this Markov chain can be used to read off the number of
vertices of a given degree in any given component of the random graph. See Section \ref{sec:eea}
for a precise description of the state space of this Markov chain.
The
exploration process can be viewed as a small noise stochastic dynamical
system in which the transition steps are of size $O(1/n)$ with $n$ denoting the
number of vertices in the random graph. The main result of this work
(Theorem \ref{thm:main-ldp}) proves a large deviation principle for a
continuous time analog of the exploration process. This large deviation
principle through contraction principles  can be used to study various
asymptotic problems for the degree sequence in components of the associated random
graphs. These results are discussed in Section \ref{sec:examples}.

We now make some comments on proof techniques. The exploration process
associated with the $n$-th random graph (with $n$ vertices) in the
configuration model is described as an $\mathbb{R}^{\infty }$-valued `small
noise' Markov process $\{\boldsymbol{X}^{n}(j)\}_{j\in \mathbb{N}_{0}}$. Under our
assumptions, there exists a $N\in \mathbb{N}$ such that for all $%
j\geq nN$, $\boldsymbol{X}^{n}(j)=\boldsymbol{0}$ for all $n\in \mathbb{N}$. 
In order
to study large deviations for such a sequence, one usually considers a
sequence of continuous times processes, or equivalently $\mathbb{C}([0,N]:%
\mathbb{R}^{\infty })$-valued random variables, obtained by a linear
interpolation of $\{\boldsymbol{X}^{n}(j)\}_{j\in \mathbb{N}_{0}}$ over intervals of
length $1/n$. A large deviations analysis of such a sequence in the current
setting is challenging due to `diminishing rates' feature of the transition
kernel (see \eqref{eq:aj-dynamics}) which in turn leads to poor regularity
of the associated local rate function. By diminishing rates we mean the
property that probabilities of certain transitions, although non-zero, can
get arbitrarily close to $0$ as the system becomes large. In the model we
consider, the system will go through phases where some state transitions have
very low probabilities, that are separated by phases of `regular behavior',
many times. In terms of the underlying random graphs the first type of
phases correspond to time periods in the dynamic construction that are close
to the completion of exploration of one component and beginning of
exploration of a new component. The poor regularity of the local rate
function makes standard approximations of the near optimal trajectory that
are used in proofs of large deviation principles for such small noise
systems hard to implement. In order to overcome these difficulties we
instead consider a different continuous time process associated with the
exploration of the configuration model. This continuous time process is
obtained by introducing i.i.d.\ exponential random times before each step in the
edge exploration Markov chain. A precise description of this process is
given in terms of stochastic differential equations (SDE) driven by a
countable collection of Poisson random measures (PRM), where different PRMs
are used to describe the different types of transitions (see Section \ref%
{sec:model}). Although the coefficients in this SDE are discontinuous
functions, their dependence on the state variable is much more tractable
than the state dependence in the transition kernel of the discrete time
model.

Large deviations for small noise SDE driven by Brownian motions have been
studied extensively both in finite and infinite dimensions. An approach
based on certain variational representations for moments of nonnegative
functionals of Brownian motions and weak convergence methods \cite%
{BoueDupuis1998variational, BudhirajaDupuis2000variational} has been quite
effective in studying a broad range of such systems (cf. references in \cite{BudhirajaDupuisMaroulas2011variational}). A similar variational
representation for functionals of a Poisson random measure has been obtained
in \cite{BudhirajaDupuisMaroulas2011variational}. There have been several
recent papers that have used this representation for studying large
deviation problems (see, e.g., \cite{BudhirajaChenDupuis2013large,BudhirajaDupuisGanguly2015moderate, BudhirajaWu2017moderate}). This representation is the starting point of the analysis in  the current work as well, however the application of the representation to the setting considered here leads to some new challenges. One
key challenge that arises in the proof of the large deviations lower bound
can be described as follows. The proof of the lower bound based on
variational representations and weak convergence methods, for systems driven
by Brownian motions, requires establishing unique solvability of controlled
deterministic equations of the form 
\begin{equation}
dx(t)=b(x(t))dt+\sigma (x(t))u(t)dt,\;x(0)=x_{0},  \label{eq:condifdet}
\end{equation}%
where $u\in L^{2}([0,T]:\mathbb{R}^{d})$ (space of square integrable
functions from $[0,T]$ to $\mathbb{R}^{d}$) is a given control. It turns out
that the conditions that are typically introduced for the well-posedness of
the original small noise stochastic dynamical system of interest (e.g.\
Lipschitz properties of the coefficients $b$ and $\sigma $) are enough to
give the wellposedness of \eqref{eq:condifdet}. For example when the
coefficients are Lipschitz, one can use a standard argument based on
Gronwall's lemma and an application of the Cauchy-Schwarz inequality to
establish the desired uniqueness property. In contrast, when studying
systems driven by a PRM one instead needs to establish wellposedness of
controlled equations of the form 
\begin{equation}
x(t)=x(0)+\int_{[0,t]\times S}1_{[0,g(x(s))]}(y)\varphi
(s,y)ds\,m(dy),\;0\leq t\leq T,  \label{eq:contdetpoi}
\end{equation}%
where $S$ is a locally compact metric space, $m$ a locally finite measure on 
$S$, $g \colon \mathbb{R}\rightarrow \mathbb{R}_+$ is a measurable map and the
control $\varphi $ is a nonnegative measurable map on $[0,T]\times S$ which
satisfies the integrability property 
\begin{equation*}
\int_{\lbrack 0,T]\times S}\ell (\varphi (s,y))ds\,m(dy)<\infty ,
\end{equation*}%
where $\ell (x)=x\log x-x+1$. If $\varphi $ were uniformly bounded and $g$
sufficiently regular (e.g., Lipschitz) uniqueness follows once more by a
standard Gronwall argument. However, in general if $g$ is not Lipschitz or $%
\varphi $ is not bounded (both problems arise in the problem considered
here, see e.g. \eqref{eq:psi}-\eqref{eq:phi_k}) the problem of uniqueness
becomes a challenging obstacle. One of the novel contributions of this work
is to obtain uniqueness results for equations of the form %
\eqref{eq:contdetpoi} when certain structural properties are satisfied. The
setting we need to consider is more complex than the one described above in
that there is an infinite collection of coupled equations (one of which
corresponds to the Skorokhod problem for one dimensional reflected
trajectories) that describe the controlled system. However the basic
difficulties can already be seen for the simpler setting in %
\eqref{eq:contdetpoi}. Although for a general $\varphi $ the unique
solvability of equations of the form \eqref{eq:contdetpoi} may indeed be
intractable, the main idea in our approach is to argue that one can perturb
the original $\varphi $ slightly so that $x(\cdot )$ is the unique solution
of the corresponding equation with the perturbed $\varphi $. Furthermore the
cost difference between the original and perturbed $\varphi $ is
appropriately small. The uniqueness result given in Lemma \ref%
{lem:uniqueness} is a key ingredient in the proof of the lower bound given
in Section \ref{sec:lower}. The proof of the upper bound, via the weak
convergence based approach to large deviations relies on establishing suitable tightness and limit characterization
results for certain controlled versions of the original small noise system.
This proof is given in Section \ref{sec:upper}.

The paper is organized as follows. In Section \ref{sec:assuandres} we
introduce the configuration model, our main assumptions, and the discrete
time exploration process. We then introduce the continuous time analogue of
the exploration process. We conclude Section \ref{sec:assuandres} by introducing the rate
function and presenting our main large deviations result. In Section \ref%
{sec:examples} we comment on some applications of this large deviation
principle. In particular we deduce well know law of large number results for
the configuration model \cite{Janson2009new} and announce a result, which
will be proved in a forthcoming paper, on large deviation asymptotics for
degree distributions in components of the configuration model. Section \ref%
{sec:repnWCCP} presents the variational representation from \cite%
{BudhirajaDupuisMaroulas2011variational} for functionals of PRM that is the
starting point of our proofs. Some tightness and characterization results
that are used both in the upper and lower bound proofs are also given in
this section. Next, Section \ref{sec:upper} gives the proof of the large
deviation upper bound whereas the proof of the lower bound is given in
Section \ref{sec:lower}. Finally, Section \ref{sec:rate_function}
establishes the compactness of level sets of the function $I_{T}$ defined in
Section \ref{sec:main-result}, thus proving that $I_{T}$ is a rate function.

\subsection{Notation}

The following notation will be used. For a Polish space $\mathbb{S}$, denote
the corresponding Borel $\sigma$-field by $\mathcal{B}(\mathbb{S})$. 
Denote by $\mathcal{P}(\mathbb{S})$ (resp.\ $\mathcal{M}(\mathbb{S})$) the
space of probability measures (resp. finite measures) on $\mathbb{S}$,
equipped with the topology of weak convergence. Denote by $\mathbb{C}_b(%
\mathbb{S})$ (resp.\ $\mathbb{M}_b(\mathbb{S})$) the space of real bounded
and continuous functions (resp.\ bounded and measurable functions). For $f
\colon \mathbb{S} \to \mathbb{R}$, let $\|f\|_\infty \doteq \sup_{x \in 
\mathbb{S}} |f(x)|$. 
For a Polish space $\mathbb{S}$ and $T>0$, denote by $\mathbb{C}([0,T]:\mathbb{S})$
(resp.\ $\mathbb{D}([0,T]:\mathbb{S})$) the space of continuous functions
(resp.\ right continuous functions with left limits) from $[0,T]$ to $%
\mathbb{S}$, endowed with the uniform topology (resp.\ Skorokhod topology). 
We say a collection $\{ X^n \}$ of $\mathbb{S}$-valued random variables is
tight if the distributions of $X^n$ are tight in $\mathcal{P}(\mathbb{S})$.
A sequence of $\mathbb{D}([0,T]:\mathbb{S})$-valued random variable is said
to be $\mathcal{C}$-tight if it is tight in $\mathbb{D}([0,T]:\mathbb{S})$
and every weak limit point takes values in $\mathbb{C}([0,T]:\mathbb{S})$
a.s. We use the symbol `$\Rightarrow$' to denote convergence in
distribution. 


We denote by $\mathbb{R}^{\infty}$ the space of all real sequences which is
identified with the countable product of copies of $\mathbb{R}$. This space
is equipped with the usual product topology. For ${\boldsymbol{x}}=(x_k)_{k
\in \mathbb{N}}, {\boldsymbol{y}}=(y_k)_{k \in \mathbb{N}}$, we write ${%
\boldsymbol{x}} \le {\boldsymbol{y}}$ if $x_k \le y_k$ for each $k \in 
\mathbb{N}$. Let $\mathcal{C} \doteq \mathbb{C}([0,T]:\mathbb{R})$, $%
\mathcal{C}_\infty \doteq \mathbb{C}([0,T]:\mathbb{R}^\infty)$, $\mathcal{D}
\doteq \mathbb{D}([0,T]:\mathbb{R})$, $\mathcal{D}_\infty \doteq \mathbb{D}%
([0,T]:\mathbb{R}^\infty)$. 
Let $x^+ \doteq \max \{x,0\}$ for $x \in \mathbb{R}$.
Denote by $\mathbb{R}_+$ the set of all non-negative real numbers. 
Let $\mathbb{N}_0 \doteq \mathbb{N} \cup \{0\}$.
Cardinality of a set $A$ is denoted by $|A|$. For $n \in \mathbb{N}$, let $%
[n] \doteq \{1,2,\dotsc,n\}$.

\section{Assumptions and Results}

\label{sec:assuandres} Fix $n \in \mathbb{N}$. We start by describing the
construction of the configuration model of random graphs 
with vertex set $[n]$. Detailed description and further references for various constructions of the configuration model can be found in \cite[Chapter 7]{Hofstad2016}.

\subsection{The configuration model and assumptions}

Let ${\boldsymbol{d}}(n)=\{d_{i}^{(n)}\}_{i\in \lbrack n]}$ be a degree
sequence, namely a sequence of non-negative integers such that $%
\sum_{i=1}^{n}d_{i}^{(n)}$ is even. Let $2m^{(n)}\doteq
\sum_{i=1}^{n}d_{i}^{(n)}$. We will usually suppress the dependence of $%
d_{i}^{(n)}$ and $m^{(n)}$ on $n$ in the notation. Using the sequence $%
\{d_{i}\}$ we construct a random graph on $n$ labelled vertices $[n]$ as follows: (i)
Associate with each vertex $i\in \lbrack n]$ $d_{i}$ \emph{half-edges}. (ii)
Perform a uniform random matching on the $2m$ half-edges to form $m$ edges
so that every edge is composed of two half-edges. This procedure creates a
random multigraph $G([n],{\boldsymbol{d}}(n))$ with $m$ edges, allowing for
multiple edges and self-loops, and is called the \emph{configuration model}
with degree sequence ${\boldsymbol{d}}(n)$. Since we are concerned with
connectivity properties of the resulting graph, vertices with degree zero
play no role in our analysis, and therefore we assume that $d_{i}>0$ for all 
$i\in \lbrack n],~n\geq 1$. We make the following additional assumptions on
the collection $\{{\boldsymbol{d}}(n),n\in \mathbb{N}\}$.

\begin{Assumption}
\label{asp:convgN} There exists a probability distribution ${\boldsymbol{p}}%
\doteq \left\{ p_{k}\right\} _{k\in \mathbb{N}}$ on $\mathbb{N}$ such that,
writing $n_{k}^{\scriptscriptstyle(n)}\doteq |\left\{ i\in \lbrack n]:d_{i}=k\right\} |$ for the number of vertices with degree $%
k $, 
\begin{equation*}
\frac{n_{k}^{\scriptscriptstyle(n)}}{n}\rightarrow p_{k}\mbox{ as }%
n\rightarrow \infty ,\mbox{ for all }k\in \mathbb{N}.
\end{equation*}
\end{Assumption}

We will also usually suppress the dependence of $n_{k}^{\scriptscriptstyle(n)}$ on $n$ in the notation.
We make the following assumption on moments of the degree distribution.

\begin{Assumption}
\label{asp:exponential-boundN} There exists some $\varepsilon_{\boldsymbol{p}%
} \in (0,\infty)$ such that $\sup_{n \in \mathbb{N}} \sum_{k=1}^{\infty} 
\frac{n_k}{n}k^{1+\varepsilon_{\boldsymbol{p}}} < \infty$. 
\end{Assumption}

The above two assumptions will be made throughout this work.

\begin{Remark}
\label{rem1.1}
\begin{enumerate}[\upshape (i)]
\item Note that Assumptions \ref{asp:convgN} and \ref{asp:exponential-boundN}%
, along with Fatou's lemma, imply that $\sum_{k=1}^{\infty
}p_{k}k^{1+\varepsilon _{\boldsymbol{p}}}<\infty $. Conversely, if $%
\sum_{k=1}^{\infty }p_{k}k^{\lambda }<\infty $ for some $\lambda \in
(4,\infty )$ and $\{D_{i}\}_{i\in \mathbb{N}}$ is a sequence of i.i.d.\ $%
\mathbb{N}$-valued random variables with common distribution $%
\{p_{k}\}_{k\in \mathbb{N}}$, then using a Borel--Cantelli argument it can be
shown that for a.e.\ $\omega $, Assumptions \ref{asp:convgN} and \ref%
{asp:exponential-boundN} are satisfied with $d_{i}=D_{i}(\omega )$, $i\in
\lbrack n]$, $n\in \mathbb{N}$, and $\varepsilon _{\boldsymbol{p}}=\frac{%
\lambda }{4}-1$.
\item Under Assumptions \ref{asp:convgN} and \ref{asp:exponential-boundN}, $%
\mu\doteq\sum_{k=1}^\infty kp_k < \infty$ and the total number of edges $m = \frac{1}{2} \sum_{i=1}^n d_i$ satisfies $\frac{m}{n} \to \frac{1}{2} \sum_{k=1}^\infty kp_k$ as $n \to \infty$.
\end{enumerate}
\end{Remark}

\subsection{Edge-exploration algorithm (EEA)}

\label{sec:eea}

One can construct $G([n],{\boldsymbol{d}}(n))$ whilst simultaneously
exploring its component structure \cite{Janson2009new}, which we now describe.
This algorithm traverses the graph by exploring all its edges, unlike
typical graph exploration algorithms, which sequentially explore vertices.
At each stage of the algorithm, every vertex in $[n]$ is in one of two possible
states, sleeping or awake, while each half-edge is in one of three states:
sleeping (unexplored), active or dead (removed). Write $\mathcal{A}_{\mathbb{%
V}}(j),\mathcal{S}_{\mathbb{V}}(j)$ for the set of active and sleeping
vertices at step $j$ and similarly let $\mathcal{S}_{\mathbb{E}}(j),\mathcal{%
A}_{\mathbb{E}}(j),\mathcal{D}_{\mathbb{E}}(j)$ be the set of sleeping,
active and dead half-edges at step $j$. We call a half-edge
\textquotedblleft living\textquotedblright\ if it is either sleeping or
active. Initialize by setting all vertices and half-edges to be in the
sleeping state. For step $j\geq 0$, write $A(j)\doteq |\mathcal{A}_{\mathbb{E%
}}(j)|$ for the number of active half-edges and $V_{k}(j)$ for the number of
sleeping vertices $v\in \mathcal{S}_{\mathbb{V}}(j)$ with degree $k$. Write $\boldsymbol{V}(j)\doteq(V_{k}(j), k\in \mathbb{N})$
for the corresponding vector in $\mathbb{R}_{+}^{\infty }$. 

At step $j=0$, all vertices and half-edges are asleep hence $A(0)=0$ and $%
V_k(0)=n_k$ for $k \ge 1$. The exploration process proceeds as follows:

\begin{enumerate}[\upshape(1)]

\item If the number of active half-edges and sleeping vertices is zero, i.e. $A(j) = 0$
and $\boldsymbol{V}(j)={\boldsymbol{0}}$, all vertices and half-edges have been explored
and we terminate the algorithm.

\item If $A(j) = 0$ and $\boldsymbol{V}(j) \neq {\boldsymbol{0}}$, so there exist
sleeping vertices, pick one such vertex with probability proportional to its
degree (alternatively pick a sleeping half-edge uniformly at random) and
mark the vertex as awake and all its half-edges as active. Thus the
transition $(A(j),\boldsymbol{V}(j))$ to $(A(j+1),\boldsymbol{V}(j+1))$ at step $j+1$ takes the form 
\begin{equation*}
(0, {\boldsymbol{v}}) \mapsto (k, {\boldsymbol{v}} - {\boldsymbol{e}}_k) %
\mbox{ with probability } \frac{kv_k}{\sum_{i=1}^\infty i v_i}, \: k \in 
\mathbb{N},
\end{equation*}
where ${\boldsymbol{e}}_k$ is the $k$-th unit vector.

\item If $A(j) > 0$, pick an active half-edge uniformly at random, pair it
with another uniformly chosen living half-edge (either active or sleeping),
say $e^*$, merge both half-edges to form a full edge and kill both
half-edges. If $e^*$ was sleeping when picked, wake the vertex corresponding
to the half-edge $e^*$, and mark all its other half-edges active. Thus in
this case the transition takes the form 
\begin{align*}
(a, {\boldsymbol{v}}) &\mapsto (a-2, {\boldsymbol{v}}) 
\mbox{ with
probability } \frac{a-1}{\sum_{i=1}^\infty i v_i + a-1}, \\
(a, {\boldsymbol{v}}) &\mapsto (a+k-2, {\boldsymbol{v}} - {\boldsymbol{e}}%
_k) \mbox{ with probability } \frac{kv_k}{\sum_{i=1}^\infty i v_i + a-1}, \:
k \in \mathbb{N}.
\end{align*}
\end{enumerate}

The statements in (2) and (3) can be combined as follows: If $A(j) \neq 0$
or $\boldsymbol{V}(j) \neq {\boldsymbol{0}}$, then the transition $(A(j),\boldsymbol{V}(j))$ to $%
(A(j+1),\boldsymbol{V}(j+1))$ takes the form 
\begin{align}  \label{eq:aj-dynamics}
\begin{aligned} (a, {\boldsymbol{v}}) &\mapsto (a-2, {\boldsymbol{v}})
\mbox{ with probability } \frac{(a-1)^+}{\sum_{i=1}^\infty i v_i + (a-1)^+},
\\ (a, {\boldsymbol{v}}) &\mapsto (a+k-2, {\boldsymbol{v}} -
{\boldsymbol{e}}_k) \mbox{ with probability } \frac{kv_k}{\sum_{i=1}^\infty
i v_i + (a-1)^+}, \: k \in \mathbb{N}. \end{aligned}
\end{align}

The random graph formed at the termination of the above algorithm, denoted
as $G([n],{\boldsymbol{d}}(n))$, has the same distribution as the
configuration model with degree sequence ${\boldsymbol{d}}(n)$ \cite%
{molloy1995critical,Janson2009new}.

We note that for $j> 0$, $A(j)=0$ if and only if the exploration of a
component in the random graph $G([n],{\boldsymbol{d}}(n))$ is completed at
step $j$. Thus the number of edges in a component equals the length of an
excursion of $\{A(j)\}$ away from $0$ and the largest excursion length gives
the size of the largest component, namely the number of edges in the
component with maximal number of edges. 
The vertices in each component are the vertices that are awakened during
corresponding excursions.

Note that at each step in the EEA, either a new vertex is woken up or two
half-edges are killed. Since there are a total of $n$ vertices and $2m$
half-edges, we have from Assumptions \ref{asp:convgN} and \ref%
{asp:exponential-boundN} that the algorithm terminates in at most $m+n\le n
L $ steps where $L \doteq 1 + \lfloor \sup_n \frac{1}{2}\sum_{k=1}^\infty k 
\frac{n_k}{n} \rfloor < \infty$. We define $A(j) \equiv 0$ and $\boldsymbol{V}(j)
\equiv {\boldsymbol{0}}$ for all $j \ge j_0$ where $j_0$ is the step at
which the algorithm terminates.

\subsection{An equivalent continuous time exploration process}

\label{sec:model} A natural way to study large deviation properties of the
configuration model is through the discrete time sequence $%
\{A(j),\boldsymbol{V}(j)\}_{j\in \mathbb{N}_0}$ in EEA which can be viewed as a
discrete time \textquotedblleft small noise" Markov process. In order to
study large deviations for such a sequence, a standard approach is to
consider the sequence of $\mathbb{C}([0,L]:\mathbb{R}^{\infty})$-valued
random variables obtained by a linear interpolation of $\{A(j),\boldsymbol{V}(j)\}_{j\in 
\mathbb{N}_0}$ over intervals of length $1/n$. As was noted in the
Introduction, the `diminishing rates' feature of the transition kernel %
\eqref{eq:aj-dynamics} makes the large deviations analysis of this sequence
challenging. Thus we introduce below a different continuous time process
associated with the exploration of the configuration model.

We begin by introducing some notation that will be needed to formulate the
continuous time model. For a locally compact Polish space $\mathbb{S}$, let $%
\mathcal{M}_{FC}(\mathbb{S})$ be the space of all measures $\nu$ on $(%
\mathbb{S},\mathcal{B}(\mathbb{S}))$ such that $\nu(K)<\infty$ for every
compact $K \subset \mathbb{S}$. We equip $\mathcal{M}_{FC}(\mathbb{S})$ with
the usual vague topology. This topology can be metrized such that $\mathcal{M%
}_{FC}(\mathbb{S})$ is a Polish space (see  \cite%
{BudhirajaDupuisMaroulas2011variational} for one convenient metric). A Poisson random measure (PRM) $%
N$ on a locally compact Polish space $\mathbb{S}$ with mean
measure (or intensity measure) $\nu \in \mathcal{M}_{FC}(\mathbb{S})$ is an $%
\mathcal{M}_{FC}(\mathbb{S})$-valued random variable such that for each $A
\in \mathcal{B}(\mathbb{S})$ with $\nu(A)<\infty$, $N(A)$ is
Poisson distributed with mean $\nu(A)$ and for disjoint $A_1,\dotsc,A_k \in 
\mathcal{B}(\mathbb{S})$, $N(A_1),\dotsc,N(A_k)$ are
mutually independent random variables (cf.\ \cite{IkedaWatanabe1990SDE}).

Let $(\Omega ,\mathcal{F},{P})$ be a complete probability space on which we are given
a collection of i.i.d.\ Poisson random measures $\{N%
_{k}(ds\,dy\,dz)\}_{k\in \mathbb{N}_{0}}$ on $\mathbb{R}_{+}\times \lbrack 0,1]\times 
\mathbb{R}_{+}$ with intensity measure $ds\times dy\times dz$. Define the
filtration 
\begin{equation*}
\hat{\mathcal{F}}_{t}\doteq \sigma \{N_{k}((0,s]\times A\times
B),0\leq s\leq t,A\in \mathcal{B}([0,1]),B\in \mathcal{B}(\mathbb{R}%
_{+}),k\in \mathbb{N}_{0}\},\;t\geq 0
\end{equation*}%
and let $\{\mathcal{F}_{t}\}$ be the ${P}$ completion of this
filtration. Fix $T\in (0,\infty )$. Let $\mathcal{\bar{P}}$ be the $\{%
\mathcal{F}_{t}\}_{0\leq t\leq T}$-predictable $\sigma $-field on $\Omega
\times \lbrack 0,T]$. Denote by $\bar{\mathcal{A}}_{+}$ the class of all $(%
\mathcal{\bar{P}}\otimes \mathcal{B}([0,1]))/\mathcal{B}(\mathbb{R}_{+})$%
-measurable maps from $\Omega \times \lbrack 0,T]\times \lbrack 0,1]$ to $%
\mathbb{R}_{+}$. For $\varphi \in \bar{\mathcal{A}}_{+}$, define a counting
process $N_{k}^{\varphi }$ on $[0,T]\times \lbrack 0,1]$ by 
\begin{equation*}
N_{k}^{\varphi }([0,t]\times A)\doteq \int_{\lbrack 0,t]\times A\times 
\mathbb{R}_{+}}{{1}}_{[0,\varphi (s,y)]}(z)\,N%
_{k}(ds\,dy\,dz),\:t\in \lbrack 0,T],A\in \mathcal{B}([0,1]),k\in \mathbb{N}%
_{0}.
\end{equation*}%
We think of $N_{k}^{\varphi }$ as a controlled random measure, where $%
\varphi $ is the control process that produces a thinning of the point
process $N_{k}$ in a random but non-anticipative manner to produce
a desired intensity. We will write $N_{k}^{\varphi }$ as $N_{k}^{\theta }$
if $\varphi \equiv \theta $ for some constant $\theta \in \mathbb{R}_{+}$.
Note that $N_{k}^{\theta }$ is a PRM  on $[0,T]\times [0,1]$ with intensity $\theta ds \times dy$.
For ${\boldsymbol{x}}=(x_{0},x_{1},x_{2},\dotsc )\in \mathbb{R}\times 
\mathbb{R}_{+}^{\infty }$, let 
\begin{equation}
r({\boldsymbol{x}})\doteq (x_{0})^{+}+\sum_{k=1}^{\infty }kx_{k},\quad r_{0}(%
{\boldsymbol{x}})\doteq \frac{(x_{0})^{+}}{r({\boldsymbol{x}})}{{1%
}}_{\{r({\boldsymbol{x}})>0\}},\quad r_{k}({\boldsymbol{x}})\doteq \frac{%
kx_{k}}{r({\boldsymbol{x}})}{{1}}_{\{r({\boldsymbol{x}}%
)>0\}},\quad k\in \mathbb{N}.  \label{eq:r_k}
\end{equation}%
Recall that ${\boldsymbol{e}}_{k}$ is the $k$-th unit vector in $\mathbb{R}%
^{\infty }$, $k\in \mathbb{N}_{0}$. Define the state process $%
\boldsymbol{X}^{n}(t)=(X_{0}^{n}(t),X_{1}^{n}(t),X_{2}^{n}(t),\dotsc )$ with values in $%
\mathbb{R}\times \mathbb{R}_{+}^{\infty }$ as the solution to the following
SDE: 
\begin{align*}
\boldsymbol{X}^{n}(t)=\boldsymbol{X}^{n}(0)& +\frac{1}{n}\int_{[0,t]\times \lbrack 0,1]}{{1}%
}_{\{X_{0}^{n}(s-)\geq 0\}}\left[ -2{\boldsymbol{e}}_{0}\right] {{%
1}}_{[0,r_{0}(\boldsymbol{X}^{n}(s-)))}(y)\,N_{0}^{n}(ds\,dy) \\
\quad & +\sum_{k=1}^{\infty }\frac{1}{n}\int_{[0,t]\times \lbrack 0,1]}{%
{1}}_{\{X_{0}^{n}(s-)\geq 0\}}\left[ (k-2){\boldsymbol{e}}_{0}-{%
\boldsymbol{e}}_{k}\right] {{1}}_{[0,r_{k}(\boldsymbol{X}^{n}(s-)))}(y)%
\,N_{k}^{n}(ds\,dy) \\
\quad & +\sum_{k=1}^{\infty }\frac{1}{n}\int_{[0,t]\times \lbrack 0,1]}{%
{1}}_{\{X_{0}^{n}(s-)<0\}}\left[ k{\boldsymbol{e}}_{0}-{%
\boldsymbol{e}}_{k}\right] {{1}}_{[0,r_{k}(\boldsymbol{X}^{n}(s-)))}(y)%
\,N_{k}^{n}(ds\,dy),
\end{align*}%
where $\boldsymbol{X}^{n}(0)\doteq \frac{1}{n}(-1,n_{1},n_{2},\dotsc )$. Note that the
existence and uniqueness of solutions to this SDE follows from
Assumption \ref{asp:exponential-boundN}. Here we have applied the usual
scaling to the state variable (scaled down by $\frac{1}{n}$) and time
variable (sped up by $n$). It is not difficult to see that $\frac{1}{n}%
(A(j)-1,V_{1}(j),V_{2}(j),\dotsc )$ in the discrete time EEA can be viewed
as the embedded Markov chain associated with $\boldsymbol{X}^{n}$.

We now rewrite the evolution of $\boldsymbol{X}^{n}$ as follows: 
\begin{align*}
\boldsymbol{X}^{n}(t)& =\boldsymbol{X}^{n}(0)+{\boldsymbol{e}}_{0}\sum_{k=0}^{\infty }\frac{(k-2)}{n}%
\int_{[0,t]\times \lbrack 0,1]}{{1}}_{[0,r_{k}(\boldsymbol{X}^{n}(s-)))}(y)%
\,N_{k}^{n}(ds\,dy) \\
& \quad -\sum_{k=1}^{\infty }{\boldsymbol{e}}_{k}\frac{1}{n}%
\int_{[0,t]\times \lbrack 0,1]}{{1}}_{[0,r_{k}(\boldsymbol{X}^{n}(s-)))}(y)%
\,N_{k}^{n}(ds\,dy) \\
& \quad +{\boldsymbol{e}}_{0}\sum_{k=0}^{\infty }\frac{2}{n}%
\int_{[0,t]\times \lbrack 0,1]}{{1}}_{\{X_{0}^{n}(s-)<0\}}{%
{1}}_{[0,r_{k}(\boldsymbol{X}^{n}(s-)))}(y)\,N_{k}^{n}(ds\,dy).
\end{align*}%
Here the first two integrands do not depend on the sign of $X_{0}^{n}$ and
are interpreted as the main contribution to the evolution. The last sum is a
`reflection' term in the ${\boldsymbol{e}}_{0}$ direction and makes a
contribution of $\frac{2}{n}{\boldsymbol{e}}_{0}$ only when $X_{0}^{n}(s-)<0$%
. For $t\geq 0$ define%
\begin{align}
Y^{n}(t)& \doteq X_{0}^{n}(0)+\sum_{k=0}^{\infty }\frac{k-2}{n}%
\int_{[0,t]\times \lbrack 0,1]}{{1}}_{[0,r_{k}(\boldsymbol{X}^{n}(s-)))}(y)%
\,N_{k}^{n}(ds\,dy),  \label{eq:Y_n} \\
\eta ^{n}(t)& \doteq \sum_{k=0}^{\infty }\frac{2}{n}\int_{[0,t]\times
\lbrack 0,1]}{{1}}_{\{X_{0}^{n}(s-)<0\}}{{1}}%
_{[0,r_{k}(\boldsymbol{X}^{n}(s-)))}(y)\,N_{k}^{n}(ds\,dy).  \label{eq:eta_n}
\end{align}%
Using these we can write 
\begin{align}
X_{0}^{n}(t)& =Y^{n}(t)+\eta ^{n}(t),  \label{eq:X_n_0} \\
X_{k}^{n}(t)& =X_{k}^{n}(0)-\frac{1}{n}\int_{[0,t]\times \lbrack 0,1]}{%
{1}}_{[0,r_{k}(\boldsymbol{X}^{n}(s-)))}(y)\,N_{k}^{n}(ds\,dy),\:k\in \mathbb{N}%
.  \label{eq:X_n_k}
\end{align}%
Here $\eta ^{n}$ is viewed as the regulator function which ensures that $%
X_{0}^{n}(t)\geq -\frac{1}{n}$. Note that $X_{k}^{n}(t)$ is non-increasing
and non-negative.

\subsection{Rate Function}

\label{sec:rate_def}

The main result of this work gives a large deviation principle for $\{(\boldsymbol{X}^n,
Y^n)\}_{n \in \mathbb{N}}$ in the path space $\mathcal{D}_\infty\times 
\mathcal{D}$. In this section we define the associated rate function.
Including the process $Y^n$ in the LDP is convenient for obtaining large
deviation results, for the degree distribution in giant components, of the
form given in Section \ref{sec:examples}. 

Recall the probability distribution ${\boldsymbol{p}}\doteq \{p_{k}\}_{k\in 
\mathbb{N}}$ introduced in Assumption \ref{asp:convgN}. Let $\Gamma \colon 
\mathcal{C}\rightarrow \mathcal{C}$ denote the one-dimensional Skorokhod map
defined by 
\begin{equation*}
\Gamma (\psi )(t)\doteq \psi (t)-\inf_{0\leq s\leq t}\psi (s)\wedge 0,\;t\in
\lbrack 0,T],\psi \in \mathcal{C}.
\end{equation*}%
%
%
%

Let $\mathcal{C}_T$ be the subset of $\mathcal{C}_\infty \times \mathcal{C}$%
, consisting of those functions $(\boldsymbol{\zeta},\psi)$ 
such that

\begin{enumerate}[\upshape(a)]

\item $\psi(0)=0$, and $\psi$ is absolutely continuous on $[0,T]$.

\item $\zeta_0(t) = \Gamma(\psi)(t)$ for $t \in [0,T]$.

\item For each $k \in \mathbb{N}$, $\zeta_k(0) = p_k$, $\zeta_k$ is
non-increasing and absolutely continuous and $\zeta_k(t) \ge 0$ for $t \in
[0,T]$.
\end{enumerate}

Now we define the rate function $I_T$.
For $(\boldsymbol{\zeta} ,\psi )\in (\mathcal{D}_{\infty }\times \mathcal{D})\setminus 
\mathcal{C}_{T}$, define $I_{T}(\boldsymbol{\zeta} ,\psi )\doteq \infty $. 
For $(\boldsymbol{\zeta}
,\psi )\in \mathcal{C}_{T}$, define 
\begin{equation}
I_{T}(\boldsymbol{\zeta} ,\psi )\doteq \inf_{\boldsymbol{\varphi} \in \mathcal{S}_{T}(\boldsymbol{\zeta} ,\psi
)}\left\{ \sum_{k=0}^{\infty }\int_{[0,T]\times \lbrack 0,1]}\ell (\varphi
_{k}(s,y))\,ds\,dy\right\} .  \label{eq:rate_function}
\end{equation}%
%
%
%
Here for $x\geq 0$, 
\begin{equation}
\ell (x)\doteq x\log x-x+1,  \label{eq:ell}
\end{equation}%
and the set $\mathcal{S}_{T}(\boldsymbol{\zeta} ,\psi )$ consists of all intensities $%
\boldsymbol{\varphi} =(\varphi _{k})_{k\in \mathbb{N}_{0}}$, $\varphi _{k}:[0,T]\times
\lbrack 0,1]\rightarrow \mathbb{R}_{+}$, such that 
for $t\in \lbrack 0,T]$, 
\begin{align}
\psi (t)& =\sum_{k=0}^{\infty }(k-2)\int_{[0,t]\times \lbrack 0,1]}{%
{1}}_{[0,r_{k}(\boldsymbol{\zeta} (s)))}(y)\,\varphi _{k}(s,y)ds\,dy
\label{eq:psi} \\
\zeta _{k}(t)& =p_{k}-\int_{[0,t]\times \lbrack 0,1]}{{1}}%
_{[0,r_{k}(\boldsymbol{\zeta} (s)))}(y)\,\varphi _{k}(s,y)ds\,dy,k\in \mathbb{N}.
\label{eq:phi_k}
\end{align}%
%
%
%

\begin{Remark}
\label{rmk:property_ODE} We record the following properties of a pair $(\boldsymbol{\zeta}
,\psi )\in \mathcal{C}_{T}$ that satisfies \eqref{eq:psi} and %
\eqref{eq:phi_k} for some $\boldsymbol{\varphi} \in \mathcal{S}_{T}(\boldsymbol{\zeta} ,\psi )$.

\begin{enumerate}[\upshape(a)]

\item From Assumptions \ref{asp:convgN} and \ref{asp:exponential-boundN} it follows that the
following uniform integrability holds: As $K \to \infty$, 
\begin{equation*}
\sup_{0 \le t \le T}\sum_{k=K}^\infty k \zeta_k(t) \le \sum_{k=K}^\infty k
\sup_{0 \le t \le T} \zeta_k(t) = \sum_{k=K}^\infty k p_k \to 0.
\end{equation*}
This in particular says that $r(\boldsymbol{\zeta}(\cdot)) \in \mathcal{C}$, where $%
r(\cdot)$ is defined in \eqref{eq:r_k}.

\item For any $k \in \mathbb{N}$, whenever $\zeta_k(t_k)=0$ for some $t_k \in
[0,T]$, we must have $\zeta_k(t) = 0$ for all $t \in [t_k,T]$. This is a
consequence of the fact that $\zeta_k$ is non-increasing and non-negative for
every $k$.

\item Whenever $r(\boldsymbol{\zeta}(t^*))=0$ for some $t^* \in [0,T]$, we must have from
part (b) that $\zeta_k(t) = 0$ for all $t \in [t^*,T]$ and $k \in \mathbb{N}$%
. This, together with \eqref{eq:psi}, implies that $\psi(\cdot)$ is
non-increasing on the interval $[t^*,T]$. Hence by property (b) of $\mathcal{%
C}_T$, $\zeta_0(t)$ is non-increasing and non-negative for $t \in [t^*,T]$.
Since $\zeta_0(t^*)=0$, we must then have $\zeta_0(t) = 0$ for $t \in [t^*,T]$%
, which means that $\boldsymbol{\zeta}(t) = \boldsymbol{0}$ for $t \in [t^*,T]$. Thus $%
\boldsymbol{\zeta}(t)= \boldsymbol{0}$ after the time instant 
\begin{equation}
\tau_{\boldsymbol{\zeta}} \doteq \inf \{t \in [0,T]:r(\boldsymbol{\zeta}(t))=0\} \wedge T.
\label{eq:defntauphi}
\end{equation}
\end{enumerate}
\end{Remark}

\subsection{Results}

\label{sec:main-result}

The following is the main result of this work.

\begin{Theorem}
\label{thm:main-ldp} The function $I_T$ in \eqref{eq:rate_function} is a
rate function on $\mathcal{D}_\infty \times \mathcal{D}$ and the sequence $%
\{(\boldsymbol{X}^n,Y^n)\}_{n \in \mathbb{N}}$ satisfies a large deviation principle in $%
\mathcal{D}_\infty \times \mathcal{D}$ with rate function $I_T$.
\end{Theorem}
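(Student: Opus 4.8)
The strategy is the standard weak-convergence approach to large deviations via the variational representation for functionals of Poisson random measures \cite{BudhirajaDupuisMaroulas2011variational}. Since $\mathcal{D}_\infty\times\mathcal{D}$ is Polish, it suffices to establish the equivalent \emph{Laplace principle}: for every $F\in\mathbb{C}_b(\mathcal{D}_\infty\times\mathcal{D})$,
\begin{equation*}
-\frac1n\log\mathbb{E}\left[e^{-nF(\boldsymbol{X}^n,Y^n)}\right]\longrightarrow\inf_{(\boldsymbol{\zeta},\psi)}\left\{F(\boldsymbol{\zeta},\psi)+I_T(\boldsymbol{\zeta},\psi)\right\},
\end{equation*}
and, separately, that $I_T$ has compact sublevel sets (so the Laplace principle upgrades to the LDP); the latter is carried out in Section~\ref{sec:rate_function}. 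Applying the representation to the Poisson random measures driving $\boldsymbol{X}^n$ expresses the left-hand side as
\begin{equation*}
\inf_{\boldsymbol{\varphi}^n=(\varphi_k^n)}\mathbb{E}\left[\sum_{k=0}^{\infty}\int_{[0,T]\times[0,1]}\ell(\varphi_k^n(s,y))\,ds\,dy+F(\bar{\boldsymbol{X}}^n,\bar{Y}^n)\right],
\end{equation*}
where $(\bar{\boldsymbol{X}}^n,\bar Y^n)$ solves the controlled analogue of \eqref{eq:X_n_0}--\eqref{eq:X_n_k} with each $N_k^n$ replaced by a thinned measure of intensity $n\varphi_k^n(s,y)\,ds\,dy$, and the infimum is over predictable $\boldsymbol{\varphi}^n$. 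The proof splits into a lower bound for the right-hand side (yielding the large deviation \emph{upper} bound, Section~\ref{sec:upper}) and an upper bound (yielding the large deviation \emph{lower} bound, Section~\ref{sec:lower}).

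For the large deviation upper bound I would fix $\epsilon>0$, take near-minimizing controls $\boldsymbol{\varphi}^n$, and pass to the limit along a subsequence. Superlinearity of $\ell$ together with the uniform cost bound makes the occupation measures of the controls tight, and jointly one obtains $\mathcal{C}$-tightness of $(\bar{\boldsymbol{X}}^n,\bar Y^n,\bar\eta^n)$ in $\mathcal{D}_\infty\times\mathcal{D}\times\mathcal{D}$; the tails of the infinitely many coordinates $\bar X_k^n$ are dominated uniformly using Assumption~\ref{asp:exponential-boundN} (cf.\ Remark~\ref{rmk:property_ODE}(a)), and the reflection term is controlled through continuity of the Skorokhod map $\Gamma$. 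Along a convergent subsequence with limit $(\boldsymbol{\zeta},\psi,\boldsymbol{\varphi})$ one identifies the limit: the delicate point is that the integrands $1_{[0,r_k(\bar{\boldsymbol{X}}^n(s-))]}(y)$ involve the maps $r_k$ of \eqref{eq:r_k}, which are continuous only on $\{r>0\}$, so one argues on $[0,\tau_{\boldsymbol{\zeta}})$, where $r(\boldsymbol{\zeta})>0$ and the passage to the limit is routine, and notes that on $[\tau_{\boldsymbol{\zeta}},T]$ all coordinates vanish by Remark~\ref{rmk:property_ODE}(c) so the remaining contribution is negligible. This produces $(\boldsymbol{\zeta},\psi)\in\mathcal{C}_T$ with $\boldsymbol{\varphi}\in\mathcal{S}_T(\boldsymbol{\zeta},\psi)$, and lower semicontinuity of the cost (Fatou, using convexity of $\ell$) gives $\liminf_n(\mathrm{cost}^n+F)\ge I_T(\boldsymbol{\zeta},\psi)+F(\boldsymbol{\zeta},\psi)\ge\inf\{I_T+F\}$.

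For the large deviation lower bound, fix $(\boldsymbol{\zeta},\psi)\in\mathcal{C}_T$ with $I_T(\boldsymbol{\zeta},\psi)<\infty$ and $\epsilon>0$, and pick $\boldsymbol{\varphi}\in\mathcal{S}_T(\boldsymbol{\zeta},\psi)$ with $\sum_k\int\ell(\varphi_k)\le I_T(\boldsymbol{\zeta},\psi)+\epsilon$. Using a deterministic control in the representation, tightness arguments as above show that every subsequential limit of $(\bar{\boldsymbol{X}}^n,\bar Y^n)$ solves the controlled deterministic system \eqref{eq:psi}--\eqref{eq:phi_k} coupled with $\zeta_0=\Gamma(\psi)$; but to conclude $(\bar{\boldsymbol{X}}^n,\bar Y^n)\to(\boldsymbol{\zeta},\psi)$ one needs that system to have a \emph{unique} solution, which can fail because $r_k$ is not Lipschitz and $\boldsymbol{\varphi}$ need not be bounded. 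This is precisely where Lemma~\ref{lem:uniqueness} is used: it lets $\boldsymbol{\varphi}$ be perturbed to a control $\boldsymbol{\varphi}^\delta$ whose associated trajectory $(\boldsymbol{\zeta}^\delta,\psi^\delta)$ is the unique solution of \eqref{eq:psi}--\eqref{eq:phi_k} (with $\zeta_0^\delta=\Gamma(\psi^\delta)$), lies within $\delta$ of $(\boldsymbol{\zeta},\psi)$, and has cost at most $\sum_k\int\ell(\varphi_k)+\epsilon$. Feeding $\boldsymbol{\varphi}^\delta$ into the representation and using uniqueness to pin down the weak limit yields $\limsup_n(-\frac1n\log\mathbb{E}[e^{-nF}])\le\sum_k\int\ell(\varphi_k^\delta)+F(\boldsymbol{\zeta}^\delta,\psi^\delta)$; letting $\delta\to0$, then $\epsilon\to0$, and taking the infimum over $(\boldsymbol{\zeta},\psi)$ completes the bound.

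The main obstacle is the uniqueness and perturbation statement, Lemma~\ref{lem:uniqueness}: well-posedness of the coupled controlled equations \eqref{eq:psi}--\eqref{eq:phi_k} must be obtained despite the non-Lipschitz coefficients $r_k$, the singular behaviour as $r(\boldsymbol{\zeta})\downarrow0$ near the ends of component excursions, and the Skorokhod reflection in the $\zeta_0$ coordinate, so a direct Gronwall argument is unavailable and the perturbation device described in the introduction is needed. A recurring secondary difficulty in both bounds is passing to the limit in integrands containing the discontinuous indicators $1_{[0,r_k(\cdot)]}(y)$, handled by splitting the time interval at $\tau_{\boldsymbol{\zeta}}$ and invoking Remark~\ref{rmk:property_ODE}. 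Finally, compactness of the sublevel sets $\{I_T\le M\}$ follows from the same tightness estimates applied to sequences of controls of uniformly bounded cost, again using Assumption~\ref{asp:exponential-boundN} for the uniform tail control needed for compactness in $\mathcal{D}_\infty$, together with the constraint equations \eqref{eq:psi}--\eqref{eq:phi_k} to identify limits in $\mathcal{C}_T$.
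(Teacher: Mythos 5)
Your outline matches the paper's strategy almost exactly: the variational representation of \cite{BudhirajaDupuisMaroulas2011variational}, tightness/characterization of controlled processes for the Laplace upper bound (Sections~\ref{sec:repnWCCP}--\ref{sec:upper}), the uniqueness lemma for the Laplace lower bound (Section~\ref{sec:lower}), and compactness of sublevel sets via the same tightness machinery (Section~\ref{sec:rate_function}). The one place your mental model diverges slightly from the paper is your reading of Lemma~\ref{lem:uniqueness}: you describe the perturbed control $\boldsymbol{\varphi}^\delta$ as producing a \emph{nearby} trajectory $(\boldsymbol{\zeta}^\delta,\psi^\delta)$, whereas the paper's construction (shrinking the support of $\varphi_k$ strictly inside $[0,r_k(\boldsymbol{\zeta}(t)))$ and leaving a buffer band where it vanishes) keeps the trajectory \emph{exactly} $(\boldsymbol{\zeta},\psi)$ while making it the unique solution; your variant would still close the argument by continuity of $h$ and letting $\delta\to0$, but it is slightly less economical than what the paper actually does.
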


\textbf{Outline of the proof:} Due to the equivalence between a large
deviation principle and a Laplace principle, it suffices to show the
following three statements (cf.\ \cite[Section 1.2]{DupuisEllis2011weak}).

\begin{enumerate}[\upshape(1)]

\item Laplace principle upper bound: For all $h\in \mathbb{C}_{b}(\mathcal{D}%
_{\infty }\times \mathcal{D})$,
\begin{equation}
\limsup_{n\rightarrow \infty} \frac{1}{n}\log {{E}}%
e^{-nh(\boldsymbol{X}^{n},Y^{n})}\le - \inf_{(\boldsymbol{\zeta} ,\psi )\in \mathcal{C}_{\infty }\times 
\mathcal{C}}\{I_{T}(\boldsymbol{\zeta} ,\psi )+h(\boldsymbol{\zeta} ,\psi )\}.  \label{eq:lappriupp}
\end{equation}

\item Laplace principle lower bound: For all $h \in \mathbb{C}_b(\mathcal{D}%
_\infty \times \mathcal{D})$, 
\begin{equation}  \label{eq:lapprilow}
\liminf_{n \to \infty} \frac{1}{n} \log {{E}} e^{-nh(\boldsymbol{X}^{n},Y^{n})} \ge -
\inf_{(\boldsymbol{\zeta} ,\psi ) \in \mathcal{C}_\infty\times \mathcal{C}}
\{I_T(\boldsymbol{\zeta} ,\psi )+h(\boldsymbol{\zeta} ,\psi )\}.
\end{equation}

\item $I_T$ is a rate function on $\mathcal{D}_\infty\times \mathcal{D}$:
For each $M \in [0,\infty)$, $\{(\boldsymbol{\zeta} ,\psi ) \in \mathcal{D}_\infty \times 
\mathcal{D} : I_T(\boldsymbol{\zeta} ,\psi ) \le M \}$ is a compact subset of $\mathcal{D}%
_\infty \times \mathcal{D}$.
\end{enumerate}

The first statement will be shown in Section \ref{sec:upper}, the second in
Section \ref{sec:lower} and the final statement in Section \ref{sec:rate_function}.


\section{Examples}

\label{sec:examples} In this section we discuss two examples of the use of the
large deviation principle. Proof details of the first results are provided whereas the proof of the second are given in a
forthcoming paper. 

\subsection{Law of large number limits}

\label{sec:LLN} The large deviation principle in Theorem \ref{thm:main-ldp}
can be used to identify the law of large numbers limit $(\boldsymbol{\zeta} ,\psi )$ of
the exploration process $(\boldsymbol{X}^{n},Y^{n})$, which corresponds to the \textit{unique}
pair satisfying $I_{T}(\boldsymbol{\zeta} ,\psi )=0$. In particular we recover well known
results for the asymptotics of the largest component in the configuration
model \cite{Janson2009new}. To obtain this result we assume the following
strengthened version of Assumption \ref{asp:exponential-boundN}.

\begin{Assumption}
\label{asp:exponential-boundN-S} 
$\sup_{n \in \mathbb{N}} \sum_{k=1}^{\infty} \frac{n_k}{n}k^{2} < \infty$. 
\end{Assumption}

Recall $\mu \doteq \sum_{k=1}^{\infty }kp_{k}$ and note that $\mu <\infty $.
 Define, for $z\in \lbrack 0,1]$, 
\begin{equation*}
G_{0}(z)\doteq \sum_{k=1}^{\infty }p_{k}z^{k}\;\;\mbox{ and }%
\;\;G_{1}(z)\doteq \sum_{k=1}^{\infty }\frac{kp_{k}}{\mu }z^{k-1}.
\end{equation*}%
Define $F_{s}(t)\doteq G_{0}(s)-G_{0}(st)$ for $s\in (0,1]$ and $t\in
\lbrack 0,1]$. Then $F_{s}\colon \lbrack 0,1]\rightarrow \lbrack 0,G_{0}(s)]$
is strictly decreasing and continuous. Let $F_{s}^{-1}(\cdot )$ denote the
inverse of $F_{s}$. Define 
\begin{equation*}
f_{s}(t)\doteq \left\{ 
\begin{array}{ll}
F_{s}^{-1}(t) & \mbox{ when }0\leq t\leq G_{0}(s), \\ 
0 & \mbox{ when }t>G_{0}(s).
\end{array}%
\right.
\end{equation*}%
Then $f_{s}(t)$ is strictly decreasing until it hits zero. Note that in
particular, $f_{1}(t)=F_{1}^{-1}(t){{1}}_{[0,1]}(t)$. Define $%
f_{0}(t)\doteq 0$ for $t\geq 0$.

Fix $T\geq \frac{\mu }{2}$. The following theorem together with Proposition \ref{prop:uniqueness_LLN}
characterizes the unique $(\boldsymbol{\zeta} ,\psi )\in \mathcal{C}_{T}$ that minimizes
the rate function $I_{T}(\boldsymbol{\zeta} ,\psi )$. Letting 
\begin{equation*}
\nu \doteq \frac{\sum_{k=1}^{\infty }k(k-1)p_{k}}{\sum_{k=1}^{\infty }kp_{k}}%
,
\end{equation*}%
part 1 of the theorem considers the subcritical and critical cases $\nu \leq 1$, where the size of the largest component is $o(n)$, while part 2 considers the supercritical case $\nu >1$, where the size of the largest component is $O(n)$.

\begin{Theorem}
\phantomsection
\label{thm:LLN} 
Suppose that Assumptions \ref{asp:convgN} and \ref{asp:exponential-boundN-S}
hold.

\begin{enumerate}[\upshape(1)]

\item Suppose $\sum_{k=1}^\infty k(k-2)p_k \le 0$. Define $\boldsymbol{\zeta}(t) =
(\zeta_k(t))_{k \in \mathbb{N}_0}$ and $\psi(t)$ by 
\begin{align*}
\zeta_0(t) & \doteq 0, \zeta_k(t) \doteq p_k(f_1(t))^k, k \in \mathbb{N}, \\
\psi(t) & \doteq -2 \int_0^t r_0(\boldsymbol{\zeta}(s))\,ds + \sum_{k=1}^\infty (k-2) (p_k
- \zeta_k(t)).
\end{align*}
Then $(\boldsymbol{\zeta} ,\psi ) \in \mathcal{C}_T$ and $I_T(\boldsymbol{\zeta} ,\psi )=0$.

\item Suppose $\sum_{k=1}^\infty k(k-2)p_k > 0$. If $p_1 > 0$, then there
exists a unique $\rho \in (0,1)$ such that $G_1(\rho) = \rho$. If $p_1 = 0$, 
$G_1(\rho) = \rho$ with $\rho\doteq 0$. Define $\tau = \frac{\mu}{2}%
(1-\rho^2)>0$ and define $\boldsymbol{\zeta}(t) = (\zeta_k(t))_{k \in \mathbb{N}_0}$ and $%
\psi(t)$ by 
\begin{align*}
\zeta_0(t) & \doteq \left[ \mu-2t -\mu \sqrt{1-2t/\mu}G_1 ( \sqrt{1-2t/\mu}) %
\right] {{1}}_{[0,\tau]}(t), \\
\zeta_k(t) & \doteq \left\{ 
\begin{array}{ll}
p_k(1-{2t}/{\mu})^{k/2} & \mbox{ when } 0 \le t \le \tau, \\ 
p_k \rho^k (f_\rho(t - \tau))^k & \mbox{ when } t > \tau,
\end{array}
\right. k \in \mathbb{N}, \\
\psi(t) & \doteq -2 \int_0^t r_0(\boldsymbol{\zeta}(s))\,ds + \sum_{k=1}^\infty (k-2) (p_k
- \zeta_k(t)).
\end{align*}
Then $(\boldsymbol{\zeta} ,\psi ) \in \mathcal{C}_T$ and $I_T(\boldsymbol{\zeta} ,\psi )=0$.
\end{enumerate}
\end{Theorem}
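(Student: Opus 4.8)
We must verify that the explicit pairs $(\boldsymbol{\zeta},\psi)$ given in parts (1) and (2) of Theorem~\ref{thm:LLN} lie in $\mathcal{C}_T$ and satisfy $I_T(\boldsymbol{\zeta},\psi)=0$.

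The plan is to reduce both parts to showing that the constant control $\varphi_k\equiv1$, $k\in\mathbb{N}_0$, witnesses zero cost. Since $\ell(x)=x\log x-x+1\ge0$ with $\ell(1)=0$, and since $I_T(\boldsymbol{\zeta},\psi)=\infty$ when $(\boldsymbol{\zeta},\psi)\notin\mathcal{C}_T$, it suffices to verify that the explicit pairs lie in $\mathcal{C}_T$ and that $\boldsymbol{\varphi}\equiv\boldsymbol{1}\in\mathcal{S}_T(\boldsymbol{\zeta},\psi)$; then the infimum in \eqref{eq:rate_function} is at most $\sum_k\int_{[0,T]\times[0,1]}\ell(1)\,ds\,dy=0$, and $I_T\ge0$ forces $I_T(\boldsymbol{\zeta},\psi)=0$. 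Since $0\le r_k\le1$ by \eqref{eq:r_k}, one has $\int_0^1\mathbf{1}_{[0,r_k(\boldsymbol{\zeta}(s)))}(y)\,dy=r_k(\boldsymbol{\zeta}(s))$, so with $\varphi_k\equiv1$ the defining relations \eqref{eq:psi}--\eqref{eq:phi_k} of $\mathcal{S}_T(\boldsymbol{\zeta},\psi)$ become
\begin{align*}
\dot\zeta_k(t)&=-r_k(\boldsymbol{\zeta}(t)),\quad \zeta_k(0)=p_k\ \ (k\in\mathbb{N}),\\
\psi(t)&=\sum_{k=0}^{\infty}(k-2)\int_0^t r_k(\boldsymbol{\zeta}(s))\,ds.
\end{align*}
By \eqref{eq:phi_k}, the second display is exactly the formula for $\psi$ in the statement, so the real work is to check the ODEs for the explicit $\boldsymbol{\zeta}$, the Skorokhod identity $\zeta_0=\Gamma(\psi)$ built into $\mathcal{C}_T$, and the remaining (routine) conditions defining $\mathcal{C}_T$. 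Everything rests on closed forms for $r(\boldsymbol{\zeta}(\cdot))$ and $r_k(\boldsymbol{\zeta}(\cdot))$; the relevant power series, e.g.\ $\sum_k kp_k z^{k-1}=\mu G_1(z)$ and $\sum_k k^2p_k z^{k-1}$, and their term-by-term differentiation on $[0,1]$ are legitimate under Assumption~\ref{asp:exponential-boundN-S}.

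For part (1): $F_1(z)=1-G_0(z)$ maps $[0,1]$ strictly decreasingly onto $[0,1]$, so $f_1=F_1^{-1}$ has $f_1(0)=1$, is non-increasing, reaches $0$ at $t=1$, satisfies $\dot f_1=-1/(\mu G_1(f_1))$ by the inverse function theorem (as $F_1'=-\mu G_1$), and is absolutely continuous because $\int_0^1|\dot f_1|\,dt=\int_0^1 dz=1$ after the substitution $z=f_1(t)$. With $\zeta_0\equiv0$ and $\zeta_k=p_k f_1^{\,k}$ one gets $r(\boldsymbol{\zeta}(t))=\sum_k kp_k f_1(t)^k=\mu f_1(t)G_1(f_1(t))$ and $r_k(\boldsymbol{\zeta})=kp_k f_1^{\,k-1}/(\mu G_1(f_1))$, hence $\dot\zeta_k=kp_k f_1^{\,k-1}\dot f_1=-r_k(\boldsymbol{\zeta})$; on $\{f_1=0\}$ this is trivial since there $r(\boldsymbol{\zeta})=0$ and so $r_k(\boldsymbol{\zeta})=0$. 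As $\zeta_0\equiv0$ and $\psi(0)=0$, the constraint $\zeta_0=\Gamma(\psi)$ is equivalent to $\psi$ being non-increasing on $[0,T]$; and $\dot\psi(t)=\sum_{k\ge0}(k-2)r_k(\boldsymbol{\zeta}(t))=\frac{1}{\mu G_1(z)}\sum_{k\ge1}k(k-2)p_k z^{k-1}$ with $z=f_1(t)\in[0,1]$, where the $k=1$ summand is the constant $-p_1$, the $k=2$ summand vanishes, and each $k\ge3$ summand is non-negative and non-decreasing in $z$, so $\sum_{k\ge1}k(k-2)p_k z^{k-1}\le\sum_{k\ge1}k(k-2)p_k\le0$ by the hypothesis of part (1). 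The remaining $\mathcal{C}_T$ conditions ($\psi(0)=0$; $\zeta_k(0)=p_k$, $\zeta_k$ non-increasing and non-negative; absolute continuity of $\psi$ and each $\zeta_k$) follow directly from the formulas.

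For part (2): I would first prove the stated facts about $\rho$. Since $\sum_k k(k-2)p_k>0$ forces $p_k>0$ for some $k\ge3$, $G_1$ is strictly convex on $[0,1]$; thus $G_1-\mathrm{id}$ is strictly convex, vanishes at $z=1$ with slope $\nu-1>0$, and equals $p_1/\mu\ge0$ at $z=0$, which gives a unique root $\rho\in(0,1)$ when $p_1>0$ (and $\rho=0$ when $p_1=0$), at which moreover $G_1'(\rho)\le1$. On $[0,\tau]$ put $w(t)=\sqrt{1-2t/\mu}$, so $w(0)=1$, $w(\tau)=\rho$, $\dot w=-1/(\mu w)$; then $r(\boldsymbol{\zeta}(t))=\zeta_0(t)+\sum_k kp_k w^k=\mu w(w-G_1(w))+\mu w G_1(w)=\mu w^2=\mu-2t$, so $r_k(\boldsymbol{\zeta})=kp_k w^{k-2}/\mu$ and $r_0(\boldsymbol{\zeta})=1-G_1(w)/w$, and the ODEs follow by the chain rule. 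One verifies $\psi=\zeta_0$ on $[0,\tau]$ by matching the value at $t=0$ (both $0$) and the derivative (both equal $-2+G_1(w)/w+G_1'(w)$); convexity of $G_1$ gives $G_1(w)\le w$ on $[\rho,1]$, so $\zeta_0\ge0$ on $[0,\tau]$ with $\zeta_0(0)=\zeta_0(\tau)=0$, hence $\Gamma(\psi)=\psi=\zeta_0$ there. On $(\tau,T]$, where $\zeta_0\equiv0$, put $g(t)=\rho f_\rho(t-\tau)$; from $F_\rho(u)=G_0(\rho)-G_0(\rho u)$ and $F_\rho'(u)=-\rho\mu G_1(\rho u)$ one gets $\dot g=-1/(\mu G_1(g))$, while $r(\boldsymbol{\zeta})=\mu g G_1(g)$ and $r_k(\boldsymbol{\zeta})=kp_k g^{k-1}/(\mu G_1(g))$, so the ODEs again hold. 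Since $\psi$ attains its running minimum value $0$ at $\tau$, the identity $\zeta_0=\Gamma(\psi)$ on $[\tau,T]$ holds once $\psi$ is shown non-increasing on $[\tau,T]$; here $\dot\psi(t)=\sum_{k\ge1}(k-2)r_k(\boldsymbol{\zeta}(t))=gG_1'(g)/G_1(g)-1$, and $gG_1'(g)\le G_1(g)$ for $g\in[0,\rho]$ because $\phi(g)\doteq G_1(g)-gG_1'(g)$ satisfies $\phi'(g)=-gG_1''(g)\le0$ and $\phi(\rho)=\rho(1-G_1'(\rho))\ge0$. The degenerate case $p_1=0$ (then $\rho=0$, $\tau=\mu/2$, $\boldsymbol{\zeta}\equiv\boldsymbol{0}$ and $\psi\equiv0$ on $[\mu/2,T]$) is checked directly, and the remaining $\mathcal{C}_T$ conditions — including absolute continuity near the times where $w$ or $g$ vanishes, the only possible singularity being in $r_1$, which is absent when $p_1=0$ and bounded when $p_1>0$ since then $w,g\ge\rho>0$ on their respective phases — are routine.

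The step I expect to be the main obstacle is the Skorokhod identity $\zeta_0=\Gamma(\psi)$: this is the only place where the subcritical/critical versus supercritical dichotomy genuinely enters, and it reduces to a sign analysis of $\dot\psi$ resting on convexity and monotonicity of $G_0,G_1$ — that $z\mapsto\sum_{k\ge1}k(k-2)p_k z^{k-1}$ is non-decreasing on $[0,1]$, hence bounded above by its value $\sum_k k(k-2)p_k\le0$ at $z=1$, for part (1); and that $\phi(g)=G_1(g)-gG_1'(g)$ is non-increasing with $\phi(\rho)\ge0$ for part (2), the latter using exactly that $\rho$ is the smallest solution of $G_1(\rho)=\rho$, so $G_1'(\rho)\le1$. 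Everything else is a direct computation with the closed forms, licensed by Assumption~\ref{asp:exponential-boundN-S}.
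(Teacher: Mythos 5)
Your proposal is correct, and the overall strategy is the same as the paper's: exhibit the constant control $\boldsymbol{\varphi}\equiv\boldsymbol{1}$, check that it lies in $\mathcal{S}_T(\boldsymbol{\zeta},\psi)$ by verifying the ODEs $\dot\zeta_k=-r_k(\boldsymbol{\zeta})$, and confirm the Skorokhod identity $\zeta_0=\Gamma(\psi)$ by a sign analysis of $\dot\psi$. Part (1) matches the paper's proof almost verbatim (the paper phrases the monotonicity bound as $\sum_k k(k-2)p_k z^k\le z\sum_k k(k-2)p_k$; yours phrases it via term-by-term monotonicity of $z\mapsto k(k-2)p_k z^{k-1}$; these are the same observation). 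Where you genuinely diverge is in part (2) on the phase $t>\tau$ and in the treatment of $\rho$. The paper cites Molloy--Reed for existence and uniqueness of $\rho$, whereas you prove it from strict convexity of $G_1-\mathrm{id}$ (and as a byproduct get $G_1'(\rho)\le 1$ for free). On $(\tau,T]$, the paper proves the algebraic inequality $\sum_k k(k-2)p_k\rho^k\le 0$ by expanding $\mu(G_1(\rho)-\rho)/(\rho-1)$, interprets this as saying $\boldsymbol{\zeta}(\tau)$ is subcritical, and then ``restarts'' by invoking part (1) with $p_k$ replaced by $\zeta_k(\tau)=p_k\rho^k$; you instead redo the ODE computation directly in the composite variable $g(t)=\rho f_\rho(t-\tau)$ and check $\dot\psi\le 0$ by monotonicity of $\phi(g)=G_1(g)-gG_1'(g)$ with $\phi(\rho)=\rho(1-G_1'(\rho))\ge 0$. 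These two routes are equivalent (indeed $\sum_k k(k-2)p_k\rho^{k-1}=\mu\rho(G_1'(\rho)-1)$), but your version is more self-contained and makes the underlying mechanism --- convexity of $G_1$ --- explicit, at the cost of repeating the ODE verification; the paper's restart is shorter given part (1) already established. Your remark that absolute continuity near the zeros of $w$ and $g$ is controlled because the only possibly singular term is $r_1$ is correct and is the right thing to observe under Assumption~\ref{asp:exponential-boundN-S}.
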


\begin{proof}	
	(1)
	Assume without loss of generality that $T\ge 1$. Since $f_1(t) \le 1$, we see from Assumption \ref{asp:exponential-boundN-S} that $r(\boldsymbol{\zeta}(\cdot))$ with $r$ from (\ref{eq:r_k}) and $\psi$ are well-defined.	
	Let $\varphi_k(s,y) = 1$ for all $k \in \mathbb{N}_0$ and $(s,y) \in [0,T] \times [0,1]$.
	It suffices to show $\boldsymbol{\varphi} \in \mathcal{S}_T(\boldsymbol{\zeta} ,\psi )$ and $(\boldsymbol{\zeta} ,\psi ) \in \mathcal{C}_T$.
	Since $f_1(t)=F^{-1}_1(t) {{1}}_{[0,1]}(t)$, we have $\tau_{\boldsymbol{\zeta}}=1$, where $\tau_{\boldsymbol{\zeta}}$ was defined in \eqref{eq:defntauphi}.
	Since $F_1(f_1(t))=t$ for $t\in [0,1]$,  we see that
	\begin{equation*}
		f'_1(t) = - \frac{1}{\sum_{k=1}^\infty k p_k (f_1(t))^{k-1}} \mbox{ for } 0 < t < \tau_{\boldsymbol{\zeta}} \mbox{ and } f'_1(t)=0 \mbox{ for } \tau_{\boldsymbol{\zeta}} < t <T.
	\end{equation*} 
	Using this it follows that for $k \in \mathbb{N}$,
	\begin{equation*}
		\zeta'_k(t) = - \frac{k \zeta_k(t)}{\sum_{j=1}^\infty j\zeta_j(t)} = - r_k(\boldsymbol{\zeta}(t)) \mbox{ for } 0 < t < \tau_{\boldsymbol{\zeta}} \mbox{ and } \zeta'_k(t)=0 \mbox{ for } \tau_{\boldsymbol{\zeta}} < t <T.
	\end{equation*}
	From this we see that \eqref{eq:phi_k} holds and we can write
	\begin{equation*}
		\psi(t) = \sum_{k=0}^\infty (k-2) \int_0^t r_k(\boldsymbol{\zeta}(s))\,ds.
	\end{equation*}
	This gives \eqref{eq:psi} and verifies that $\boldsymbol{\varphi} \in \mathcal{S}_T(\boldsymbol{\zeta} ,\psi )$.
	
	Next we argue that $(\boldsymbol{\zeta} ,\psi ) \in \mathcal{C}_T$.
	From Assumption \ref{asp:exponential-boundN-S},  for $t < \tau_{\boldsymbol{\zeta}}$, as $K \to \infty$,
	\begin{equation*}
		\sum_{k=K}^\infty |k-2| |\zeta'_k(t)| \le \sum_{k=K}^\infty k r_k(\boldsymbol{\zeta}(t)) \le \frac{\sum_{k=K}^\infty k^2 p_k}{r(\boldsymbol{\zeta}(t))} \to 0. 
	\end{equation*}
	This in particular says that $\psi$ is absolutely continuous and gives property (a) of $\mathcal{C}_T$.
	Furthermore, for $t < \tau_{\boldsymbol{\zeta}}$,
	\begin{equation*}
		\psi'(t) = \sum_{k=1}^\infty (k-2) r_k(\boldsymbol{\zeta}(t)) = \frac{\sum_{k=1}^\infty k(k-2)p_k (f_1(t))^k}{r(\boldsymbol{\zeta}(t))} \le \frac{f_1(t) \sum_{k=1}^\infty k(k-2)p_k}{r(\boldsymbol{\zeta}(t))} \le 0.
	\end{equation*}	
	Therefore $\Gamma(\psi)(t)=0=\zeta_0(t)$ for $t < \tau_{\boldsymbol{\zeta}}$.
	For $\tau_{\boldsymbol{\zeta}} \le t \le T$, clearly $\Gamma(\psi)(t)=0=\zeta_0(t)$.
	So we have checked property (b) of $\mathcal{C}_T$.
	Property (c) of $\mathcal{C}_T$ follows from the definition of $\zeta_k$, $k \in \mathbb{N}$.
	Therefore $(\boldsymbol{\zeta} ,\psi ) \in \mathcal{C}_T$ and part (1) follows.

%
%

	(2) The fact that when $p_1>0$ there is a unique $\rho \in (0,1)$ such that $G_1(\rho)=\rho$ is proved in \cite{molloy1995critical}.
	Since $f_\rho(t) \le 1$, we see from Assumption \ref{asp:exponential-boundN-S} that $r(\boldsymbol{\zeta}(\cdot))$ and $\psi$ are well-defined.
	Let $\varphi_k(s,y) = 1$ for all $k \in \mathbb{N}_0$ and $(s,y) \in [0,T] \times [0,1]$.
	Once again, it suffices to show $\boldsymbol{\varphi} \in \mathcal{S}_T(\boldsymbol{\zeta} ,\psi )$ and $(\boldsymbol{\zeta} ,\psi ) \in \mathcal{C}_T$.
	We first consider times $t < \tau$.
	Using the definitions of $r$, $G_1$ and $\tau$, for $t < \tau$
	\begin{equation*}
		r(\boldsymbol{\zeta}(t)) = \mu-2t -\mu \sqrt{1-2t/\mu}G_1 ( \sqrt{1-2t/\mu}) + \sum_{k=1}^\infty k p_k (1 - 2t/\mu)^{k/2} = \mu-2t > \mu \rho^2 \ge 0.
	\end{equation*}
	From this one can verify that for $t < \tau$, 
	\begin{align*}
		\zeta'_k(t) & = - \frac{k \zeta_k(t)}{\mu-2t} = -r_k(\boldsymbol{\zeta}(t)).
	\end{align*}
	Using this we see that \eqref{eq:phi_k} holds for $t < \tau$ and hence as before \eqref{eq:psi} holds as well.
	To show that $(\boldsymbol{\zeta} ,\psi ) \in \mathcal{C}_t$ for $t<\tau$, it suffices to show that $\psi(t)$ is absolutely continuous and $\zeta_0(t) = \psi(t)$ for $t \in [0,\tau)$.
	Note that for $t < \tau$, $\sum_{k=1}^\infty |k-2| |r_k(\boldsymbol{\zeta}(t))| \le \frac{\sum_{k=1}^\infty k^2 p_k}{\mu-2t}$.
	So we have from Assumption \ref{asp:exponential-boundN-S}  that $\psi$ is absolutely continuous over $[0,\tau]$.
	Also, one can verify that for $t < \tau$,
	\begin{equation*}
		\zeta'_0(t) = \frac{d}{dt} r(\boldsymbol{\zeta}(t)) - \sum_{k=1}^\infty k \zeta'_k(t) = - 2 + \sum_{k=1}^\infty k r_k(\boldsymbol{\zeta}(t)) = \psi'(t).
	\end{equation*}
	So $\zeta_0(t) = \psi(t)$ for $t < \tau$.
	Therefore we have shown that $\boldsymbol{\varphi} \in \mathcal{S}_t(\boldsymbol{\zeta} ,\psi )$ and $(\boldsymbol{\zeta} ,\psi ) \in \mathcal{C}_t$ for each $t<\tau$.
	
 
	We now consider $t \in [\tau, \tau_{\boldsymbol{\zeta}}]$. 
	Since $\rho \in [0,1)$ and $G_1(\rho)=\rho$, we have
	\begin{align*}
		0 & = \frac{\mu(G_1(\rho)-\rho)}{\rho-1} = \frac{1}{\rho-1} \sum_{k=1}^\infty kp_k (\rho^{k-1}-\rho) = -p_1 + \rho \sum_{k=3}^\infty kp_k \frac{\rho^{k-2}-1}{\rho-1} \\
		& = -p_1 + \rho \sum_{k=3}^\infty kp_k (\rho^{k-3}+\rho^{k-4}+\dotsb+1) \\
		& \ge -p_1 + \rho \sum_{k=3}^\infty kp_k (k-2)\rho^{k-3} \\
		& \ge \sum_{k=1}^\infty k(k-2)p_k\rho^{k-1}
	\end{align*}
	and therefore $0 \ge \sum_{k=1}^\infty k(k-2)p_k\rho^k = \sum_{k=1}^\infty k(k-2) \zeta_k(\tau)$.
	Namely, the assumption in part (1) is satisfied with ${\boldsymbol{p}}$ replaced by $\boldsymbol{\zeta}(\tau)$.
	Thus the proof for the case $t \in [\tau, \tau_{\boldsymbol{\zeta}}]$ is very similar to that in part (1), with $f_1(t)$ replaced by $f_{\rho}(t-\tau)$ and $p_k$ replaced with $\zeta_k(\tau)$, and we would like to omit the detail.
	This completes the proof of (2).
\end{proof}

The following proposition says that there is a unique $(\boldsymbol{\zeta} ,\psi )$ satisfying $I_T(\boldsymbol{\zeta} ,\psi ) = 0$, so that this pair is the law of large numbers limit.

\begin{Proposition}
\label{prop:uniqueness_LLN} Suppose Assumptions \ref{asp:convgN} and \ref%
{asp:exponential-boundN-S} hold. Then the pair $(\boldsymbol{\zeta} ,\psi )$ defined in
Theorem \ref{thm:LLN} is the unique element of $\mathcal{D}_\infty\times 
\mathcal{D}$ such that $I_T(\boldsymbol{\zeta} ,\psi ) = 0$.
\end{Proposition}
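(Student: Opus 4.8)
The plan is to show that the pair $(\boldsymbol{\zeta}^\star,\psi^\star)$ constructed in Theorem \ref{thm:LLN} is the \emph{only} element of $\mathcal{C}_T$ with zero rate (off $\mathcal{C}_T$ one has $I_T=\infty$, and Theorem \ref{thm:LLN} already supplies one such pair, since its proof exhibits $\boldsymbol{\varphi}\equiv 1$ as an admissible control of cost $0$). The argument has two stages: first, that $I_T(\boldsymbol{\zeta},\psi)=0$ forces $(\boldsymbol{\zeta},\psi)$ to solve the ``$\boldsymbol{\varphi}\equiv 1$'' deterministic system; second, that this system is uniquely solvable.

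\emph{Stage 1.} Fix $(\boldsymbol{\zeta},\psi)\in\mathcal{C}_T$ with $I_T(\boldsymbol{\zeta},\psi)=0$ and take $\boldsymbol{\varphi}^m\in\mathcal{S}_T(\boldsymbol{\zeta},\psi)$ with cost tending to $0$. I would first observe that for a \emph{fixed} $(\boldsymbol{\zeta},\psi)$ the ``fluxes'' $F_k(s)\doteq\int_0^1 1_{[0,r_k(\boldsymbol{\zeta}(s)))}(y)\,\varphi_k(s,y)\,dy$ are the \emph{same} for every $\boldsymbol{\varphi}\in\mathcal{S}_T(\boldsymbol{\zeta},\psi)$: for $k\ge 1$, differentiating \eqref{eq:phi_k} gives $F_k(s)=-\zeta_k'(s)$ a.e.; for $k=0$, differentiating \eqref{eq:psi} after substituting the now-determined $k\ge 1$ fluxes gives a quantity depending only on $\psi$ and $(\zeta_k)_{k\ge 1}$ (the absolute continuity of the series $\sum_{k\ge1}(k-2)\zeta_k$ being guaranteed by Remark \ref{rmk:property_ODE}(a) and Assumption \ref{asp:exponential-boundN-S}). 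Dropping the nonnegative contribution of $\{y\ge r_k(\boldsymbol{\zeta}(s))\}$ and applying Jensen's inequality to the convex $\ell$ over $y\in[0,r_k(\boldsymbol{\zeta}(s)))$ then bounds the cost of every admissible $\boldsymbol{\varphi}$ below by the fixed number $\sum_{k\ge 0}\int_{\{r_k(\boldsymbol{\zeta}(s))>0\}}r_k(\boldsymbol{\zeta}(s))\,\ell\!\big(F_k(s)/r_k(\boldsymbol{\zeta}(s))\big)\,ds$. Since this equals $I_T(\boldsymbol{\zeta},\psi)=0$ and each summand is $\ge 0$, every summand vanishes, forcing $F_k(s)=r_k(\boldsymbol{\zeta}(s))$ for a.e.\ $s$ and all $k$; plugging back into \eqref{eq:psi}--\eqref{eq:phi_k} and using $\zeta_0=\Gamma(\psi)$ shows $(\boldsymbol{\zeta},\psi)$ solves
\begin{equation*}
\zeta_k(t)=p_k-\int_0^t r_k(\boldsymbol{\zeta}(s))\,ds\ \ (k\ge 1),\qquad \psi(t)=\sum_{k=0}^\infty(k-2)\int_0^t r_k(\boldsymbol{\zeta}(s))\,ds,\qquad \zeta_0=\Gamma(\psi).
\end{equation*}

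\emph{Stage 2.} On $[0,\tau_{\boldsymbol{\zeta}})$, where $r(\boldsymbol{\zeta}(\cdot))$ is continuous and strictly positive (Remark \ref{rmk:property_ODE}(a)), the equation $\zeta_k'=-k\zeta_k/r(\boldsymbol{\zeta})$ is linear, so $\zeta_k(t)=p_k\lambda(t)^k$ with $\lambda(t)\doteq\exp(-\int_0^t r(\boldsymbol{\zeta}(s))^{-1}\,ds)\in(0,1]$; past $\tau_{\boldsymbol{\zeta}}$ one has $\boldsymbol{\zeta}\equiv\boldsymbol{0}$ with $\psi$ constant by Remark \ref{rmk:property_ODE}(c). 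Then $\sum_{k\ge1}k\zeta_k(t)=\mu\lambda(t)G_1(\lambda(t))$, and differentiating $\psi$ while using $\sum_{k\ge0}r_k(\boldsymbol{\zeta})=1$ on $[0,\tau_{\boldsymbol{\zeta}})$ gives $\psi(t)=\mu-2t-\mu\lambda(t)G_1(\lambda(t))$, so that, writing $m(t)\doteq-\inf_{0\le s\le t}(\psi(s)\wedge 0)$,
\begin{equation*}
r(\boldsymbol{\zeta}(t))=\Gamma(\psi)(t)+\mu\lambda(t)G_1(\lambda(t))=\mu-2t+m(t).
\end{equation*}
Everything is thus governed by the scalar coupled system $\lambda(t)=1-\int_0^t \lambda(s)/(\mu-2s+m(s))\,ds$ with $m(t)=\sup_{s\le t}\big((2s+\mu\lambda(s)G_1(\lambda(s))-\mu)\vee 0\big)$. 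For two solutions, on each $[0,\tau_\star-\epsilon]$ with $\tau_\star\doteq\tau_{\boldsymbol{\zeta}}\wedge\tau_{\tilde{\boldsymbol{\zeta}}}$ the denominators $\mu-2s+m(s)$, $\mu-2s+\tilde m(s)$ are bounded below by a positive constant, $z\mapsto\mu z G_1(z)=\sum_{k\ge1}kp_k z^k$ is Lipschitz on $[0,1]$ (derivative $\le\sum_k k^2 p_k<\infty$ by Assumption \ref{asp:exponential-boundN-S}), and $\psi\mapsto m$ is $1$-Lipschitz in sup norm; a Gronwall estimate on $\sup_{s\le t}\big(|\lambda-\tilde\lambda|+|m-\tilde m|\big)(s)$ then forces the solutions to agree on $[0,\tau_\star)$. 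Since $r(\boldsymbol{\zeta})=r(\tilde{\boldsymbol{\zeta}})$ there and both are continuous, their first zeros coincide, so $\tau_{\boldsymbol{\zeta}}=\tau_{\tilde{\boldsymbol{\zeta}}}$; with the post-extinction behavior from Remark \ref{rmk:property_ODE}(c) this gives $\boldsymbol{\zeta}=\tilde{\boldsymbol{\zeta}}$ and $\psi=\tilde\psi$ on $[0,T]$. Comparing with the explicit formulas in Theorem \ref{thm:LLN} (whose $(\boldsymbol{\zeta}^\star,\psi^\star)$ solves the same system) identifies the limit.

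\emph{Main obstacle.} Two points require care. In Stage 1, the infimum defining $I_T$ need not be attained; the resolution is that, for fixed $(\boldsymbol{\zeta},\psi)$, the fluxes --- and hence the Jensen cost lower bound --- are control-independent, so the value $0$ transfers directly to the constructed lower bound. In Stage 2, the coefficient $r(\boldsymbol{\zeta})^{-1}$ blows up as $t\uparrow\tau_{\boldsymbol{\zeta}}$, so Gronwall can only be run up to $\tau_\star-\epsilon$ and one must separately argue that the two extinction times coincide; finding the clean scalar reformulation $r(\boldsymbol{\zeta}(t))=\mu-2t+m(t)$, with the running-infimum term carrying the Skorokhod contribution, is what makes this go through.
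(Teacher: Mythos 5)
Your proof is correct, and it takes a genuinely different route from the paper's in both stages; the comparison is worth recording.

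In Stage~1, the paper invokes Remark~\ref{rmk:unique_varphi} (which rests on the compactness analysis of Section~\ref{sec:rate_function}) to first produce a \emph{minimizing} control $\boldsymbol{\varphi}^{(i)}$, and then uses $\ell(x)=0\Leftrightarrow x=1$ to force $\varphi^{(i)}_k\equiv 1$. You instead note that the fluxes $F_k(s)=\int_0^1 1_{[0,r_k(\boldsymbol{\zeta}(s)))}(y)\,\varphi_k(s,y)\,dy$ are control-independent (determined by $-\zeta_k'$ for $k\ge 1$ and by $\psi'$ once those are substituted) and apply Jensen on $[0,r_k(\boldsymbol{\zeta}(s)))$ to get a control-independent lower bound $\sum_k\int r_k(\boldsymbol{\zeta}(s))\,\ell(F_k(s)/r_k(\boldsymbol{\zeta}(s)))\,ds$ for the cost; zero rate then forces $F_k=r_k(\boldsymbol{\zeta})$ a.e. directly. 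This is self-contained and does not use the attainment-of-infimum machinery, which is a mild streamlining (though the paper needs Remark~\ref{rmk:unique_varphi} anyway for other purposes, so it is not extra work for them).

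In Stage~2, both proofs face the same degeneracy, namely that the right-hand side $-k\zeta_k/r(\boldsymbol{\zeta})$ is not Lipschitz near extinction. The paper resolves this with a random time change $g^{(i)}$ solving $(g^{(i)})'=r(\boldsymbol{\zeta}^{(i)}\circ g^{(i)})$ before extinction, which converts the system into a Lipschitz ODE coupled with the Skorokhod map, for which Gronwall applies globally; the time change is then shown to be unique and inverted. You instead exploit the product structure: $\zeta_k(t)=p_k\lambda(t)^k$ with $\lambda=\exp(-\int_0^\cdot r^{-1})$, derive $\psi(t)=\mu-2t-\mu\lambda(t)G_1(\lambda(t))$ and the neat identity $r(\boldsymbol{\zeta}(t))=\mu-2t+m(t)$, reduce to a closed two-component system in $(\lambda,m)$, and run Gronwall on $[0,\tau_\star-\epsilon]$ (where $r,\tilde r$ are bounded below), finishing with a continuity argument that the extinction times must coincide. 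Both handle the singularity cleanly; the paper's time change is more general and does not require the explicit closed form, while yours is more elementary and exposes the effective one-dimensional dynamics $(\lambda,m)$. Minor points to tighten in a final write-up: justify the term-by-term differentiation of $\sum_k(k-2)\zeta_k$ (this follows from $\sum_k k\int_0^T F_k\,ds\le\mu<\infty$ via Fubini), and note explicitly that the Lipschitz constant for $z\mapsto\mu zG_1(z)$ on $[0,1]$, namely $\sum_k k^2 p_k$, is finite precisely because of Assumption~\ref{asp:exponential-boundN-S}.
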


\begin{proof}
	Suppose for $i=1,2$, $(\boldsymbol{\zeta}^{(i)},\psi^{(i)})$ are two pairs such that $I_T(\boldsymbol{\zeta}^{(i)},\psi^{(i)}) = 0$.
	By the definition of $I_T(\cdot)$, $(\boldsymbol{\zeta}^{(i)},\psi^{(i)}) \in \mathcal{C}_T$.
	It will be seen in Remark \ref{rmk:unique_varphi} that there exists some $\boldsymbol{\varphi}^{(i)} \in \mathcal{S}_T(\boldsymbol{\zeta}^{(i)},\psi^{(i)})$ such that
	\begin{equation*}
		\sum_{k=0}^\infty \int_{[0,T] \times [0,1]} \ell(\varphi_k^{(i)}(s,y)) \, ds\,dy =I_T(\boldsymbol{\zeta}^{(i)},\psi^{(i)})= 0.
	\end{equation*}	
	Since $\ell(x) = 0$ if and only if $x=1$, we must have $\varphi_k^{(i)}(s,y) = 1$ for a.e.\ $(s,y) \in [0,T]\times[0,1]$ and $k \in \mathbb{N}_0$. 
	Using such $\varphi^{(i)}$ with \eqref{eq:psi} and \eqref{eq:phi_k}, we see that
	\begin{align}
		\zeta_k^{(i)}(t) & = p_k - \int_0^t  r_k(\boldsymbol{\zeta}^{(i)}(s))\,ds, k \in \mathbb{N}, \label{eq:phi_unique_LLN} \\
		\psi^{(i)}(t) & = \sum_{k=0}^\infty (k-2) \int_0^t  r_k(\boldsymbol{\zeta}^{(i)}(s))\,ds. \label{eq:psi_unique_LLN}
	\end{align}
	Since $\zeta_0^{(i)} = \Gamma(\psi^{(i)})$, we have for a.e. $t$ that $(\zeta_0^{(i)})'(t) \ge (\psi^{(i)})'(t) = \sum_{k=0}^\infty (k-2)  r_k(\boldsymbol{\zeta}^{(i)}(t))$, and by (\ref{eq:r_k})
	\begin{align*}
		\frac{d}{dt} r(\boldsymbol{\zeta}^{(i)}(t)) & = (\zeta_0^{(i)})'(t) + \sum_{k=1}^\infty k (\zeta_k^{(i)})'(t) \ge \sum_{k=0}^\infty (k-2)  r_k(\boldsymbol{\zeta}^{(i)}(t)) - \sum_{k=1}^\infty k  r_k(\boldsymbol{\zeta}^{(i)}(t)) \\
		& = -2\cdot{{1}}_{\{r(\boldsymbol{\zeta}^{(i)}(t))>0\}} \ge -2.
	\end{align*}
	Consider the strictly increasing function $g^{(i)}(t)$ defined by 
	\begin{equation}
		\label{eq:g_unique_LLN}
		g^{(i)}(0) = 0, \: (g^{(i)})'(t) = r(\boldsymbol{\zeta}^{(i)}(g^{(i)}(t))) {{1}}_{\{g^{(i)}(t) < \tau_{\boldsymbol{\zeta}^{(i)}}\}} + {{1}}_{\{g^{(i)}(t) \ge \tau_{\boldsymbol{\zeta}^{(i)}}\}},
	\end{equation}
	where $\tau_{\boldsymbol{\zeta}^{(i)}}$ is as in \eqref{eq:defntauphi}. 
	Since $\frac{d}{dt} r(\boldsymbol{\zeta}^{(i)}(t)) \in [-2,0]$ and $0 \le r(\boldsymbol{\zeta}^{(i)}(\cdot)) \le r(\boldsymbol{\zeta}^{(i)}(0)) = \sum_{k=1}^\infty kp_k < \infty$, we see that $r(\boldsymbol{\zeta}^{(i)}(\cdot))$ is bounded and Lipschitz.
	Also $r(\boldsymbol{\zeta}^{(i)}(t)) > 0$ for $t < \tau_{\boldsymbol{\zeta}^{(i)}}$.
	So we have existence and uniqueness of the strictly increasing function $g^{(i)}(t)$ before it reaches $\tau_{\boldsymbol{\zeta}^{(i)}}$.
	The existence, uniqueness and monotonicity of $g^{(i)}(t)$ after it reaches $\tau_{\boldsymbol{\zeta}^{(i)}}$ is straightforward.
	
	Now define $({\tilde{\boldsymbol{\zeta}}}^{(i)}(t),{\tilde{\psi}}^{(i)}(t)) \doteq (\boldsymbol{\zeta}^{(i)}(g^{(i)}(t)),\psi^{(i)}(g^{(i)}(t)))$.
	From \eqref{eq:phi_unique_LLN} and \eqref{eq:psi_unique_LLN} one can verify that
	\begin{align*}
		{\tilde{\zeta}}_k^{(i)}(t) & = p_k - \int_0^t  k{\tilde{\zeta}}_k^{(i)}(s)\,ds, k \in \mathbb{N}, \\
		{\tilde{\psi}}^{(i)}(t) & = \sum_{k=1}^\infty (k-2) \int_0^t  k{\tilde{\zeta}}_k^{(i)}(s)\,ds - 2 \int_0^t  {\tilde{\zeta}}_0^{(i)}(s)\,ds \\
		& = \sum_{k=1}^\infty (k-2) \int_0^t  k{\tilde{\zeta}}_k^{(i)}(s)\,ds - 2 \int_0^t  \Gamma({\tilde{\psi}}^{(i)})(s)\,ds.		
	\end{align*}
	Clearly ${\tilde{\zeta}}_k^{(1)} = {\tilde{\zeta}}_k^{(2)}$ for each $k \in \mathbb{N}$.
	Also, since $\Gamma$ is Lipschitz on path space, Gronwall's inequality implies ${\tilde{\psi}}^{(1)} = {\tilde{\psi}}^{(2)}$, and hence ${\tilde{\zeta}}_0^{(1)} = {\tilde{\zeta}}_0^{(2)}$.
	Noting that \eqref{eq:g_unique_LLN} can be written as
	\begin{equation*}
		g^{(i)}(0) = 0, \: (g^{(i)})'(t) = r(\tilde{\boldsymbol{\zeta}}^{(i)}(t)) {{1}}_{\{r(\tilde{\boldsymbol{\zeta}}^{(i)}(t)) > 0\}} + {{1}}_{\{r(\tilde{\boldsymbol{\zeta}}^{(i)}(t)) = 0\}},
	\end{equation*}
	we have $g^{(1)} = g^{(2)}$.
	Since $g^{(i)}$ is strictly increasing, its inverse function is well-defined and we must have that $(\boldsymbol{\zeta}^{(1)},\psi^{(1)})=(\boldsymbol{\zeta}^{(2)},\psi^{(2)})$.
	This completes the proof.
\end{proof}

\subsection{Large deviations for degree distributions in a component}

\label{sec:degree_distribution}
We say a component of the graph $G([n],{\boldsymbol{d}}(n))$ has degree configuration $\{\bar n_k\}_{k \in \mathbb{N}}$ if it has $\bar n_k$ vertices of degree $k$, $k \in \mathbb{N}$.

Given ${\boldsymbol{q}}=(q_{k},k\in \mathbb{N})$ such that $0\leq {\boldsymbol{q}} \leq {\boldsymbol{p}}$, 
we are interested in the asymptotic exponential rate of decay of the probability of the 
following event 
\begin{align*}
E^{n,\varepsilon }({\boldsymbol{q}})& \doteq \{\exists \text{ a component
with degree configuration }\{\bar{n}_{k}\} \\
& \quad \quad \text{ satisfying }(q_{k}-\varepsilon )n\leq \bar{n}_{k}\leq
(q_{k}+\varepsilon )n,\,k\in \mathbb{N}\}, \quad n \in \mathbb{N}, \varepsilon \in (0,1),
\end{align*}%
namely, we want to characterize $\lim_{\varepsilon \rightarrow
0}\lim_{n\rightarrow \infty }\frac{1}{n}\log {P}\left\{
E^{n,\varepsilon }({\boldsymbol{q}})\right\} $. We assume that ${\boldsymbol{%
q}}$ satisfies the following condition: 
\begin{equation*}
\sum_{k=1}^{\infty }kq_{k}>2\sum_{k=1}^{\infty }q_{k}.
\end{equation*}%
This condition guarantees that there are strictly more edges than vertices
in the component so that connected components with degree distribution ${%
\boldsymbol{q}}$ exist. Define $\beta \doteq \beta ({\boldsymbol{q}})$ as
follows: $\beta =0$ when $q_{1}=0$, and when $q_{1}>0$, $\beta \in (0,1)$ is
the unique solution (see Remark \ref{rmk:uniqueness_beta}) of the equation 
\begin{equation*}
\sum_{k=1}^{\infty }kq_{k}=(1-\beta ^{2})\sum_{k=1}^{\infty }\frac{kq_{k}}{%
1-\beta ^{k}}.
\end{equation*}%
Define the function $K({\boldsymbol{q}})$ by 
\begin{equation*}
K({\boldsymbol{q}})\doteq \left( \frac{1}{2}\sum_{k=1}^{\infty
}kq_{k}\right) \log (1-\beta ({\boldsymbol{q}})^{2})-\sum_{k=1}^{\infty
}q_{k}\log (1-\beta ({\boldsymbol{q}})^{k}).
\end{equation*}%
Further define for $\tilde{\boldsymbol{p}}\leq {\boldsymbol{p}}$, 
\begin{equation*}
H(\tilde{\boldsymbol{p}})\doteq \sum_{k=1}^{\infty }{\tilde{p}}_{k}\log {%
\tilde{p}}_{k}-\left( \frac{1}{2}\sum_{k=1}^{\infty }k{\tilde{p}}_{k}\right)
\log \left( \frac{1}{2}\sum_{k=1}^{\infty }k{\tilde{p}}_{k}\right) .
\end{equation*}%
Define ${\tilde{I}}_{1}({\boldsymbol{q}})\doteq H({\boldsymbol{q}})+H({%
\boldsymbol{p}}-{\boldsymbol{q}})-H({\boldsymbol{p}})+K({\boldsymbol{q}})$.

\begin{Remark}
\label{rmk:uniqueness_beta} The existence and uniqueness of $\beta ({%
\boldsymbol{q}})$ can be seen as follows. For $\alpha \in (0,1)$ consider%
\begin{equation*}
\alpha F(\alpha )\doteq \sum_{k=1}^{\infty }kq_{k}-(1-\alpha
^{2})\sum_{k=1}^{\infty }\frac{kq_{k}}{1-\alpha ^{k}}=\alpha \left(
\sum_{k=3}^{\infty }\frac{\alpha -\alpha ^{k-1}}{1-\alpha ^{k}}%
kq_{k}-q_{1}\right) 
\end{equation*}%
For $k\geq 3$ and $\alpha \in (0,1)$ let%
\begin{equation*}
F_{k}(\alpha )\doteq \frac{\alpha -\alpha ^{k-1}}{1-\alpha ^{k}}.
\end{equation*}%
It is easy to verify that $F_{k}(\cdot )$ is increasing on $(0,1)$. Thus for 
$\alpha \in (0,1)$, $0=F_{k}(0+)<F_{k}(\alpha )<F_{k}(1-)=\frac{k-2}{k}$,
and therefore 
\begin{equation*}
-q_{1}=F(0+)<F(\alpha )<F(1-)=\sum_{k=3}^{\infty }(k-2)q_{k}-q_{1}.
\end{equation*}%
Since $F$ is continuous on $(0,1)$, $-q_{1}<0$ and $\sum_{k=3}^{\infty
}(k-2)q_{k}-q_{1}=\sum_{k=1}^{\infty }kq_{k}-2\sum_{k=1}^{\infty }q_{k}>0$,
we have the existence and uniqueness of $\beta ({\boldsymbol{q}})$.
\end{Remark}


The following result gives asymptotics of the event $E^{n,\varepsilon }({%
\boldsymbol{q}})$. The proof of the theorem, which is based on Theorem \ref%
{thm:main-ldp}, is given in a forthcoming paper.

%

\begin{Theorem}
\label{thm:ldg_degree_distribution} 
Suppose that Assumptions \ref{asp:convgN} and \ref{asp:exponential-boundN} hold. 
Suppose ${\boldsymbol{q}} \le {\boldsymbol{p}}$ and $\sum_{k=1}^\infty k q_k > 2 \sum_{k=1}^\infty q_k$.
Then
\begin{enumerate}[(i)]

\item (Upper bound) when $p_1=p_2=0$, we have $\beta({\boldsymbol{q}})=0$, $K({%
\boldsymbol{q}})=0$ and 
\begin{equation*}
\limsup_{\varepsilon \to 0} \limsup_{n \to \infty } \frac{1}{n} \log {%
P}\left\{ E^{n,\varepsilon}({\boldsymbol{q}})\right\} \le -{%
\tilde{I}}_1({\boldsymbol{q}}).
\end{equation*}

\item (Lower bound) 
\begin{equation*}
\liminf_{\varepsilon \to 0} \liminf_{n \to \infty } \frac{1}{n} \log {%
P}\left\{ E^{n,\varepsilon}({\boldsymbol{q}})\right\} \ge -{%
\tilde{I}}_1({\boldsymbol{q}}).
\end{equation*}
\end{enumerate}
\end{Theorem}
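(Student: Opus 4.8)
The plan is to derive Theorem~\ref{thm:ldg_degree_distribution} from the large deviation principle of Theorem~\ref{thm:main-ldp} by (i) expressing $E^{n,\varepsilon}(\boldsymbol{q})$ as an excursion event for the exploration path $(\boldsymbol{X}^n,Y^n)$, (ii) invoking the LDP to turn the probability asymptotics into a variational problem for $I_T$, and (iii) solving that variational problem. For step (i), recall that $X_0^n$ returns to (essentially) its running minimum exactly when a component is completed, and that the vertices of a component are precisely those woken during the corresponding excursion of $X_0^n$ away from $0$; hence a component has degree configuration $\{\bar n_k\}$ if and only if $X_k^n$ decreases by $\bar n_k/n$ over that excursion interval. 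Thus, writing
\begin{equation*}
G^\varepsilon \doteq \big\{(\boldsymbol{\zeta},\psi): \exists\, 0\le\sigma_1<\sigma_2\le T,\ \zeta_0(\sigma_1)=\zeta_0(\sigma_2)=0,\ \zeta_0>0 \text{ on } (\sigma_1,\sigma_2),\ |\zeta_k(\sigma_1)-\zeta_k(\sigma_2)-q_k|\le\varepsilon\ \forall k\big\},
\end{equation*}
one obtains, up to $O(1/n)$ corrections from the reflection term (so that $X_0^n\ge-1/n$ rather than $\ge0$), the sandwich $\{(\boldsymbol{X}^n,Y^n)\in G^{\varepsilon,\circ}\}\subseteq E^{n,\varepsilon}(\boldsymbol{q})\subseteq\{(\boldsymbol{X}^n,Y^n)\in\overline{G^\varepsilon}\}$ for all large $n$, where $G^{\varepsilon,\circ}$ is the interior. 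The uniform integrability in Remark~\ref{rmk:property_ODE}(a) renders the constraint ``for all $k$'' effectively finite up to arbitrarily small error, which is what permits a clean passage $\varepsilon\to0$ at the end.

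Applying the lower bound of Theorem~\ref{thm:main-ldp} to $G^{\varepsilon,\circ}$ and the upper bound to $\overline{G^\varepsilon}$, and then sending $\varepsilon\to0$, the theorem reduces to the identity
\begin{equation*}
\inf\big\{I_T(\boldsymbol{\zeta},\psi):(\boldsymbol{\zeta},\psi)\in\mathcal C_T \text{ has an excursion of } \zeta_0 \text{ over which } (\zeta_k)_{k\ge1} \text{ drops by exactly } \boldsymbol{q}\big\}=\tilde I_1(\boldsymbol{q}),
\end{equation*}
the existence of a minimizer attaining this value yielding the reverse of the lower bound in (ii), and the lower bound on the infimum yielding (i). The next reduction is that, since $I_T$ accumulates cost additively in time and the portions of the trajectory outside the distinguished excursion may be taken to follow the zero-cost (law of large numbers) dynamics of Theorem~\ref{thm:LLN} for the residual profile, the infimum equals the cost of a \emph{single} excursion started from profile $\boldsymbol{p}$ that consumes exactly $\boldsymbol{q}$; placing the distinguished component first is optimal, which I would justify by a ``restart'' argument showing the per-excursion cost is smallest from the richest available profile. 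One must also verify that the residual profile $\boldsymbol{p}-\boldsymbol{q}$ still admits a zero-cost completion on $[0,T]$; here the hypothesis $\sum_k kq_k>2\sum_k q_k$ and a suitable lower bound on $T$ (as in Theorem~\ref{thm:LLN}) enter.

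For the one-excursion problem I would first apply Jensen's inequality and convexity of $\ell$: writing $u_k(s)\doteq\int_0^{r_k(\boldsymbol{\zeta}(s))}\varphi_k(s,y)\,dy$ for the consumption rates, the cost is bounded below by $\sum_k\int r_k(\boldsymbol{\zeta}(s))\,\ell\big(u_k(s)/r_k(\boldsymbol{\zeta}(s))\big)\,ds$, with equality when $\varphi_k$ is constant in $y$ on $[0,r_k(\boldsymbol{\zeta}(s)))$, equals $1$ elsewhere, and $\varphi_0\equiv1$. One is then left to minimize this functional subject to $\dot\zeta_k=-u_k$, $\zeta_0=\Gamma(\psi)$, $\dot\psi=\sum_k(k-2)u_k$, $\zeta_0>0$ on the interior of the excursion, and $\int u_k=q_k$. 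I would solve this by guess-and-verify: the optimizer is a degree-biased ``tilted'' exploration governed by a single parameter, with the excursion conditioned to survive (not return to $0$ prematurely), which is exactly the role played by the connectivity constraint on the component. The Lagrange condition linking ``consume exactly $\boldsymbol{q}$'' with ``return to $0$ at the end'' is precisely the defining equation of $\beta(\boldsymbol{q})$ in Remark~\ref{rmk:uniqueness_beta}; a direct computation then identifies the optimal cost with $H(\boldsymbol{q})+H(\boldsymbol{p}-\boldsymbol{q})-H(\boldsymbol{p})+K(\boldsymbol{q})=\tilde I_1(\boldsymbol{q})$, where the $H$-terms are the relative-entropy cost of the initial ``coloring'' of which vertices lie in the component and $K(\boldsymbol{q})$ is the cost of the survival/connectivity constraint, explaining the $\log(1-\beta^2)$ and $\log(1-\beta^k)$ in the definition of $K$.

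The hard part will be twofold. First, the variational problem: exhibiting the optimizer is routine once the answer is known, but the lower bound on the infimum — ruling out \emph{all} admissible controlled trajectories that might do better, including those that ``waste'' half-edges or split into and recombine several excursions — requires genuine work and is where structure/uniqueness arguments in the spirit of Section~\ref{sec:lower} are needed. Second, the upper bound half of the theorem, which is why it is asserted only when $p_1=p_2=0$: with minimum degree at least $3$ the excursion-drop functional of step (i) is rigid (a macroscopic component forces a macroscopic, well-separated excursion), so $\overline{G^\varepsilon}$ contracts to the exact-drop set as $\varepsilon\to0$ without loss and the per-excursion cost is lower semicontinuous in $\boldsymbol{q}$; but degree-$1$ vertices create pendant edges and degree-$2$ vertices create long chains that keep $X_0^n$ near $0$, destroying this rigidity and making both the passage $\varepsilon\to0$ in the closed-set bound and the semicontinuity of the variational value delicate — that case is what is deferred to the forthcoming paper.
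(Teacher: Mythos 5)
The paper does not actually contain a proof of Theorem~\ref{thm:ldg_degree_distribution}: immediately before the theorem statement it says ``The proof of the theorem, which is based on Theorem~\ref{thm:main-ldp}, is given in a forthcoming paper.'' So there is no in-paper argument to compare your proposal against, and your sketch cannot be checked line-by-line. What one can say is that your high-level plan (translate $E^{n,\varepsilon}(\boldsymbol{q})$ into an excursion-drop event for the path $(\boldsymbol{X}^n,Y^n)$, apply the LDP of Theorem~\ref{thm:main-ldp} as a contraction, and identify the variational infimum with $\tilde I_1(\boldsymbol{q})$) is the natural reading of the sentence ``based on Theorem~\ref{thm:main-ldp}'' and is broadly consistent with how the objects $\beta(\boldsymbol{q})$, $K(\boldsymbol{q})$, and $H(\cdot)$ are set up in Section~\ref{sec:degree_distribution}.

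That said, your proposal is a roadmap with several load-bearing steps left as assertions, and you have honestly flagged the two hardest ones; I would add a few concrete concerns. First, the ``restart'' claim that the distinguished excursion can be placed first at no loss in the infimum, and that outside it one may follow zero-cost LLN dynamics for the residual profile $\boldsymbol{p}-\boldsymbol{q}$, is genuinely nontrivial: because the local rate in \eqref{eq:rate_function} depends on the current degree profile through $r_k(\boldsymbol{\zeta}(\cdot))$, the cost of producing a given component is not invariant under when it is explored, and a careful superadditivity or ``exchange'' argument is required before the infimum can be reduced to a single-excursion problem from $\boldsymbol{p}$. Second, the role you assign to $p_1=p_2=0$ (rigidity of excursion drops, lower semicontinuity under $\varepsilon\to 0$) does not match the paper's own stated reason, which is that $p_1=p_2=0$ is needed ``to ensure that $\tilde I_1(\boldsymbol{q})$ is the minimizer of certain optimization problem'' — i.e., an issue at the level of the variational problem itself, not only at the level of the contraction/closure step; your diagnosis may be related but is not obviously the same. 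Third, the Jensen reduction to a finite-dimensional control problem in the rates $u_k(s)$ is fine, but identifying the optimizer as a ``degree-biased tilted exploration conditioned to survive'' and matching the Lagrange conditions to the defining equation of $\beta(\boldsymbol{q})$ is exactly where the real work lies, and nothing in your sketch shows that this guess attains the infimum rather than merely being a stationary point. In short, the strategy is plausible and aligned with the intended route, but as written it is an outline rather than a proof, and the paper offers no proof here against which to certify it.
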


\begin{Remark}
	Note that the upper and lower bounds in Theorem \ref{thm:ldg_degree_distribution} coincide when $p_1=p_2=0$ and give the asymptotic exponential rate of decay of the event $E^{n,\varepsilon}({\boldsymbol{q}})$.
	The requirement $p_1=p_2=0$ for the upper bound is a technical assumption to ensure that ${\tilde{I}}_1({\boldsymbol{q}})$ is the minimizer of certain optimization problem. 
\end{Remark}


\section{Representation and Weak Convergence of Controlled Processes}

\label{sec:repnWCCP} We will use the following useful representation formula
proved in \cite{BudhirajaDupuisMaroulas2011variational}. For the second
equality in the theorem see the proof of Theorem $2.4$ in \cite%
{BudhirajaChenDupuis2013large}. The representation in the cited papers is
given in terms of a single Poisson random measure with points in a locally
compact Polish space. However for the current work it is convenient to
formulate the representation in terms of a countable sequence of independent
Poisson random measures on $[0,T]\times \lbrack 0,1]$. This representation
is immediate from the results in \cite%
{BudhirajaDupuisMaroulas2011variational} and \cite%
{BudhirajaChenDupuis2013large} by viewing the countable sequence of Poisson
random measures with points in $[0,T]\times \lbrack 0,1]$ and intensity the
Lebesgue measure $\lambda _{T}$ on $[0,T]\times \lbrack 0,1]$ as a single
PRM with points in the augmented space $[0,T]\times \lbrack 0,1]\times 
\mathbb{N}_{0}$ with intensity $\lambda _{T}\otimes \varrho $, where $%
\varrho $ is the counting measure on $\mathbb{N}$. Recall that $\bar{%
\mathcal{A}}_{+}$ denotes the class of $(\mathcal{\bar{P}}\times \mathcal{B}%
([0,1]))/\mathcal{B}(\mathbb{R}_{+})$-measurable maps from $\Omega \times
\lbrack 0,T]\times \lbrack 0,1]$ to $\mathbb{R}_{+}$. For each $m\in \mathbb{%
N}$ let 
\begin{align*}
& \bar{\mathcal{A}}_{b,m}\doteq \{(\varphi _{k})_{k\in \mathbb{N}%
_{0}}:\varphi _{k}\in \bar{\mathcal{A}}_{+}\mbox{ for each }k\in \mathbb{N}%
_{0}\mbox{ such that for all }(\omega ,t,y)\in \Omega \times \lbrack
0,T]\times \lbrack 0,1], \\
& \quad \qquad \qquad \qquad \qquad \frac{1}{m}\leq \varphi _{k}(\omega
,t,y)\leq m\mbox{ for }k\leq m\mbox{ and }\varphi _{k}(\omega ,t,y)=1\mbox{
for }k>m\}
\end{align*}%
and let $\bar{\mathcal{A}}_{b}\doteq \cup _{m=1}^{\infty }\bar{\mathcal{A}}%
_{b,m}$. Recall the function $\ell $ defined in \eqref{eq:ell}.

\begin{Theorem}
\label{thm:var_repn} Let $F \in \mathbb{M}_b([\mathcal{M}_{FC}([0,T]\times[%
0,1])]^\infty)$. Then for $\theta > 0$, 
\begin{align*}
-\log {{E}} e^{-F((N_k^\theta)_{k \in \mathbb{N}_0})} & =
\inf_{\varphi_k\in \bar{\mathcal{A}}_+, k \in \mathbb{N}_0} {{E}} %
\left[ \theta \sum_{k=0}^\infty \int_{[0,T] \times [0,1]}
\ell(\varphi_k(s,y))\,ds\,dy + F((N_k^{\theta\varphi_k})_{k \in \mathbb{N}%
_0}) \right] \\
& = \inf_{\boldsymbol{\varphi} = (\varphi_k)_{k \in \mathbb{N}_0} \in \bar{\mathcal{A}}%
_b} {{E}} \left[ \theta \sum_{k=0}^\infty \int_{[0,T] \times [0,1]}
\ell(\varphi_k(s,y))\,ds\,dy + F((N_k^{\theta\varphi_k})_{k \in \mathbb{N}%
_0}) \right].
\end{align*}
\end{Theorem}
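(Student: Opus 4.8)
The plan is to deduce both equalities directly from the single--Poisson--random--measure variational representation of \cite{BudhirajaDupuisMaroulas2011variational} (and, for the second equality, from the proof of Theorem~2.4 of \cite{BudhirajaChenDupuis2013large}) by amalgamating the countable family $\{N_k\}_{k \in \mathbb{N}_0}$ into a single Poisson random measure carrying an extra discrete coordinate. Concretely, I would set $\mathbb{X} \doteq [0,1] \times \mathbb{N}_0$, where $\mathbb{N}_0$ has the discrete topology; then $\mathbb{X}$ is a locally compact Polish space whose compact subsets are contained in sets of the form $[0,1] \times \{0,1,\dotsc,m\}$, and $\nu \doteq \mathrm{Leb}_{[0,1]} \otimes \varrho$, with $\varrho$ the counting measure on $\mathbb{N}_0$, is $\sigma$-finite and finite on compacts. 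The i.i.d.\ family $\{N_k\}_{k \in \mathbb{N}_0}$, each a PRM on $[0,T] \times [0,1] \times \mathbb{R}_+$ with Lebesgue intensity, assembles into a single PRM $\bar N$ on $[0,T] \times \mathbb{X} \times \mathbb{R}_+$ with intensity $\lambda_T \otimes \nu \otimes \mathrm{Leb}_{\mathbb{R}_+}$ by placing the points of each $N_k$ over the singleton $\{k\} \subset \mathbb{N}_0$; moreover $\{\mathcal{F}_t\}$ is precisely the completed filtration generated by $\bar N$, so the admissibility classes match up.

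The key step is then a bookkeeping identification. Let $\Xi$ denote the natural bijection from $\mathcal{M}_{FC}([0,T]\times\mathbb{X})$ onto $[\mathcal{M}_{FC}([0,T]\times[0,1])]^\infty$ sending a measure $m$ to the sequence $(m(\cdot\times\{k\}))_{k \in \mathbb{N}_0}$ of its sections over $\mathbb{N}_0$. I would verify that $\Xi$ is a homeomorphism when the left-hand side carries the vague topology and the right-hand side the product of the vague topologies on the factors: the point is that every $f \in C_c([0,T]\times\mathbb{X})$ vanishes off finitely many sections, so integration of $f$ against $m$ reduces to a finite sum of integrations against the sections $m(\cdot\times\{k\})$, while conversely $g \otimes \mathbf{1}_{\{j\}} \in C_c([0,T]\times\mathbb{X})$ for every $g \in C_c([0,T]\times[0,1])$ and $j \in \mathbb{N}_0$. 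In particular $F \circ \Xi$ is a bounded measurable functional on $\mathcal{M}_{FC}([0,T]\times\mathbb{X})$. Under $\Xi$, a control $(\varphi_k)_{k \in \mathbb{N}_0}$ with each $\varphi_k \in \bar{\mathcal{A}}_+$ corresponds to the single predictable map $\bar\varphi(\omega,s,y,k) \doteq \varphi_k(\omega,s,y)$ (predictability in the $\mathbb{N}_0$ variable is automatic, since that coordinate carries the power-set $\sigma$-field), the thinned measures satisfy $\Xi(\bar N^{\theta\bar\varphi}) = (N_k^{\theta\varphi_k})_{k \in \mathbb{N}_0}$, and by Tonelli the costs agree: $\int_{[0,T]\times\mathbb{X}} \ell(\bar\varphi)\, d(\lambda_T \otimes \nu) = \sum_{k=0}^\infty \int_{[0,T]\times[0,1]} \ell(\varphi_k(s,y))\, ds\, dy$. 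Finally, the subclass $\bar{\mathcal{A}}_b$ is exactly the image of those controls $\bar\varphi$ that are bounded above and below and equal to $1$ outside $[0,T]\times[0,1]\times\{0,1,\dotsc,m\}$ for some $m$.

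With these correspondences in place the first equality is immediate from the variational representation in \cite{BudhirajaDupuisMaroulas2011variational}, applied to the PRM $\bar N$, the $\sigma$-finite intensity $\lambda_T \otimes \nu \otimes \mathrm{Leb}_{\mathbb{R}_+}$, and the bounded measurable functional $F \circ \Xi$. The second equality is the assertion that, for this $\bar N$, restricting the infimum to the bounded-control class does not change its value; this is carried out in the proof of Theorem~2.4 of \cite{BudhirajaChenDupuis2013large} by truncating a near-optimal control (bounding it above and below and resetting it to $1$ on the tail of $\mathbb{N}_0$) and estimating the resulting changes in cost and in $F \circ \Xi$ using $\int \ell(\bar\varphi)\, d(\lambda_T \otimes \nu) < \infty$ and $\|F\|_\infty < \infty$ respectively.

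The main obstacle I anticipate is the verification that $\Xi$ is a Borel isomorphism --- indeed a homeomorphism --- between $\mathcal{M}_{FC}([0,T]\times\mathbb{X})$ and $[\mathcal{M}_{FC}([0,T]\times[0,1])]^\infty$ with their respective topologies, together with the companion checks that the augmented space $[0,T]\times\mathbb{X}\times\mathbb{R}_+$ meets the structural hypotheses of the cited theorems (local compactness, Polishness, $\sigma$-finite and locally finite intensity) and that the amalgamated control $\bar\varphi$ is predictable. Once these are in hand, the remainder is a routine translation through $\Xi$ together with an application of Tonelli, with no new estimates required.
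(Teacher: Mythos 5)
Your proposal is correct and follows exactly the route the paper sketches: amalgamating the countable family $\{N_k\}$ into a single PRM on the augmented space $[0,T]\times[0,1]\times\mathbb{N}_0$ with intensity $\lambda_T\otimes\varrho$ and then invoking the single-PRM representation from \cite{BudhirajaDupuisMaroulas2011variational} (and Theorem 2.4 of \cite{BudhirajaChenDupuis2013large} for the second equality). The paper leaves the bookkeeping implicit (calling the result ``immediate''), while you fill in the homeomorphism $\Xi$, the predictability check, and the Tonelli step, but the underlying argument is the same.
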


Fix $h\in \mathbb{C}_{b}(\mathcal{D}_{\infty }\times \mathcal{D})$. Since $%
(\boldsymbol{X}^{n},Y^{n})$ can be written as $\Psi ((N_{k}^{n})_{k\in \mathbb{N}_{0}})$
for some measurable function $\Psi $ from $[\mathcal{M}_{FC}([0,T]\times
\lbrack 0,1])]^{\infty }$ to $\mathcal{D}_{\infty }\times \mathcal{D}$, we
have from the second equality in Theorem \ref{thm:var_repn} that with $%
(\theta ,F)=(n,nh\circ \Psi )$, 
\begin{equation}
-\frac{1}{n}\log {{E}}e^{-nh(\boldsymbol{X}^{n},Y^{n})}=\inf_{\boldsymbol{\varphi}
^{n}=(\varphi _{k}^{n})_{k\in \mathbb{N}_{0}}\in \bar{\mathcal{A}}_{b}}{%
{E}}\left\{ \sum_{k=0}^{\infty }\int_{[0,T]\times \lbrack 0,1]}\ell
(\varphi _{k}^{n}(s,y))\,ds\,dy+h(\bar{\boldsymbol{X}}^{n},{\bar{Y}}%
^{n})\right\} .  \label{eq:mainrepn17}
\end{equation}%
Here $(\bar{\boldsymbol{X}}^{n},{\bar{Y}}^{n})=\Psi
((N_{k}^{n\varphi _{k}^{n}})_{k\in \mathbb{N}_{0}})$, which solves the
controlled analogue of \eqref{eq:Y_n}--\eqref{eq:X_n_k}, namely $\bar{\boldsymbol{X}}%
^{n}(0)\doteq \frac{1}{n}(-1,n_{1},n_{2},\dotsc )$, and for $%
t\in \lbrack 0,T]$, 
\begin{align}
{\bar{Y}}^{n}(t)& ={\bar{X}}_{0}^{n}(0)+\sum_{k=0}^{\infty }\frac{k-2}{n}\int_{[0,t]\times \lbrack 0,1]}{%
{1}}_{[0,r_{k}(\bar{\boldsymbol{X}}^{n}(s-)))}(y)\,N_{k}^{n\varphi _{k}^{n}}(ds\,dy)
\label{eq:Ybar_n_upper_temp} \\
{\bar{X}}_{0}^{n}(t)& ={\bar{Y}}^{n}(t) +\frac{2}{n}\sum_{k=0}^{\infty }\int_{[0,t]\times \lbrack 0,1]}{%
{1}}_{\{{\bar{X}}_{0}^{n}(s-)<0\}}{{1}}%
_{[0,r_{k}(\bar{\boldsymbol{X}}^{n}(s-)))}(y)\,N_{k}^{n\varphi
_{k}^{n}}(ds\,dy),  \label{eq:Xbar_n_0_upper_temp} \\
{\bar{X}}_{k}^{n}(t)& ={\bar{X}}_{k}^{n}(0)-\frac{1%
}{n}\int_{[0,t]\times \lbrack 0,1]}{{1}}_{[0,r_{k}(\bar{\boldsymbol{X}}%
^{n}(s-)))}(y)\,N_{k}^{n\varphi _{k}^{n}}(ds\,dy),\; k\in \mathbb{%
N}.  \label{eq:Xbar_n_k_upper_temp}
\end{align}%
There is a bar in the notation $\bar{\boldsymbol{X}}^n, \bar{Y}^n$ (and $\bar{\nu}^n$ defined in \eqref{eq:nu_n_upper} below ) to indicate that these are `controlled' processes, given in terms of the 
control sequence $\boldsymbol{\varphi}^{n}\doteq (\varphi _{k}^{n})_{k\in \mathbb{N}_{0}}$.
We will occasionally suppress the dependence on $\boldsymbol{\varphi}^{n}$ in the notation and will make this dependence explicit if there are multiple controls (e.g. as in Section \ref{sec:upper}) 

In the proof of both the upper and lower bound it will be sufficient to
consider a sequence $\{\varphi _{k}^{n}\in \bar{\mathcal{A}}_{+},k\in 
\mathbb{N}_{0}\}$ that satisfies the following uniform bound for some $%
M_{0}<\infty $ 
\begin{equation}
\sup_{n\in \mathbb{N}}\sum_{k=0}^{\infty }\int_{[0,T]\times \lbrack
0,1]}\ell (\varphi _{k}^{n}(s,y))\,ds\,dy\leq M_{0},\mbox{ a.s. }{%
P}.  \label{eq:cost_bd_upper}
\end{equation}
In the rest of this section we study tightness and convergence properties of
controlled processes $(\bar{\boldsymbol{X}}^{n},{\bar{Y}}^{n})$
that are driven by controls $\{\varphi_k^n\}$ that satisfy the above a.s.
bound. 

From \eqref{eq:Ybar_n_upper_temp}--\eqref{eq:Xbar_n_k_upper_temp} we can
rewrite
\begin{align}
{\bar{Y}}^{n}(t)& ={\bar{X}}_{0}^{n}(0)+\sum_{k=0}^{\infty }(k-2){\bar{B}}%
_{k}^{n}(t),  \label{eq:Ybar_n_upper} \\
{\bar{X}}_{0}^{n}(t)& ={\bar{Y}}^{n}(t)+{\bar{\eta}}^{n}(t),
\label{eq:Xbar_n_0_upper} \\
{\bar{X}}_{k}^{n}(t)& ={\bar{X}}_{k}^{n}(0)-{\bar{B}}_{k}^{n}(t),k\in 
\mathbb{N},  \label{eq:Xbar_n_k_upper}
\end{align}%
where 
\begin{align}
{\bar{B}}_{k}^{n}(t)& \doteq \frac{1}{n}\int_{[0,t]\times \lbrack 0,1]}{%
{1}}_{[0,r_{k}(\bar{\boldsymbol{X}}^{n}(s-)))}(y)\,N_{k}^{n\varphi
_{k}^{n}}(ds\,dy),k\in \mathbb{N}_{0},  \label{eq:Bbar_n_k_upper} \\
{\bar{\eta}}^{n}(t)& \doteq \sum_{k=0}^{\infty }\frac{2}{n}\int_{[0,t]\times
\lbrack 0,1]}{{1}}_{\{{\bar{X}}_{0}^{n}(s-)<0\}}{{1}}%
_{[0,r_{k}(\bar{\boldsymbol{X}}^{n}(s-)))}(y)\,N_{k}^{n\varphi _{k}^{n}}(ds\,dy)  \notag
\\
& =\sum_{k=1}^{\infty }\frac{2}{n}\int_{[0,t]\times \lbrack 0,1]}{%
{1}}_{\{{\bar{X}}_{0}^{n}(s-)<0\}}{{1}}_{[0,r_{k}(\bar{\boldsymbol{X}}^{n}(s-)))}(y)\,N_{k}^{n\varphi _{k}^{n}}(ds\,dy).
\label{eq:etabar_n_upper}
\end{align}%
Here the last line follows on observing that ${{1}}_{\{{\bar{X}}_{0}^{n}(s-)<0\}}{{1}}_{[0,r_{0}(\bar{\boldsymbol{X}}^{n}(s-)))}(y)\equiv 0$.

Since $m_1 \doteq \sup_{n \in \mathbb{N}} \sum_{k=1}^\infty k \frac{n_k}{n}
< \infty$ by Assumption \ref{asp:exponential-boundN}, we have that for $t
\in [0,T]$, $-\frac{1}{n} \le {\bar{X}}^n_0(t) \le m_1$, $0 \le r(\bar{\boldsymbol{X}}%
^n(t)) \le m_1$ and $0 \le {\bar{X}}^n_k(t) \le \frac{n_k}{n}$. Also note
that both $r(\bar{\boldsymbol{X}}^n(\cdot))$ and ${\bar{X}}^n_k(\cdot)$ for $k \in 
\mathbb{N}$ are non-increasing.

The following lemma summarizes some elementary properties of $\ell$. For
part (a) we refer to \cite[Lemma 3.1]{BudhirajaDupuisGanguly2015moderate},
and part (b) is an easy calculation that is omitted.

\begin{Lemma}
\phantomsection
\label{lem:property_ell}

\begin{enumerate}[\upshape(a)] 

\item For each $\beta > 0$, there exists $\gamma(\beta) \in (0,\infty)$ such
that $\gamma(\beta) \to 0$ as $\beta \to \infty$ and $x \le \gamma(\beta)
\ell(x)$, for $x \ge \beta > 1$.

\item For $x \ge 0$, $x \le \ell(x) + 2$.
\end{enumerate}
\end{Lemma}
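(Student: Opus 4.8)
The plan is to treat both parts as elementary one-variable calculus facts about $\ell(x)=x\log x-x+1$, using only that $\ell'(x)=\log x$, that $\ell$ is convex ($\ell''(x)=1/x>0$), that $\ell(1)=0$, and that $\ell(x)>0$ for $x>0$, $x\ne 1$.

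For part (b), I would rewrite $\ell(x)+2-x=x\log x-2x+3=:g(x)$ and show $g\ge 0$ on $[0,\infty)$ (with the convention $0\log 0=0$). Since $g'(x)=\log x-1$ vanishes only at $x=e$ and $g''(x)=1/x>0$, the global minimum of $g$ on $[0,\infty)$ is $g(e)=e-2e+3=3-e>0$, so $g>0$ everywhere and (b) follows. This step is entirely routine.

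For part (a), the idea is to examine $\phi(x)\doteq x/\ell(x)$ on $(1,\infty)$ and show it is strictly decreasing, so that the right choice is $\gamma(\beta)\doteq\beta/\ell(\beta)$ for $\beta>1$ (the only relevant range given the hypothesis $x\ge\beta>1$; for $\beta\le 1$ one may just take $\gamma(\beta)=\gamma(2)$). The one point worth isolating is that $\phi'(x)$ has the sign of $\ell(x)-x\ell'(x)$, and this quantity collapses: $\ell(x)-x\ell'(x)=x\log x-x+1-x\log x=1-x<0$ for $x>1$. Hence $\phi$ is strictly decreasing, so for $x\ge\beta$ we get $x/\ell(x)=\phi(x)\le\phi(\beta)=\gamma(\beta)$, i.e.\ $x\le\gamma(\beta)\ell(x)$; moreover $\gamma(\beta)>0$ since $\ell(\beta)>0$, while $\ell(\beta)/\beta=\log\beta-1+1/\beta\to\infty$ forces $\gamma(\beta)\to 0$ as $\beta\to\infty$. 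If one prefers to sidestep the derivative computation, an alternative is $\gamma(\beta)\doteq\sup_{x\ge\beta}x/\ell(x)$, which is finite (by continuity of $\phi$ on $[\beta,\infty)$ together with $\phi(x)\to 0$ as $x\to\infty$), strictly positive, non-increasing in $\beta$, and tends to $0$; either route works, and of course one may also simply invoke \cite[Lemma 3.1]{BudhirajaDupuisGanguly2015moderate}.

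I do not expect a genuine obstacle in either part. The only mildly non-obvious step is the cancellation $\ell(x)-x\ell'(x)=1-x$ in part (a), which is exactly what makes the explicit choice $\gamma(\beta)=\beta/\ell(\beta)$ available and reduces the monotonicity claim to a single line rather than a messy estimate; everything else is bookkeeping.
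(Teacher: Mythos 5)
Your proof is correct, and it is arguably more informative than what the paper provides: the paper cites \cite[Lemma 3.1]{BudhirajaDupuisGanguly2015moderate} for (a) and dismisses (b) as an easy omitted calculation, whereas you fill in both. For (b), the computation $g(x)=x\log x-2x+3$, $g'(x)=\log x-1$, minimum $g(e)=3-e>0$ is exactly the routine check being alluded to; you could also note $g(0)=3$ and $g(x)\to\infty$, but the convexity argument already covers the boundary. For (a), the key algebraic collapse $\ell(x)-x\ell'(x)=1-x<0$ on $(1,\infty)$, giving strict monotonicity of $\phi(x)=x/\ell(x)$ and the explicit choice $\gamma(\beta)=\beta/\ell(\beta)$, is a clean way to obtain the statement and is essentially what the cited lemma records; your observation that $\gamma(\beta)=1/(\log\beta-1+1/\beta)\to 0$ is the right verification of the limit. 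The remark about defining $\gamma(\beta)$ arbitrarily for $\beta\le 1$ correctly handles the fact that the hypothesis ``$x\ge\beta>1$'' makes the claim vacuous there. No gaps.
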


The following lemma proves certain uniform integrability properties for the
control sequence $\boldsymbol{\varphi}^n$.

\begin{Lemma}
\label{lem:UI_upper} Suppose that Assumptions \ref{asp:convgN} and \ref%
{asp:exponential-boundN} hold. For $K\in \mathbb{N}$ define%
\begin{equation}
{\bar{U}}_{K}\doteq \sup_{n\in \mathbb{N}}{{E}}\left\{
\sum_{k=K}^{\infty }\int_{[0,T]\times \lbrack 0,1]}k\varphi _{k}^{n}(s,y){%
{1}}_{[0,r_{k}(\bar{\boldsymbol{X}}^{n}(s)))}(y)\,ds\,dy\right\} .
\label{eq:Ubar_K}
\end{equation}%
Then ${\bar{U}}_{K}<\infty $ for each $K\in \mathbb{N}$ and $%
\lim_{K\rightarrow \infty }{\bar{U}}_{K}=0$.
\end{Lemma}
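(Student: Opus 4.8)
\emph{Proof sketch.} The plan is to reduce the claim to a single-coordinate estimate obtained from a pathwise mass-balance identity, and then to sum over $k$. Since all integrands are nonnegative, monotone convergence permits interchanging the sum over $k\ge K$ with the expectation, so it suffices to prove the single-coordinate bound
\[
  {{E}}\!\left[\int_{[0,T]\times[0,1]}\varphi^{n}_{k}(s,y)\,1_{[0,r_{k}(\bar{\boldsymbol{X}}^{n}(s)))}(y)\,ds\,dy\right]\le \frac{n_{k}}{n}\qquad\text{for every }k,n\in\mathbb{N}.
\]
Granting this, multiplying by $k$ and summing over $k\ge K$ gives $\bar{U}_{K}\le\sup_{n}\sum_{k=K}^{\infty}k\,n_{k}/n$. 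For $k\ge K$ we have $k\le K^{-\varepsilon_{\boldsymbol{p}}}k^{1+\varepsilon_{\boldsymbol{p}}}$, so Assumption \ref{asp:exponential-boundN} yields $\bar{U}_{K}\le K^{-\varepsilon_{\boldsymbol{p}}}\sup_{n}\sum_{k=1}^{\infty}k^{1+\varepsilon_{\boldsymbol{p}}}\,n_{k}/n<\infty$ for every $K\in\mathbb{N}$, and the right-hand side tends to $0$ as $K\to\infty$, which is exactly the assertion.

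To prove the single-coordinate bound, the key input is the nonnegativity of $\bar{X}^{n}_{k}(\cdot)$. From \eqref{eq:Xbar_n_k_upper_temp}, the number of jumps of the $k$-th coordinate on $[0,T]$ satisfies
\[
  \int_{[0,T]\times[0,1]}1_{[0,r_{k}(\bar{\boldsymbol{X}}^{n}(s-)))}(y)\,N_{k}^{n\varphi_{k}^{n}}(ds\,dy)=n\bigl(\bar{X}^{n}_{k}(0)-\bar{X}^{n}_{k}(T)\bigr)\le n\bar{X}^{n}_{k}(0)=n_{k}\quad\text{a.s.}
\]
To turn this almost-sure pathwise bound into a statement about the compensator, I would introduce a truncation: fix $M<\infty$, set $\varphi^{n,M}_{k}\doteq\varphi^{n}_{k}\wedge M$, and observe $N_{k}^{n\varphi^{n,M}_{k}}\le N_{k}^{n\varphi^{n}_{k}}$ as measures, so the preceding bound holds a fortiori with $\varphi^{n}_{k}$ replaced on the left by $\varphi^{n,M}_{k}$. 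The counting process $N_{k}^{n\varphi^{n,M}_{k}}$ has $\{\mathcal{F}_{t}\}$-predictable compensator $n\varphi^{n,M}_{k}(s,y)\,ds\,dy$, which is bounded by $nMT$ on $[0,T]\times[0,1]$; since the predictable integrand $1_{[0,r_{k}(\bar{\boldsymbol{X}}^{n}(s-)))}(y)$ is bounded by $1$, the process $t\mapsto\int_{[0,t]\times[0,1]}1_{[0,r_{k}(\bar{\boldsymbol{X}}^{n}(s-)))}(y)\bigl(N_{k}^{n\varphi^{n,M}_{k}}(ds\,dy)-n\varphi^{n,M}_{k}(s,y)\,ds\,dy\bigr)$ is a genuine martingale. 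Evaluating at $t=T$, taking expectations, invoking the truncated pathwise bound, dividing by $n$, and finally letting $M\to\infty$ by monotone convergence yields the single-coordinate bound.

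The step I expect to demand the most care is the martingale argument just above: one must verify that $N_{k}^{n\varphi^{n,M}_{k}}$ does have the stated predictable compensator and that the associated compensated stochastic integral is a true martingale (not merely a local one) --- this is exactly what the truncation by $M$ buys. The remaining pieces (the mass-balance identity, the two monotone-convergence interchanges, and the tail estimate from Assumption \ref{asp:exponential-boundN}) are routine; note in particular that the a.s.\ cost bound \eqref{eq:cost_bd_upper} is not used, the estimate being purely structural.
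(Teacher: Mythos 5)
Your argument is correct and follows the same route as the paper's: both reduce to the mass-balance identity $\bar B^n_k(T)=\bar X^n_k(0)-\bar X^n_k(T)\le n_k/n$ via nonnegativity of $\bar X^n_k$, then pass from the counting-measure integral to the compensator integral and invoke Assumption \ref{asp:exponential-boundN} for the tail sum. The only difference is that you make explicit the truncation/martingale argument that justifies $E[\hat B^n_k(T)]=E[\bar B^n_k(T)]$, a step the paper treats as immediate from \eqref{eq:Bbar_n_k_upper} and \eqref{eq:Xbar_n_k_upper}.
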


\begin{proof}
	From \eqref{eq:Bbar_n_k_upper} and \eqref{eq:Xbar_n_k_upper} it follows that
	\begin{equation*}
		{\bar{U}}_K = \sup_{n \in \mathbb{N}} {{E}} \left\{ \sum_{k=K}^\infty k {\bar{B}}^n_k(T) \right\} = \sup_{n \in \mathbb{N}} {{E}} \left\{ \sum_{k=K}^\infty k \left[ {\bar{X}}^n_k(0) - {\bar{X}}^n_k(T) \right] \right\} \le \sup_{n \in \mathbb{N}} \sum_{k=K}^\infty \frac{kn_k}{n}.
	\end{equation*}
	Recalling $\varepsilon_{\boldsymbol{p}} \in (0,\infty)$ introduced in Assumption \ref{asp:exponential-boundN}, we have
	\begin{equation*}
		\sup_{n \in \mathbb{N}} \sum_{k=K}^\infty \frac{kn_k}{n} \le 
		K^{-\varepsilon_{\boldsymbol{p}}} \sup_{n \in \mathbb{N}} \sum_{k=1}^\infty \frac{n_k}{n} k^{1+ \varepsilon_{\boldsymbol{p}}} \to 0
	\end{equation*}
	as $K \to \infty$.
	The result follows.
%
\end{proof}

The following lemma proves some key tightness properties. Write $\bar{\boldsymbol{B}}^n
\doteq ({\bar{B}}^n_k)_{k \in \mathbb{N}_0}$.
Define $\bar{\boldsymbol{\nu}}^{n}\doteq (\bar{\nu}_{k}^{n})_{k\in 
\mathbb{N}_{0}}$, where for $k\in \mathbb{N}_{0}$,
\begin{equation}
\bar{\nu} _{k}^{n}([0,t]\times A)\doteq \int_{\lbrack 0,t]\times
A}\varphi _{k}^{n}(s,y)\,ds\,dy,\quad t\in \lbrack 0,T],A\in \mathcal{B}%
([0,1]).  \label{eq:nu_n_upper}
\end{equation}%

\begin{Lemma}
\label{lem:tightness} Suppose Assumptions \ref{asp:convgN} and \ref%
{asp:exponential-boundN} hold and the bound in \eqref{eq:cost_bd_upper} is
satisfied. Then the sequence of random variables $\{(\bar{\boldsymbol{\nu}}^{n}, \bar{\boldsymbol{X}}^n, {\bar{Y}}^n, \bar{\boldsymbol{B}}^n, {\bar{\eta}}^n)\}$ is tight in $[%
\mathcal{M}([0,T]\times[0,1])]^\infty \times \mathcal{D}_\infty \times 
\mathcal{D} \times \mathcal{D}_\infty \times \mathcal{D}$.
\end{Lemma}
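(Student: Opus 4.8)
The plan is to establish $\mathcal{C}$-tightness of each of the five components separately. Since tightness of finitely many marginal sequences implies tightness of the tuple in the product space, and since $[\mathcal{M}([0,T]\times[0,1])]^\infty$ and $\mathcal{D}_\infty$ are countable products for which (with $\mathcal{C}$-tightness reducing the limit points to the product space $\prod_k\mathbb{C}([0,T]:\mathbb{R})$) tightness can be verified coordinate by coordinate, it suffices to treat one coordinate at a time. The coordinate $\bar{\boldsymbol{\nu}}^n$ is the easiest: by Lemma \ref{lem:property_ell}(b), $\varphi_k^n\le\ell(\varphi_k^n)+2$, so integrating over $[0,T]\times[0,1]$ (of Lebesgue measure $T$) and using \eqref{eq:cost_bd_upper} gives $\bar\nu_k^n([0,T]\times[0,1])=\int_{[0,T]\times[0,1]}\varphi_k^n\,ds\,dy\le M_0+2T$ a.s., uniformly in $n,k$. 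Since $[0,T]\times[0,1]$ is compact, the set of measures of total mass at most $M_0+2T$ is compact in $\mathcal{M}([0,T]\times[0,1])$, so $\{\bar{\boldsymbol{\nu}}^n\}$ takes values a.s.\ in a fixed compact subset of the countable product, hence is tight.

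The heart of the argument is $\mathcal{C}$-tightness of $\{\bar B_k^n\}$ for each $k\in\mathbb{N}_0$. Let $\bar A_k^n(t)\doteq\int_{[0,t]\times[0,1]}1_{[0,r_k(\bar{\boldsymbol{X}}^n(s)))}(y)\,\varphi_k^n(s,y)\,ds\,dy$ be the predictable compensator of $\bar B_k^n$ (see \eqref{eq:Bbar_n_k_upper}), so that $M_k^n\doteq\bar B_k^n-\bar A_k^n$ is an $\{\mathcal{F}_t\}$-martingale. The key point is that the superlinear growth of $\ell$ together with the a.s.\ bound \eqref{eq:cost_bd_upper} gives, by a de la Vall\'ee Poussin estimate, uniform equicontinuity of the compensators: for $c>1$ and measurable $E\subseteq[0,T]\times[0,1]$,
\begin{equation*}
\int_E\varphi_k^n\,ds\,dy\le c\,|E|+\Big(\inf_{x>c}\tfrac{\ell(x)}{x}\Big)^{-1}\int_E\ell(\varphi_k^n)\,ds\,dy\le c\,|E|+\Big(\inf_{x>c}\tfrac{\ell(x)}{x}\Big)^{-1}M_0,
\end{equation*}
and since $\ell(x)/x=\log x-1+1/x\to\infty$, given $\epsilon>0$ one may first choose $c$ and then $\delta>0$ so that $\int_E\varphi_k^n\le\epsilon$ whenever $|E|\le\delta$, uniformly in $n,k$ and a.s. Taking $E=[s,t]\times[0,1]$ yields $\bar A_k^n(t)-\bar A_k^n(s)\le\epsilon$ for $t-s\le\delta$; together with $\bar A_k^n(0)=0$ and $\bar A_k^n(T)\le M_0+2T$, Arzel\`a--Ascoli shows $\{\bar A_k^n\}$ is a.s.\ relatively compact in $\mathbb{C}([0,T]:\mathbb{R})$, hence $\mathcal{C}$-tight. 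Since $\bar B_k^n$ jumps only by $1/n$ while $\bar A_k^n$ is continuous, the optional quadratic variation of $M_k^n$ is $[M_k^n]_t=\frac1n\bar B_k^n(t)$, so Doob's inequality gives $\mathbb{E}\sup_{t\le T}|M_k^n(t)|^2\le 4\,\mathbb{E}[M_k^n]_T=\frac4n\mathbb{E}[\bar B_k^n(T)]=\frac4n\mathbb{E}[\bar A_k^n(T)]\le 4(M_0+2T)/n\to0$ (for $k\in\mathbb{N}$ one may alternatively use $\bar B_k^n(T)\le n_k/n$ directly). Thus $M_k^n\to0$ uniformly in probability and $\{\bar B_k^n\}$ is $\mathcal{C}$-tight.

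Finally I would assemble the remaining processes from \eqref{eq:Ybar_n_upper}--\eqref{eq:etabar_n_upper}. For $k\in\mathbb{N}$, $\bar X_k^n=\bar X_k^n(0)-\bar B_k^n$ is $\mathcal{C}$-tight. For $\bar Y^n=\bar X_0^n(0)+\sum_{k=0}^\infty(k-2)\bar B_k^n$ I would split the series at level $K\ge2$: the partial sum $\sum_{k=0}^K(k-2)\bar B_k^n$ is a finite linear combination of $\mathcal{C}$-tight sequences, while the tail is controlled uniformly in $n$ using that each $\bar B_k^n$ is non-decreasing and $\bar B_k^n(T)=\bar X_k^n(0)-\bar X_k^n(T)\le n_k/n$, so that $\sup_{t\le T}\sum_{k>K}(k-2)\bar B_k^n(t)=\sum_{k>K}(k-2)\bar B_k^n(T)\le\sup_n\sum_{k>K}k\,\tfrac{n_k}{n}\to0$ as $K\to\infty$ by Assumption \ref{asp:exponential-boundN}; this gives $\mathcal{C}$-tightness of $\{\bar Y^n\}$. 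The regulator $\bar\eta^n$ is non-decreasing with $\bar\eta^n(0)=0$, and dropping the indicator $1_{\{\bar X_0^n(s-)<0\}}$ in \eqref{eq:etabar_n_upper} shows $\bar\eta^n(t)-\bar\eta^n(s)\le 2\sum_{k\ge1}(\bar B_k^n(t)-\bar B_k^n(s))$ for $s\le t$; since $\sum_{k\ge1}\bar B_k^n$ is $\mathcal{C}$-tight by the same splitting (with $\bar\eta^n(T)\le 2\sum_{k\ge1}n_k/n=2$), it follows that $\{\bar\eta^n\}$ is $\mathcal{C}$-tight. Then $\bar X_0^n=\bar Y^n+\bar\eta^n$ is $\mathcal{C}$-tight (see \eqref{eq:Xbar_n_0_upper}), so $\bar{\boldsymbol{X}}^n$ is $\mathcal{C}$-tight in $\mathcal{D}_\infty$, and the claimed joint tightness follows.

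I expect the compensator equicontinuity step to be the main obstacle: because the controls $\varphi_k^n$ are only $L\log L$-bounded and not uniformly bounded, one cannot estimate the time derivative of $\bar A_k^n$ directly, and the uniform-integrability (de la Vall\'ee Poussin) estimate above is exactly what substitutes for a Lipschitz bound. The other delicate point is ensuring that the tail estimates for $\bar Y^n$ and $\bar\eta^n$ are uniform in $n$, where Assumptions \ref{asp:convgN}--\ref{asp:exponential-boundN}, the pathwise bound $\bar B_k^n(T)\le n_k/n$, and the monotonicity of $\bar B_k^n$ and $\bar X_k^n$ are essential; once these are in hand the remaining steps are routine applications of standard tightness criteria and the fact that all jumps are $O(1/n)$.
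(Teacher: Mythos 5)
Your proof is correct, and it takes a genuinely different route from the paper's. The paper verifies tightness of $({\bar{Y}}^n,\bar{\boldsymbol{B}}^n,{\bar{\eta}}^n)$ \emph{jointly} via the Aldous--Kurtz stopping-time criterion: it controls $E\big[\sum_{k}(k+2)|{\bar{B}}^n_k(\tau+\delta)-{\bar{B}}^n_k(\tau)|+|{\bar{\eta}}^n(\tau+\delta)-{\bar{\eta}}^n(\tau)|\big]$ by a double truncation, using Lemma \ref{lem:property_ell}(a) (with the quantity $\gamma(M)$) for large controls and Lemma \ref{lem:UI_upper} (the expectation bound ${\bar{U}}_K$) for large $k$, then passes $M,K\to\infty$. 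You instead work coordinate-wise, split each ${\bar{B}}^n_k$ into its predictable compensator ${\hat{B}}^n_k$ plus the martingale ${\tilde{B}}^n_k$ (the decomposition the paper introduces only afterward, in Lemma \ref{lem:char_limit}(c)), get a \emph{deterministic} modulus of continuity for the compensators via the de la Vall\'ee Poussin inequality and the a.s.\ bound \eqref{eq:cost_bd_upper}, and kill the martingales with the Doob/It\^o-isometry computation $E\sup_t|{\tilde{B}}^n_k(t)|^2\le\frac{4}{n}E{\bar{B}}^n_k(T)\to 0$; the tail of the series defining ${\bar{Y}}^n$ and ${\bar{\eta}}^n$ is handled by the pathwise bound ${\bar{B}}^n_k(T)\le n_k/n$ together with Assumption \ref{asp:exponential-boundN}, which is a slightly stronger (a.s.\ rather than in-expectation) version of what the paper's Lemma \ref{lem:UI_rate}/\ref{lem:UI_upper} provide. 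What your route buys is conceptual cleanliness: the compensators live a.s.\ in a fixed Arzel\`a--Ascoli compact of $\mathbb{C}([0,T])$ determined only by $M_0$ and $T$, so no stopping-time gymnastics are needed, and $\mathcal C$-tightness of the limits is automatic. What the paper's route buys is economy: it never forms the martingale decomposition at this stage and only needs expectation-level control. Both arguments hinge on exactly the same two ingredients — superlinearity of $\ell$ and the non-increasing property of ${\bar{X}}^n_k$ giving ${\bar{B}}^n_k(T)\le n_k/n$ — so the difference is genuinely one of packaging rather than substance.
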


\begin{proof}
	We will argue the tightness of $\{\bar{\boldsymbol{\nu}}^{n}\}$ in $\mathcal{M}([0,T]\times[0,1])]^\infty$ and the $\mathcal{C}$-tightness of $\{{\bar{\boldsymbol{X}}}^n\}$, $\{{\bar{Y}}^n\}$, $\{{\bar{\boldsymbol{B}}}^n\}$, and
	$\{{\bar{\eta}}^n\}$ in $\mathcal{D}_\infty$, $\mathcal{D}$, $\mathcal{D}_\infty$, and $\mathcal{D}$ respectively.
	This will complete the proof.
	
	Consider first $\{\bar{\boldsymbol{\nu}}^{n}\}$. 
	Note that $[0,T] \times [0,1]$ is a compact metric space.
	Also from Lemma \ref{lem:property_ell}(b) and \eqref{eq:cost_bd_upper} we have a.s.\ for each $k \in \mathbb{N}_0$,
	\begin{equation*}
		\bar{\nu}^{n}_k([0,T]\times[0,1]) = \int_{[0,T] \times [0,1]} \varphi_k^n(s,y) \,ds\,dy \le \int_{[0,T] \times [0,1]} \left( \ell(\varphi_k^n(s,y))+2 \right) ds\,dy \le M_0 + 2T.
	\end{equation*}
	Hence $\{\bar{\nu}^{n}_k\}$ is tight in $\mathcal{M}([0,T]\times[0,1])$.	
	
	Next, since ${\bar{X}}^n_k(0) \in [0,1]$ for $k \in \mathbb{N}$ a.s.,  we see from \eqref{eq:Xbar_n_0_upper} and \eqref{eq:Xbar_n_k_upper} that $\mathcal{C}$-tightness of $\{{\bar{X}}^n\}$ in $\mathcal{D}_\infty$ follows once we show $\mathcal{C}$-tightness of $\{{\bar{Y}}^n\}$, $\{{\bar{\boldsymbol{B}}}^n\}$ and  $\{{\bar{\eta}}^n\}$.

	We now show that $\{({\bar{Y}}^n(t), {\bar{\boldsymbol{B}}}^n(t), {\bar{\eta}}^n(t)))\}$ is tight for each $t \in [0,T]$.
	From \eqref{eq:Ybar_n_upper}, \eqref{eq:Bbar_n_k_upper} and \eqref{eq:etabar_n_upper} we have
	\begin{align*}
		& {{E}} \left[ |{\bar{Y}}^n(t)| + \sum_{k=0}^\infty |{\bar{B}}^n_k(t)| + |{\bar{\eta}}^n(t)| \right] \\
		& \le \frac{1}{n}+ \sum_{k=0}^\infty [|k-2|+1] {{E}}|{\bar{B}}^n_k(t)| + {{E}}|{\bar{\eta}}^n(t)| \\
		& \le \frac{1}{n}+{{E}} \sum_{k=0}^\infty \int_{[0,T] \times [0,1]} [|k-2| + 1 + 2 \cdot {{1}}_{\{k\ge 1\}}] \varphi^n_k(s,y) {{1}}_{[0,r_k(\bar{\boldsymbol{X}}^n(s)))}(y) \,ds\,dy \\
		& \le \frac{1}{n}+ 3 {{E}} \int_{[0,T] \times [0,1]} \varphi^n_0(s,y) \,ds\,dy + 4{\bar{U}}_1,
	\end{align*}
	where the last line uses the definition of ${\bar{U}}_1$ in \eqref{eq:Ubar_K}.
	From Lemma \ref{lem:property_ell}(b) and \eqref{eq:cost_bd_upper}, we have
	\begin{equation*}
		{{E}} \int_{[0,T] \times [0,1]} \varphi^n_0(s,y) \,ds\,dy \le {{E}} \int_{[0,T] \times [0,1]} \left[ \ell(\varphi^n_0(s,y))+2 \right] ds\,dy \le M_0 + 2T.
	\end{equation*}
	Therefore $\sup_{n \in \mathbb{N}} {{E}} \left[ |{\bar{Y}}^n(t)| + \sum_{k=0}^\infty |{\bar{B}}^n_k(t)| + |{\bar{\eta}}^n(t)| \right] < \infty$ and we have tightness of $\{({\bar{Y}}^n(t), \bar{\boldsymbol{B}}^n(t), {\bar{\eta}}^n(t)))\}$ in $\mathbb{R} \times \mathbb{R}^\infty \times \mathbb{R}$ for each $t \in [0,T]$.
	
	We now consider fluctuations of $({\bar{Y}}^n, \bar{\boldsymbol{B}}^n, {\bar{\eta}}^n)$.
	Recall the filtration $\{\mathcal{F}_t\}_{0 \le t \le T}$.
	For $\delta \in [0,T]$, let $\mathcal{T}^\delta$ be the collection of all $[0,T-\delta]$-valued stopping times $\tau$.
	Note that for $\tau \in \mathcal{T}^\delta$,
	\begin{equation*}
		{{E}} |{\bar{Y}}^n(\tau+\delta) - {\bar{Y}}^n(\tau)| \le {{E}} \left[ \sum_{k=0}^\infty (k+2) \left| {\bar{B}}^n_k(\tau+\delta) - {\bar{B}}^n_k(\tau) \right|  \right].
	\end{equation*}
	Thus in order to argue tightness of $\{({\bar{Y}}^n, \bar{\boldsymbol{B}}^n, {\bar{\eta}}^n)\}$, by the Aldous--Kurtz tightness criterion (cf.\ \cite[Theorem 2.7]{Kurtz1981approximation}) it suffices to show that
	\begin{equation}
		\label{eq:tightness_upper}
		\limsup_{\delta \to 0} \limsup_{n \to \infty} \sup_{\tau \in \mathcal{T}^\delta} {{E}} \left[ \sum_{k=0}^\infty (k+2) \left| {\bar{B}}^n_k(\tau+\delta) - {\bar{B}}^n_k(\tau) \right| + \left| {\bar{\eta}}^n(\tau+\delta) - {\bar{\eta}}^n(\tau) \right| \right] = 0.
	\end{equation}
	From \eqref{eq:Bbar_n_k_upper} and \eqref{eq:etabar_n_upper} it follows that for every $K \in \mathbb{N}$ and $M \in (0,\infty)$,
	\begin{align*}
		& {{E}} \left[ \sum_{k=0}^\infty (k+2) \left| {\bar{B}}^n_k(\tau+\delta) - {\bar{B}}^n_k(\tau) \right| + \left| {\bar{\eta}}^n(\tau+\delta) - {\bar{\eta}}^n(\tau) \right| \right] \\
		& \le {{E}} \sum_{k=0}^\infty \int_{(\tau,\tau+\delta] \times [0,1]} (k+4) \varphi^n_k(s,y)  {{1}}_{[0,r_k(\bar{\boldsymbol{X}}^n(s)))}(y) \,ds\,dy \\
		& \le {{E}} \sum_{k=0}^{K-1} \left[ \int_{(\tau,\tau+\delta] \times [0,1]} (k+4) \varphi^n_k(s,y) {{1}}_{\{ \varphi^n_k(s,y) > M\}}  \,ds\,dy \right. \\
		& \qquad \left. + \int_{(\tau,\tau+\delta] \times [0,1]} (k+4) \varphi^n_k(s,y) {{1}}_{\{ \varphi^n_k(s,y) \le M\}}  \,ds\,dy \right] + 5 {\bar{U}}_K.
	\end{align*}
	Using Lemma \ref{lem:property_ell}(a) and \eqref{eq:cost_bd_upper}, we can bound the last display by
	\begin{align*}
		& {{E}} \sum_{k=0}^{K-1} \int_{(\tau,\tau+\delta] \times [0,1]} (K+3)\gamma(M) \ell(\varphi^n_k(s,y)) \,ds\,dy + K(K+3)M\delta + 5 {\bar{U}}_K \\
		& \quad \le (K+3)\gamma(M) M_0 + K(K+3)M\delta + 5 {\bar{U}}_K.
	\end{align*}
	Therefore
	\begin{align*}
		& \limsup_{\delta \to 0} \limsup_{n \to \infty} \sup_{\tau \in \mathcal{T}^\delta} {{E}} \left[ \sum_{k=0}^\infty (k+2) \left| {\bar{B}}^n_k(\tau+\delta) - {\bar{B}}^n_k(\tau) \right| + \left| {\bar{\eta}}^n(\tau+\delta) - {\bar{\eta}}^n(\tau) \right| \right] \\
		& \quad \le (K+3)\gamma(M) M_0 + 5 {\bar{U}}_K.
	\end{align*}
	Taking $M \to \infty$ and then $K \to \infty$, we have from Lemma \ref{lem:property_ell}(a) and Lemma \ref{lem:UI_upper} that \eqref{eq:tightness_upper} holds.
	Finally $\mathcal{C}$-tightness is an immediate consequence of the following a.s.\ bounds for any $k \in \mathbb{N}_0$ and $K \in \mathbb{N}$
	\begin{align*}
		|{\bar{B}}^n_k(t)- {\bar{B}}^n_k(t-)|  \le \frac{1}{n}, \;
		|{\bar{\eta}}^n(t)- {\bar{\eta}}^n(t-)|  \le \frac{2}{n}, \;
		|{\bar{Y}}^n_k(t)- {\bar{Y}}^n_k(t-)|  \le \frac{K}{n} + \sum_{j=K+1}^\infty \frac{jn_j}{n}
	\end{align*} 
	and Assumption \ref{asp:exponential-boundN}.
	This completes the proof.
\end{proof}

Next we will characterize weak limit points of $\{(\bar{\boldsymbol{\nu}} ^{n},\bar{\boldsymbol{X}}^{n},{\bar{Y}}^{n},\bar{\boldsymbol{B}}^{n},{\bar{\eta}}^{n})\}$. For that, we
need the following notation. For $k\in \mathbb{N}_{0}$ define the
compensated process 
\begin{equation*}
{\tilde{N}}_{k}^{n\varphi _{k}^{n}}(ds\,dy)\doteq N_{k}^{n\varphi
_{k}^{n}}(ds\,dy)-n\varphi _{k}^{n}(s,y)\,ds\,dy.
\end{equation*}%
Then ${\tilde{N}}_{k}^{n\varphi _{k}^{n}}([0,t]\times A)$ is an $\{\mathcal{F%
}_{t}\}$-martingale for $A\in \mathcal{B}([0,1])$ and $k\in \mathbb{N}_{0}$.
For $t\in \lbrack 0,T]$ and $k\in \mathbb{N}_{0}$ write%
\begin{equation}
{\bar{B}}_{k}^{n}(t)={\tilde{B}}_{k}^{n}(t)+{\hat{B}}_{k}^{n}(t),
\label{eq:Bbar_n_k_upper_decomp}
\end{equation}%
where 
\begin{equation*}
{\tilde{B}}_{k}^{n}(t)\doteq \frac{1}{n}\int_{[0,t]\times \lbrack 0,1]}{%
{1}}_{[0,r_{k}(\bar{\boldsymbol{X}}^{n}(s-)))}(y)\,{\tilde{N}}_{k}^{n\varphi
_{k}^{n}}(ds\,dy)
\end{equation*}%
is an $\{\mathcal{F}_{t}\}$-martingale and 
\begin{equation*}
{\hat{B}}_{k}^{n}(t)\doteq \int_{\lbrack 0,t]\times \lbrack 0,1]}{%
{1}}_{[0,r_{k}(\bar{\boldsymbol{X}}^{n}(s)))}(y)\varphi
_{k}^{n}(s,y)\,ds\,dy.
\end{equation*}%
Write $\tilde{\boldsymbol{B}}^n \doteq ({\tilde{B}}^n_k)_{k \in \mathbb{N}_0}$ and $\hat{\boldsymbol{B}}^n \doteq ({\hat{B}}^n_k)_{k \in \mathbb{N}_0}$.
For $t\in \lbrack 0,T]$, let $\lambda _{t}$ be the Lebesgue measure on $%
[0,t]\times \lbrack 0,1]$.

We have the following characterization of the weak limit points.

\begin{Lemma}
\label{lem:char_limit} Suppose Assumptions \ref{asp:convgN} and \ref%
{asp:exponential-boundN} hold. Also assume that the bound %
\eqref{eq:cost_bd_upper} is satisfied and suppose that $(\bar{\boldsymbol{\nu}}^{n}, \bar{\boldsymbol{X}}^n, {\bar{Y}}^n, \bar{\boldsymbol{B}}^n, {\bar{\eta}}^n)$ converges along a
subsequence, in distribution, to $(\bar{\boldsymbol{\nu}}, \bar{\boldsymbol{X}}, {\bar{Y}}, \bar{\boldsymbol{B}}, {%
\bar{\eta}}) \in [\mathcal{M}([0,T]\times[0,1])]^\infty \times \mathcal{D}%
_\infty \times \mathcal{D} \times \mathcal{D}_\infty \times \mathcal{D}$
given on some probability space $(\Omega^*,\mathcal{F}^*,{P}^*)$%
. Then the following holds ${P}^*$-a.s.

\begin{enumerate}[\upshape(a)]

\item For each $k \in \mathbb{N}_0$, $\bar{\nu}_k \ll \lambda_T$.

\item $(\bar{\boldsymbol{X}},{\bar{Y}},\bar{\boldsymbol{B}},{\bar{\eta}})\in \mathcal{C}_{\infty
}\times \mathcal{C}\times \mathcal{C}_{\infty }\times \mathcal{C}$, and for $%
t\in \lbrack 0,T]$%
\begin{align}
{\bar{X}}_{k}(t)& =p_{k}-{\bar{B}}_{k}(t)\geq 0,k\in \mathbb{N},
\label{eq:Xbar_k_upper} \\
{\bar{Y}}(t)& =\sum_{k=0}^{\infty }(k-2){\bar{B}}_{k}(t),
\label{eq:Ybar_upper} \\
{\bar{X}}_{0}(t)& ={\bar{Y}}(t)+{\bar{\eta}}(t)\geq 0.
\label{eq:Xbar_0_upper}
\end{align}

\item For $k\in \mathbb{N}_{0}$ let%
\begin{equation*}
\varphi _{k}(s,y)\doteq \frac{d\bar{\nu} _{k}}{d\lambda _{T}}(s,y),\;(s,y)\in
\lbrack 0,T]\times \lbrack 0,1].
\end{equation*}%
Then for $t\in \lbrack 0,T]$ and $k\in \mathbb{N}_{0}$%
\begin{equation}
\label{eq:Bbar_k_upper}
{\bar{B}}_{k}(t)=\int_{[0,t]\times \lbrack 0,1]}{{1}}_{[0,r_{k}(\bar{\boldsymbol{X}}(s)))}(y)\,\varphi _{k}(s,y)ds\,dy.
\end{equation}

\item ${\bar{X}}_{0}=\Gamma ({\bar{Y}})$. In particular, $(\bar{\boldsymbol{X}},{\bar{Y}%
})\in \mathcal{C}_{T}$ and $\boldsymbol{\varphi} \in \mathcal{S}_{T}(\bar{\boldsymbol{X}},{\bar{Y}})$%
.
\end{enumerate}
\end{Lemma}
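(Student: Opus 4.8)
The plan is to pass to an almost sure setting via the Skorokhod representation theorem, so that along the given subsequence $(\bar{\boldsymbol{\nu}}^n,\bar{\boldsymbol{X}}^n,{\bar Y}^n,\bar{\boldsymbol{B}}^n,{\bar\eta}^n)\to(\bar{\boldsymbol{\nu}},\bar{\boldsymbol{X}},{\bar Y},\bar{\boldsymbol{B}},{\bar\eta})$ almost surely; by the $\mathcal{C}$-tightness established in Lemma~\ref{lem:tightness} the limit has continuous coordinates, so the convergence of the $\mathcal{D}$-valued parts is uniform on $[0,T]$. For part~(a), Lemma~\ref{lem:property_ell}(a) gives, for $\beta>1$ and any Borel $A\subseteq[0,T]\times[0,1]$, $\bar\nu_k^n(A)=\int_A\varphi_k^n\,ds\,dy\le\beta\lambda_T(A)+\gamma(\beta)\int_A\ell(\varphi_k^n)\,ds\,dy\le\beta\lambda_T(A)+\gamma(\beta)M_0$ by \eqref{eq:cost_bd_upper}; taking $A$ open, applying the Portmanteau theorem together with $\bar\nu_k^n\Rightarrow\bar\nu_k$, and then outer regularity of $\lambda_T$, extends this bound to all Borel $A$, and letting $\lambda_T(A)\downarrow0$ and then $\beta\uparrow\infty$ yields $\bar\nu_k\ll\lambda_T$; set $\varphi_k\doteq d\bar\nu_k/d\lambda_T$.

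For the elementary relations in part~(b): taking limits in \eqref{eq:Xbar_n_k_upper} with ${\bar X}_k^n(0)=n_k/n\to p_k$ and ${\bar X}_k^n\ge0$ gives ${\bar X}_k=p_k-{\bar B}_k\ge0$ (so ${\bar B}_k\le p_k$) for $k\in\mathbb{N}$; taking limits in \eqref{eq:Xbar_n_0_upper} gives ${\bar X}_0={\bar Y}+{\bar\eta}$ with ${\bar X}_0\ge0$ from ${\bar X}_0^n\ge-1/n$; and taking limits in \eqref{eq:Ybar_n_upper} gives ${\bar Y}=\sum_{k=0}^\infty(k-2){\bar B}_k$, where the infinite sum is handled by truncating at $K$, the finite part converging termwise and the tail $\sum_{k\ge K}(k-2){\bar B}_k^n(t)\le\sum_{k\ge K}k\,n_k/n$ (by monotonicity of ${\bar B}_k^n$ in $t$ and ${\bar B}_k^n(T)\le n_k/n$) vanishing as $K\to\infty$ uniformly in $n$ by Assumption~\ref{asp:exponential-boundN}, and likewise for the limit. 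The absolute continuity and monotonicity claims in part~(b) will follow once part~(c) is in hand.

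Part~(c) is the core. Using \eqref{eq:Bbar_n_k_upper_decomp}, the predictable quadratic variation of the martingale $\tilde B_k^n$ equals $\tfrac1n\int_{[0,t]\times[0,1]}1_{[0,r_k(\bar{\boldsymbol{X}}^n(s-)))}(y)\varphi_k^n(s,y)\,ds\,dy=\tfrac1n{\hat B}_k^n(t)$, with expectation at most $\tfrac{n_k}{n^2}$ for $k\in\mathbb{N}$ (using ${\hat B}_k^n(T)$ has the same mean as ${\bar B}_k^n(T)={\bar X}_k^n(0)-{\bar X}_k^n(T)$) and at most $\tfrac1n(M_0+2T)$ for $k=0$ (using $\varphi_0^n\le\ell(\varphi_0^n)+2$ and \eqref{eq:cost_bd_upper}); Doob's inequality then gives $\tilde B_k^n\to0$ in $L^2$ uniformly on $[0,T]$, so ${\bar B}_k$ is the limit in probability of ${\hat B}_k^n(\cdot)$. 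It remains to show ${\hat B}_k^n(t)=\int_{[0,t]\times[0,1]}1_{[0,r_k(\bar{\boldsymbol{X}}^n(s)))}(y)\,\bar\nu_k^n(ds\,dy)\to\int_{[0,t]\times[0,1]}1_{[0,r_k(\bar{\boldsymbol{X}}(s)))}(y)\,\varphi_k(s,y)\,ds\,dy$. Write $\tau_{\bar{\boldsymbol{X}}}\doteq\inf\{s\in[0,T]:r(\bar{\boldsymbol{X}}(s))=0\}\wedge T$. For $t<\tau_{\bar{\boldsymbol{X}}}$ one has $r(\bar{\boldsymbol{X}}(s))>0$ on $[0,t]$, and combining coordinatewise uniform convergence $\bar{\boldsymbol{X}}^n\to\bar{\boldsymbol{X}}$ with the uniform tail bounds $\sup_n\sum_{j\ge K}j{\bar X}_j^n\le\sum_{j\ge K}j\,n_j/n\to0$ and $\sum_{j\ge K}jp_j\to0$ gives $r(\bar{\boldsymbol{X}}^n)\to r(\bar{\boldsymbol{X}})$ and $r_k(\bar{\boldsymbol{X}}^n)\to r_k(\bar{\boldsymbol{X}})$ uniformly on $[0,t]$; since $(s,y)\mapsto1_{[0,r_k(\bar{\boldsymbol{X}}(s)))}(y)$ is continuous on $[0,t]\times[0,1]$ off the graph $\{y=r_k(\bar{\boldsymbol{X}}(s))\}$, a $\lambda_T$-null and hence (by part~(a)) $\bar\nu_k$-null set, a generalized continuous-mapping argument then yields the claim for $t<\tau_{\bar{\boldsymbol{X}}}$. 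To extend to all $t\le T$: from $r(\bar{\boldsymbol{X}}(\tau_{\bar{\boldsymbol{X}}}))=0$ and ${\bar X}_j\ge0$ one gets ${\bar X}_j(\tau_{\bar{\boldsymbol{X}}})=0$ for $j\in\mathbb{N}$, so ${\bar B}_j(T)-{\bar B}_j(t)={\bar X}_j(t)-{\bar X}_j(T)\le{\bar X}_j(t)$ shows ${\bar B}_j$ is constant on $[\tau_{\bar{\boldsymbol{X}}},T]$, matching the right-hand side which is also constant there since $r_k(\bar{\boldsymbol{X}}(s))=0$, and the passage $t\uparrow\tau_{\bar{\boldsymbol{X}}}$ is by monotone convergence and continuity of ${\bar B}_k$. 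For $k=0$, where there is no such depletion relation, one argues instead with $\tau_n^\varepsilon\doteq\inf\{s:r(\bar{\boldsymbol{X}}^n(s))\le\varepsilon\}$: since $r(\bar{\boldsymbol{X}}^n(\cdot))$ is non-increasing and every accepted EEA transition decreases it by at least $1/n$, the number of accepted $N_0^{n\varphi_0^n}$-transitions in $(\tau_n^\varepsilon,T]$ is at most $n\,r(\bar{\boldsymbol{X}}^n(\tau_n^\varepsilon))\le n\varepsilon$, hence ${\bar B}_0^n(t)-{\bar B}_0^n(t\wedge\tau_n^\varepsilon)\le\varepsilon$ uniformly in $n,t$; letting $n\to\infty$ and then $\varepsilon\downarrow0$ (along a sequence for which $\tau_n^\varepsilon\to\tau_{\bar{\boldsymbol{X}}}$) forces ${\bar B}_0$ to be constant on $[\tau_{\bar{\boldsymbol{X}}},T]$, completing the identification for all $k\in\mathbb{N}_0$ and $t\le T$. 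This in turn yields the absolute continuity and monotonicity assertions of part~(b).

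Part~(d): ${\bar X}_0^n={\bar Y}^n+{\bar\eta}^n$ with ${\bar\eta}^n$ non-decreasing, ${\bar\eta}^n(0)=0$, ${\bar X}_0^n\ge-1/n$, and ${\bar\eta}^n$ increasing only when ${\bar X}_0^n(s-)<0$, so $\int_{[0,T]}{\bar X}_0^n(s)\,d{\bar\eta}^n(s)\le\tfrac2n{\bar\eta}^n(T)\to0$; passing to the limit (uniform convergence of ${\bar X}_0^n$, weak convergence of $d{\bar\eta}^n$, all limits continuous) gives ${\bar X}_0={\bar Y}+{\bar\eta}\ge0$, ${\bar\eta}$ non-decreasing with ${\bar\eta}(0)=0$, and $\int_{[0,T]}{\bar X}_0\,d{\bar\eta}=0$, whence ${\bar X}_0=\Gamma({\bar Y})$ by uniqueness for the one-dimensional Skorokhod problem. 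Together with ${\bar Y}(0)=\lim{\bar X}_0^n(0)=0$, absolute continuity of ${\bar Y}$, and parts~(b)--(c), this gives $(\bar{\boldsymbol{X}},{\bar Y})\in\mathcal{C}_T$; and \eqref{eq:Xbar_k_upper}, \eqref{eq:Ybar_upper} together with part~(c) are exactly \eqref{eq:phi_k} and \eqref{eq:psi} for $(\bar{\boldsymbol{X}},{\bar Y})$ with intensity $\boldsymbol{\varphi}$, so $\boldsymbol{\varphi}\in\mathcal{S}_T(\bar{\boldsymbol{X}},{\bar Y})$. I expect the main obstacle to be the identification in part~(c): both the generalized continuous-mapping step through the discontinuous indicator $1_{[0,r_k(\bar{\boldsymbol{X}}(s)))}(y)$ --- where the absolute continuity $\bar\nu_k\ll\lambda_T$ is used precisely to render its discontinuity set negligible --- and the behaviour near the absorption time $\tau_{\bar{\boldsymbol{X}}}$, where $r(\bar{\boldsymbol{X}})$ vanishes and $r_k=k{\bar X}_k/r$ becomes indeterminate, forcing the monotonicity and transition-counting estimates above.
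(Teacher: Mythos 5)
Your proposal is correct, and the overall structure (Skorokhod representation, Doob's inequality to kill the martingale part, uniform tail control from Assumption \ref{asp:exponential-boundN}, splitting $t$ at $\tau_{\bar{\boldsymbol{X}}}$) matches the paper's. Two places differ genuinely from the paper's argument and are worth noting.

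For part (a), you derive $\bar\nu_k \ll \lambda_T$ directly from Lemma \ref{lem:property_ell}(a) via $\bar\nu_k^n(A) \le \beta\lambda_T(A) + \gamma(\beta)M_0$, Portmanteau and outer regularity. The paper instead cites Lemma A.1 of \cite{BudhirajaChenDupuis2013large}, which packages exactly this superlinearity argument (and also yields the lower-semicontinuity of cost used later). Your elementary route is perfectly valid and has the virtue of being self-contained.

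For part (c) with $k=0$ and $t>\tau_{\bar{\boldsymbol{X}}}$, the paper bounds $\sup_{\bar\tau<t\le T}|{\bar B}_0^n(t)-{\bar B}_0^n(\bar\tau)|$ by rearranging \eqref{eq:Ybar_n_upper}--\eqref{eq:Xbar_n_0_upper} and separately estimating the $X_0$-, $\eta$- and $\sum_{k\ge1}(k-2)B_k$-increments, arriving at $7\,r(\bar{\boldsymbol{X}}^n(\bar\tau)) + 2/n$. You instead use the combinatorial observation that $r(\bar{\boldsymbol{X}}^n(\cdot))$ is nonincreasing and drops by at least $1/n$ at every accepted jump (of any type $k$), so the total number of jumps after a time where $r(\bar{\boldsymbol{X}}^n)\le\varepsilon$ is at most $n\varepsilon$, giving ${\bar B}_0^n(T)-{\bar B}_0^n(\tau_n^\varepsilon)\le\varepsilon$ directly. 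This is a cleaner and arguably sharper argument (in fact you could apply it with $\tau_n^\varepsilon$ replaced simply by $\bar\tau$, since $r(\bar{\boldsymbol{X}}^n(\bar\tau))\to r(\bar{\boldsymbol{X}}(\bar\tau))=0$ by uniform convergence, avoiding the diagonalization over $\varepsilon$). It is a genuinely different idea from the paper's structural bound, though both rely on the same key fact that $r(\bar{\boldsymbol{X}}^n(\bar\tau))\to 0$.

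One thing to verify when writing this up in detail: in the claim that every accepted transition drops $r$ by at least $1/n$, you should check the boundary cases, namely a type-$0$ jump with $X_0^n(s-)=1/n$, a type-$1$ jump with $X_0^n(s-)\in\{0,1/n\}$, and any type-$k$ jump with $X_0^n(s-)=-1/n$; a quick case check shows the drop is $1/n$ or $2/n$ in each, so the claim stands. Parts (b) and (d) match the paper's arguments essentially verbatim.
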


\begin{proof}
	Assume without loss of generality that $(\bar{\boldsymbol{\nu}}^{n}, \bar{\boldsymbol{X}}^n, {\bar{Y}}^n, \bar{\boldsymbol{B}}^n, {\bar{\eta}}^n) \Rightarrow (\bar{\boldsymbol{\nu}}, \bar{\boldsymbol{X}}, {\bar{Y}}, \bar{\boldsymbol{B}}, {\bar{\eta}})$ along the whole sequence as $n \to \infty$. 
	
	(a) 
	This is an immediate consequence of the bound in \eqref{eq:cost_bd_upper} and Lemma A.1 of \cite{BudhirajaChenDupuis2013large}.

	(b) 
	The first statement is an immediate consequence of the $\mathcal{C}$-tightness argued in the proof of Lemma \ref{lem:tightness}. 
	Then using \eqref{eq:Xbar_n_k_upper}, Assumption \ref{asp:convgN} and the fact that ${\bar{X}}^n_k(t) \ge 0$ a.s., we have \eqref{eq:Xbar_k_upper}.
	Next, note that 
	 by Assumption \ref{asp:exponential-boundN}, as $K \to \infty$
	\begin{equation}
		\label{eq:UI_Xbar_n}
		\sup_{n \in \mathbb{N}} \sup_{0 \le t \le T} \left| \sum_{k=K}^\infty (k-2){\bar{B}}^n_k(t) \right| \le \sup_{n \in \mathbb{N}} \sum_{k=K}^\infty \frac{kn_k}{n} \le K^{-\varepsilon_{\boldsymbol{p}}} \sup_{n \in \mathbb{N}} \sum_{k=K}^\infty \frac{n_k}{n} k^{1+\varepsilon_{\boldsymbol{p}}} \to 0.
	\end{equation}
	Hence $\sum_{k=0}^\infty (k-2) {\bar{B}}^n_k \Rightarrow \sum_{k=0}^\infty (k-2) {\bar{B}}_k \in \mathcal{C}$.
	From this and \eqref{eq:Ybar_n_upper} we see that \eqref{eq:Ybar_upper} holds.
	Next, since $({\bar{Y}}^n,{\bar{\eta}}^n) \Rightarrow ({\bar{Y}},{\bar{\eta}}) \in \mathcal{C}^2$ and ${\bar{X}}^n_0(t) \ge -\frac{1}{n}$ a.s., we have from \eqref{eq:Xbar_n_0_upper} that \eqref{eq:Xbar_0_upper} holds.
	This completes the proof of part (b). 
	
	(c)
	By Doob's inequality, as $n \to \infty$
	\begin{align*}
		{{E}} \sum_{k=0}^\infty \sup_{0 \le t \le T} |{\tilde{B}}^n_k(t)|^2 
		& \le \frac{4}{n} {{E}} \sum_{k=0}^\infty \int_{[0,T] \times [0,1]} \varphi^n_k(s,y)  {{1}}_{[0,r_k(\bar{\boldsymbol{X}}^n(s)))}(y) \,ds\,dy \\
		& \le \frac{4}{n} {{E}} \sum_{k=0}^\infty \int_{[0,T] \times [0,1]} \left[ \ell(\varphi^n_k(s,y)) + 2 \right] {{1}}_{[0,r_k(\bar{\boldsymbol{X}}^n(s)))}(y) ds\,dy\\
		& \le \frac{4}{n} \left( M_0 + 2 T \right) \\
		& \to 0,
	\end{align*}
	where the second inequality follows from Lemma \ref{lem:property_ell}(b) and the third inequality follows from \eqref{eq:cost_bd_upper}.
	Therefore as $n \to \infty$
	\begin{equation}
		\label{eq:cvg_Atil_Btil_upper}
		{\tilde{\boldsymbol{B}}}^n \Rightarrow \boldsymbol{0}.
	\end{equation}	
	By appealing to the Skorokhod representation theorem, we can assume without loss of generality that $(\bar{\boldsymbol{\nu}}^{n}, \bar{\boldsymbol{X}}^n, {\bar{Y}}^n, \bar{\boldsymbol{B}}^n, {\bar{\eta}}^n, {\tilde{\boldsymbol{B}}}^n) \to (\bar{\boldsymbol{\nu}}, \bar{\boldsymbol{X}}, {\bar{Y}}, \bar{\boldsymbol{B}}, {\bar{\eta}}, \boldsymbol{0})$ a.s.\ on $(\Omega^*,\mathcal{F}^*,{P}^*)$, namely there exists some event $F \in \mathcal{F}^*$ such that ${P}^*(F^c) = 0$ and 
	\begin{equation*}
		(\bar{\boldsymbol{\nu}}^{n}, \bar{\boldsymbol{X}}^n, {\bar{Y}}^n, \bar{\boldsymbol{B}}^n, {\bar{\eta}}^n, {\tilde{\boldsymbol{B}}}^n) \to (\bar{\boldsymbol{\nu}}, \bar{\boldsymbol{X}}, {\bar{Y}}, \bar{\boldsymbol{B}}, {\bar{\eta}}, \boldsymbol{0}) \mbox{ on } F.
	\end{equation*}
	Fix ${\bar{\omega}} \in F$. The rest of the argument will be made for such an ${\bar{\omega}}$ which will be suppressed from the notation.
	From \eqref{eq:UI_Xbar_n} we have that as $n \to \infty$ $$r(\bar{\boldsymbol{X}}^n(t)) = ({\bar{X}}^n_0(t))^+ + \sum_{k=1}^\infty k {\bar{X}}^n_k(t) \to ({\bar{X}}_0(t))^+ + \sum_{k=1}^\infty k {\bar{X}}_k(t) = r(\bar{\boldsymbol{X}}(t))$$
	uniformly in $t \in [0,T]$, and $r(\bar{\boldsymbol{X}}(\cdot))$ is continuous.
	Let ${\bar{\tau}} \doteq \tau_{\bar{\boldsymbol{X}}}$, where $\tau_{\bar{\boldsymbol{X}}}$ is defined through \eqref{eq:defntauphi}, namely
	${\bar{\tau}} = \inf \{ t \in [0,T] : r(\bar{\boldsymbol{X}}(t)) = 0\} \wedge T$.
	We will argue that \eqref{eq:Bbar_k_upper} holds for all $t < {\bar{\tau}}$, $t = {\bar{\tau}}$ and $t >{\bar{\tau}}$.

	For $t < {\bar{\tau}}$, we have $r(\bar{\boldsymbol{X}}(t)) > 0$.
	Hence for each $k \in \mathbb{N}_0$,
	\begin{equation}\label{eq:cgceofindic}
		 {{1}}_{[0,r_k(\bar{\boldsymbol{X}}^n(s)))}(y) \to  {{1}}_{[0,r_k(\bar{\boldsymbol{X}}(s)))}(y)
	\end{equation}
	as $n \to \infty$ for $\lambda_t$-a.e.\ $(s,y) \in [0,t] \times [0,1]$
	since $\lambda_t\{(y,s): y = r_k(\bar{\boldsymbol{X}}(s))\}=0$.
	From \eqref{eq:cgceofindic} and the uniform integrability of $(s,y) \mapsto ({{1}}_{[0,r_k(\bar{\boldsymbol{X}}^n(s)))}(y) -{{1}}_{[0,r_k(\bar{\boldsymbol{X}}(s)))}(y))  \varphi^n_k(s,y)$ (with respect to the normalized Lebesgue measure on
	$[0,T]\times [0,1]$) which follows from \eqref{eq:cost_bd_upper} and the superlinearity of $\ell$, we have that
	$${\hat{B}}^n_k(t) - \int_{[0,t] \times [0,1]}  {{1}}_{[0,r_k(\bar{\boldsymbol{X}}(s)))}(y) \, \varphi_k^n(s,y) ds\,dy \to 0.$$
	Also, from the bound in \eqref{eq:cost_bd_upper} it follows that
	$$\int_{[0,t] \times [0,1]}  {{1}}_{[0,r_k(\bar{\boldsymbol{X}}(s)))}(y) \, \varphi_k^n(s,y) ds\,dy \to \int_{[0,t] \times [0,1]}  {{1}}_{[0,r_k(\bar{\boldsymbol{X}}(s)))}(y) \, \varphi_k(s,y) ds\,dy.$$
	Combining the two convergence statements we have
	\begin{equation}
		{\hat{B}}^n_k(t) \to \int_{[0,t] \times [0,1]}  {{1}}_{[0,r_k(\bar{\boldsymbol{X}}(s)))}(y) \, \varphi_k(s,y) ds\,dy.\label{eq:cgcebhatkt}
	\end{equation}
	The above convergence along with \eqref{eq:Bbar_n_k_upper_decomp} and \eqref{eq:cvg_Atil_Btil_upper} gives \eqref{eq:Bbar_k_upper} for $t < {\bar{\tau}}$.

	Since \eqref{eq:Bbar_k_upper} holds for $t < {\bar{\tau}}$, it also holds for $t = {\bar{\tau}}$ by continuity of $\bar{\boldsymbol{B}}$ and of the right side in \eqref{eq:Bbar_k_upper}.	
	
	Now suppose $T\ge t > {\bar{\tau}}$.
	Since $r(\bar{\boldsymbol{X}}(\cdot))$ is continuous, we see from the definition of ${\bar{\tau}}$ that $r(\bar{\boldsymbol{X}}({\bar{\tau}})) = 0$.
	Noting that $r(\bar{\boldsymbol{X}}^n(\cdot))$ is non-negative and non-increasing, so is $r(\bar{\boldsymbol{X}}(\cdot))$.
	Therefore $r(\bar{\boldsymbol{X}}(t)) = 0$ and $\bar{\boldsymbol{X}}(t) = \boldsymbol{0}$ for ${\bar{\tau}} \le t \le T$.
	From this we see that the right hand side of \eqref{eq:Bbar_k_upper} remains constant for ${\bar{\tau}} \le t \le T$ and it suffices to show that $\bar{\boldsymbol{B}}(t) = \bar{\boldsymbol{B}}({\bar{\tau}})$ for ${\bar{\tau}} < t \le T$.
	 From \eqref{eq:Bbar_n_k_upper} it follows that, for $k \in \mathbb{N}$,
	\begin{equation}
		\label{eq:cvg_upper_Bbar_k}
		\sup_{{\bar{\tau}} < t \le T} |{\bar{B}}^n_k(t) - {\bar{B}}^n_k({\bar{\tau}})| = {\bar{B}}^n_k(T) - {\bar{B}}^n_k({\bar{\tau}}) = {\bar{X}}^n_k({\bar{\tau}}) - {\bar{X}}^n_k(T) \le {\bar{X}}^n_k({\bar{\tau}}),
	\end{equation}
	which converges to ${\bar{X}}_k({\bar{\tau}}) = 0$ as $n \to \infty$.
	Hence ${\bar{B}}_k(t) = {\bar{B}}_k({\bar{\tau}})$ for ${\bar{\tau}} < t \le T$ and this gives \eqref{eq:Bbar_k_upper} for each $k \in \mathbb{N}$.
	Next we show ${\bar{B}}_0(t) = {\bar{B}}_0({\bar{\tau}})$ for ${\bar{\tau}} < t \le T$.
	From \eqref{eq:Ybar_n_upper} and \eqref{eq:Xbar_n_0_upper} we have
	\begin{align}
		& \sup_{{\bar{\tau}} < t \le T} |{\bar{B}}^n_0(t) - {\bar{B}}^n_0({\bar{\tau}})| \notag \\
		& \le \sup_{{\bar{\tau}} < t \le T} |{\bar{X}}^n_0(t) - {\bar{X}}^n_0({\bar{\tau}})| + \sup_{{\bar{\tau}} < t \le T} |{\bar{\eta}}^n(t) -{\bar{\eta}}^n({\bar{\tau}})| + \sum_{k=1}^\infty |k-2| \sup_{{\bar{\tau}} < t \le T} |{\bar{B}}^n_k(t) - {\bar{B}}^n_k({\bar{\tau}})|. \label{eq:cvg_upper_Bbar_0}
	\end{align}
	Since ${\bar{X}}^n_0(t) \ge -\frac{1}{n}$, we have
	\begin{align*}
		\sup_{{\bar{\tau}} < t \le T} |{\bar{X}}^n_0(t) - {\bar{X}}^n_0({\bar{\tau}})| & \le \sup_{{\bar{\tau}} < t \le T} |{\bar{X}}^n_0(t)| + |{\bar{X}}^n_0({\bar{\tau}})| \\
		& \le \sup_{{\bar{\tau}} < t \le T} ({\bar{X}}^n_0(t))^+ + \frac{1}{n} + ({\bar{X}}^n_0({\bar{\tau}}))^+ + \frac{1}{n} \\
		& \le \sup_{{\bar{\tau}} < t \le T} r(\bar{\boldsymbol{X}}^n(t)) + r(\bar{\boldsymbol{X}}^n({\bar{\tau}})) + \frac{2}{n} \\
		& \le 2r(\bar{\boldsymbol{X}}^n({\bar{\tau}})) + \frac{2}{n},
	\end{align*}
	where the last line follows from the fact that $r(\bar{\boldsymbol{X}}^n(t))$ is non-increasing for $t \in [0,T]$.
	From \eqref{eq:etabar_n_upper} and \eqref{eq:Bbar_n_k_upper} it follows that
	\begin{align*}
		& \sup_{{\bar{\tau}} < t \le T} |{\bar{\eta}}^n(t) -{\bar{\eta}}^n({\bar{\tau}})| \\
		& = \sup_{{\bar{\tau}} < t \le T} 2 \sum_{k=1}^\infty \frac{1}{n} \int_{({\bar{\tau}},t] \times [0,1]}  {{1}}_{\{{\bar{X}}^n_0(u-) < 0\}} {{1}}_{[0,r_k(\bar{\boldsymbol{X}}^n(u-)))}(y) \, N^{n\varphi^n_k}_k(du\,dy) \\
		& \le \sup_{{\bar{\tau}} < t \le T} 2 \sum_{k=1}^\infty \frac{1}{n} \int_{({\bar{\tau}},t] \times [0,1]}  {{1}}_{[0,r_k(\bar{\boldsymbol{X}}^n(u-)))}(y) \, N^{n\varphi^n_k}_k(du\,dy) \\
		& = \sup_{{\bar{\tau}} < t \le T} 2 \sum_{k=1}^\infty |{\bar{B}}^n_k(t) - {\bar{B}}^n_k({\bar{\tau}})|. 
	\end{align*}
	Combining above two estimates with \eqref{eq:cvg_upper_Bbar_0}, we see that as $n \to \infty$,
	\begin{align}
		\sup_{{\bar{\tau}} < t \le T} |{\bar{B}}^n_0(t) - {\bar{B}}^n_0({\bar{\tau}})| & \le 2r(\bar{\boldsymbol{X}}^n({\bar{\tau}})) + \frac{2}{n} + \sup_{{\bar{\tau}} < t \le T} \sum_{k=1}^\infty (k+4) |{\bar{B}}^n_k(t) - {\bar{B}}^n_k({\bar{\tau}})| \nonumber\\
		& \le 2r(\bar{\boldsymbol{X}}^n({\bar{\tau}})) + \frac{2}{n} + \sum_{k=1}^\infty (k+4) {\bar{X}}^n_k({\bar{\tau}}) \le 7 r(\bar{\boldsymbol{X}}^n({\bar{\tau}})) + \frac{2}{n} \label{eq:middl}\\
		& \to 7 r(\bar{\boldsymbol{X}}({\bar{\tau}})) = 0,\nonumber
	\end{align}
	where the second inequality follows from \eqref{eq:cvg_upper_Bbar_k}.
	Therefore ${\bar{B}}_0(t) = {\bar{B}}_0({\bar{\tau}})$ for ${\bar{\tau}} < t \le T$ and this gives \eqref{eq:Bbar_k_upper} for $k=0$.

	Since we have proved \eqref{eq:Bbar_k_upper} for all $t < {\bar{\tau}}$, $t = {\bar{\tau}}$ and $t >{\bar{\tau}}$, part (c) follows.

	(d)
	From \eqref{eq:Xbar_0_upper} and a well known characterization of the solution of the Skorohod problem (see, e.g., \cite[Section 3.6.C]{KaratzasShreve1991brownian}), it suffices to show that ${\bar{\eta}}(0) = 0$, ${\bar{\eta}}(t) \ge 0$, ${\bar{\eta}}(t)$ is non-decreasing for $t \in [0,T]$ and $\int_0^T {\bar{X}}_0(t) \, {\bar{\eta}}(dt) = 0$.	
	Since ${\bar{\eta}}^n(0) = 0$, ${\bar{\eta}}^n(t) \ge 0$ and ${\bar{\eta}}^n(t)$ is non-decreasing for $t \in [0,T]$, so is ${\bar{\eta}}$.
	It remains to show $\int_0^T {\bar{X}}_0(t) \, {\bar{\eta}}(dt) = 0$.
	Note that ${\bar{\eta}}^n(t)$ increases only when ${\bar{X}}^n_0(t-)<0$, namely ${\bar{X}}^n_0(t-) = - \frac{1}{n}$.
	Therefore
	\begin{equation*}
		\int_0^T \left( {\bar{X}}^n_0(t-)+\frac{1}{n} \right) {\bar{\eta}}^n(dt) = 0.
	\end{equation*}
	From this we have
	\begin{align}
		& \left| \int_0^T {\bar{X}}_0(t) \, {\bar{\eta}}(dt) \right| \notag \\
		& = \left| \int_0^T {\bar{X}}_0(t) \, {\bar{\eta}}(dt) - \int_0^T \left( {\bar{X}}^n_0(t-)+\frac{1}{n} \right) {\bar{\eta}}^n(dt) \right| \notag \\
		& \le \left| \int_0^T {\bar{X}}_0(t) \, {\bar{\eta}}(dt) - \int_0^T {\bar{X}}_0(t) \, {\bar{\eta}}^n(dt) \right| + \int_0^T |{\bar{X}}_0(t) - {\bar{X}}^n_0(t-)| \, {\bar{\eta}}^n(dt) + \frac{{\bar{\eta}}^n(T)}{n}. \label{eq:cvg_etabar_upper}
	\end{align}
	Since both ${\bar{\eta}}^n$ and ${\bar{\eta}}$ are non-decreasing, we see that ${\bar{\eta}}^n \to {\bar{\eta}}$ as finite measures on $[0,T]$.
	Combining this with the fact that ${\bar{X}}_0 \in \mathbb{C}_b([0,T]:\mathbb{R})$, we get $$\left| \int_0^T {\bar{X}}_0(t) \, {\bar{\eta}}(dt) - \int_0^T {\bar{X}}_0(t) \, {\bar{\eta}}^n(dt) \right| \to 0$$ as $n \to \infty$.
	Also from continuity of ${\bar{X}}_0$, we have uniform convergence of ${\bar{X}}^n_0$ to ${\bar{X}}_0$ and hence
	\begin{equation*}
		\int_0^T |{\bar{X}}_0(t) - {\bar{X}}^n_0(t-)| \, {\bar{\eta}}^n(dt) + \frac{{\bar{\eta}}^n(T)}{n} \le \left( \sup_{0 \le t \le T} |{\bar{X}}^n_0(t-)-{\bar{X}}_0(t)| + \frac{1}{n} \right) {\bar{\eta}}^n(T) \to 0
	\end{equation*}
	as $n \to \infty$.	
	Combining these two convergence results with \eqref{eq:cvg_etabar_upper},
	we see that
	\begin{equation*}
		\int_0^T {\bar{X}}_0(t) \, {\bar{\eta}}(dt) = 0.
	\end{equation*}
	This proves part (d)  and  completes the proof.
\end{proof}

\section{Laplace upper bound}

\label{sec:upper}

In this section we prove the Laplace upper bound \eqref{eq:lappriupp}.

From \eqref{eq:mainrepn17}, for every $n \in \mathbb{N}$, we can choose ${\tilde{\boldsymbol{\varphi}}}^n \doteq ({\tilde{\varphi}}^n_k)_{k \in \mathbb{N}_0} \in 
\bar{\mathcal{A}}_b$ such that 
\begin{equation*}
-\frac{1}{n} \log {{E}} e^{-nh(\boldsymbol{X}^{n},Y^{n})} \ge {{E}} \left\{
\sum_{k=0}^\infty \int_{[0,T] \times [0,1]} \ell({\tilde{\varphi}}_k^n(s,y))
\, ds\,dy + h({\bar{\boldsymbol{X}}}^{n,{\tilde{\boldsymbol{\varphi}}}^n}, {\bar{Y}}^{n,{\tilde{\boldsymbol{\varphi}}}^n}) \right\} - \frac{1}{n},
\end{equation*}
where $(\bar{\boldsymbol{X}}^{n,{\tilde{\boldsymbol{\varphi}}}^n}, {\bar{Y}}^{n,{\tilde{\boldsymbol{\varphi}}}%
^n}) $ are defined through \eqref{eq:Ybar_n_upper_temp}--%
\eqref{eq:Xbar_n_k_upper_temp} by replacing $\boldsymbol{\varphi}^n$ with ${\tilde{\boldsymbol{\varphi}}}%
^n$. Since $\|h\|_\infty < \infty$, we have 
\begin{equation*}
\sup_{n \in \mathbb{N}} {{E}} \sum_{k=0}^\infty \int_{[0,T] \times
[0,1]} \ell({\tilde{\varphi}}_k^n(s,y)) \, ds\,dy \le 2\|h\|_\infty + 1
\doteq M_h.
\end{equation*}
Now we modify ${\tilde{\boldsymbol{\varphi}}}^n$ so that the last inequality holds not in
the sense of expectation, but rather almost surely, for a possibly larger
constant. Fix $\sigma \in (0,1)$ and define 
\begin{equation*}
{\tilde{\tau}}^n \doteq \inf \left\{t \in [0,T] : \sum_{k=0}^\infty
\int_{[0,t] \times [0,1]} \ell({\tilde{\varphi}}_k^n(s,y)) \, ds\,dy > 2 M_h
\|h\|_\infty / \sigma \right\} \wedge T.
\end{equation*}
For $k \in \mathbb{N}_0$, letting 
\begin{equation*}
\varphi^n_k(s,y) \doteq {\tilde{\varphi}}^n_k(s,y) {{1}}_{\{s \le 
{\tilde{\tau}}^n\}} + {{1}}_{\{s > {\tilde{\tau}}^n\}}, (s,y) \in
[0,T]\times[0,1],
\end{equation*}
we have ${\boldsymbol{\varphi}}^n \doteq ({\varphi}^n_k)_{k \in \mathbb{N}_0} \in \bar{\mathcal{A}}_b$ since ${\tilde{\tau}}^n$
is an $\{\mathcal{F}_t\}$-stopping time. Also 
\begin{equation*}
{{E}} \sum_{k=0}^\infty \int_{[0,T] \times [0,1]}
\ell(\varphi_k^n(s,y)) \, ds\,dy \le {{E}} \sum_{k=0}^\infty
\int_{[0,T] \times [0,1]} \ell({\tilde{\varphi}}_k^n(s,y)) \, ds\,dy
\end{equation*}
and 
\begin{align*}
{P}(\boldsymbol{\varphi}^n \ne{\tilde{\boldsymbol{\varphi}}}^n) & \le {P}
\left( \sum_{k=0}^\infty \int_{[0,T] \times [0,1]} \ell({\tilde{\varphi}}%
_k^n(s,y)) \, ds\,dy > 2 M_h \|h\|_\infty / \sigma \right) \\
& \le \frac{\sigma}{2 M_h \|h\|_\infty} {{E}} \sum_{k=0}^\infty
\int_{[0,T] \times [0,1]} \ell({\tilde{\varphi}}_k^n(s,y)) \, ds\,dy \\
& \le \frac{\sigma}{2 \|h\|_\infty}.
\end{align*}
Letting $(\bar{\boldsymbol{X}}^{n,{\boldsymbol{\varphi}}^n}, {\bar{Y}}^{n,{\boldsymbol{\varphi}}%
^n}) $ be defined through \eqref{eq:Ybar_n_upper_temp}--%
\eqref{eq:Xbar_n_k_upper_temp} using $\boldsymbol{\varphi}^n$, we have
\begin{equation*}
\left| {{E}} h(\bar{\boldsymbol{X}}^{n,\boldsymbol{\varphi}^n},{\bar{Y}}^{n,\boldsymbol{\varphi}^n}) - {%
{E}} h(\bar{\boldsymbol{X}}^{n,{\tilde{\boldsymbol{\varphi}}}^n},{\bar{Y}}^{n,{\tilde{\boldsymbol{\varphi}}%
}^n}) \right| \le 2 \|h\|_\infty {P}(\boldsymbol{\varphi}^n \ne{\tilde{%
\boldsymbol{\varphi}}}^n) \le \sigma.
\end{equation*}
Hence we have 
\begin{equation*}
-\frac{1}{n} \log {{E}} e^{-nh(\boldsymbol{X}^{n},Y^{n})} \ge {{E}} \left\{
\sum_{k=0}^\infty \int_{[0,T] \times [0,1]} \ell(\varphi_k^n(s,y)) \, ds\,dy
+ h(\bar{\boldsymbol{X}}^{n,\boldsymbol{\varphi}^n},{\bar{Y}}^{n,\boldsymbol{\varphi}^n}) \right\} - \frac{1}{n}
- \sigma
\end{equation*}
and 
\begin{equation}  \label{eq:cost_bd_upper_B}
\sup_{n \in \mathbb{N}} \sum_{k=0}^\infty \int_{[0,T] \times [0,1]}
\ell(\varphi_k^n(s,y)) \, ds\,dy \le 2M_h \|h\|_{\infty}/\sigma \doteq K_0, %
\mbox{ a.s. } {P}.
\end{equation}

Now we can complete the proof of the Laplace upper bound. Recall that 
$h \in 
\mathbb{C}_b(\mathcal{D}_\infty \times \mathcal{D})$.
Write $(\bar{\boldsymbol{\nu}}^{n},\bar{\boldsymbol{X}}^{n},{\bar{Y}}^{n}) \doteq (\bar{\boldsymbol{\nu}}^{n,\boldsymbol{\varphi}^n}, \bar{\boldsymbol{X}}^{n,{\boldsymbol{\varphi}}^n}, {\bar{Y}}^{n,{\boldsymbol{\varphi}}%
^n}) $, where $\bar{\boldsymbol{\nu}}^{n,\boldsymbol{\varphi}^n}$ is as defined in \eqref{eq:nu_n_upper} using $\boldsymbol{\varphi}^n$.
Noting from %
\eqref{eq:cost_bd_upper_B} that \eqref{eq:cost_bd_upper} is satisfied with $%
M_0= K_0$, we have from Lemma \ref{lem:tightness} that $\{(\bar{\boldsymbol{\nu}}^{n},%
\bar{\boldsymbol{X}}^{n},{\bar{Y}}^{n})\}$ is tight. Assume without
loss of generality that $(\bar{\boldsymbol{\nu}}^{n},\bar{\boldsymbol{X}}^{n},{\bar{Y}}%
^{n})$ converges along the whole sequence weakly to $(\bar{\boldsymbol{\nu}},\bar{\boldsymbol{X}}%
,{\bar{Y}})$, given on some probability space $(\Omega^*,\mathcal{F}^*,{%
P}^*)$. By Lemma \ref{lem:char_limit} we have $(\bar{\boldsymbol{X}},{\bar{%
Y}}) \in \mathcal{C}_T$ and $\bar{\boldsymbol{\nu}} = \bar{\boldsymbol{\nu}}^{\boldsymbol{\varphi}}$ for some $\boldsymbol{\varphi} \in 
\mathcal{S}_T(\bar{\boldsymbol{X}},{\bar{Y}})$ a.s.\ ${P}^*$, where $\bar{\boldsymbol{\nu}}^{\boldsymbol{\varphi}}$ is as defined in \eqref{eq:nu_n_upper} using $\boldsymbol{\varphi}$. Using
Fatou's lemma and the definition of $I_T$ in \eqref{eq:rate_function}
\begin{align*}
\liminf_{n \to \infty} -\frac{1}{n} \log {{E}} e^{-nh(\boldsymbol{X}^{n},Y^{n})} & \ge
\liminf_{n \to \infty} {{E}} \left\{ \sum_{k=0}^\infty \int_{[0,T]
\times [0,1]} \ell(\varphi_k^n(s,y)) \, ds\,dy + h(\bar{\boldsymbol{X}}^n,{\bar{Y}}^n)
- \frac{1}{n} - \sigma \right\} \\
& \ge {{E}}^* \left\{ \sum_{k=0}^\infty \int_{[0,T] \times [0,1]}
\ell(\varphi_k(s,y)) \, ds\,dy + h(\bar{\boldsymbol{X}},{\bar{Y}}) \right\} - \sigma \\
& \ge \inf_{(\boldsymbol{\zeta} ,\psi ) \in \mathcal{D}_\infty \times \mathcal{D}}
\{I_T(\boldsymbol{\zeta} ,\psi ) + h(\boldsymbol{\zeta} ,\psi )\} - \sigma.
\end{align*}
where the second inequality is a consequence of Lemma A.1 in \cite%
{BudhirajaChenDupuis2013large}. Since $\sigma \in (0,1)$ is arbitrary, this
completes the proof of the Laplace upper bound.

\section{Laplace lower bound}

\label{sec:lower}

In this section we prove the Laplace lower bound \eqref{eq:lapprilow}.

The following lemma, which shows unique solvability of the ODE \eqref{eq:psi}
and \eqref{eq:phi_k} for controls $\boldsymbol{\varphi}$ in a suitable class, is key in
the proof.

\begin{Lemma}
\label{lem:uniqueness} Fix $\sigma \in (0,1)$. Given $(\boldsymbol{\zeta} ,\psi ) \in 
\mathcal{C}_T$ with $I_T(\boldsymbol{\zeta} ,\psi ) < \infty$, there exists $\boldsymbol{\varphi}^* \in 
\mathcal{S}_T(\boldsymbol{\zeta} ,\psi )$ such that

\begin{enumerate}[\upshape(a)]

\item $\sum_{k=0}^\infty \int_{[0,T] \times [0,1]} \ell(\varphi_k^*(s,y)) \,
ds\,dy \le I_T(\boldsymbol{\zeta} ,\psi ) + \sigma$.

\item If $(\tilde{\boldsymbol{\zeta}},\tilde\psi)$ is another pair in $\mathcal{C}_T$ such
that $\boldsymbol{\varphi}^* \in \mathcal{S}_T(\tilde{\boldsymbol{\zeta}},\tilde\psi)$, then $%
(\tilde{\boldsymbol{\zeta}},\tilde\psi) = (\boldsymbol{\zeta} ,\psi )$.
\end{enumerate}

\end{Lemma}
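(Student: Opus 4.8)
The plan has three parts: (i) exhibit the cost-minimising control in $\mathcal S_T(\boldsymbol{\zeta},\psi)$ and record the integrability it carries; (ii) perturb it, staying in $\mathcal S_T(\boldsymbol{\zeta},\psi)$, so that the defining equations \eqref{eq:psi}--\eqref{eq:phi_k} (which are instances of \eqref{eq:contdetpoi}) become affine in $y$ near the trajectory, at an additional cost at most $\sigma$; and (iii) prove uniqueness for the perturbed equations by a Gronwall estimate, after a time change removing the singular factor $1/r(\cdot)$. For (i): given any $\boldsymbol{\varphi}\in\mathcal S_T(\boldsymbol{\zeta},\psi)$, differentiating \eqref{eq:phi_k} shows $a_k(s):=\int_0^{r_k(\boldsymbol{\zeta}(s))}\varphi_k(s,y)\,dy=-\zeta_k'(s)$ for $k\ge1$, and then \eqref{eq:psi} forces $a_0(s)=\tfrac12\big(\sum_{k\ge1}(k-2)a_k(s)-\psi'(s)\big)\ge0$; thus the ``realised rates'' $a_k$ depend on $(\boldsymbol{\zeta},\psi)$ alone. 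By convexity of $\ell$ and Jensen's inequality, the least-cost element of $\mathcal S_T(\boldsymbol{\zeta},\psi)$ is $\varphi^{c}_k(s,y)=c_k(s)\mathbf 1_{[0,r_k(\boldsymbol{\zeta}(s)))}(y)+\mathbf 1_{[r_k(\boldsymbol{\zeta}(s)),1]}(y)$ with $c_k(s):=a_k(s)/r_k(\boldsymbol{\zeta}(s))$ (read as $0$, with $\varphi^c_k\equiv1$, when $r_k(\boldsymbol{\zeta}(s))=0$), and its cost is exactly $\sum_k\int_0^T r_k(\boldsymbol{\zeta}(s))\ell(c_k(s))\,ds=I_T(\boldsymbol{\zeta},\psi)<\infty$. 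I will use the consequent uniform integrability of $\{s\mapsto a_k(s)=r_k(\boldsymbol{\zeta}(s))c_k(s)\}$ (via Lemma \ref{lem:property_ell}(a)), the identity $k c_k(s)/r(\boldsymbol{\zeta}(s))=-(\log\zeta_k)'(s)$, and the cancellation $\sum_j j\zeta_j(t)\,a_j(s)/\zeta_j(s)\le-\tfrac{d}{ds}\!\sum_j j\zeta_j(s)$, valid for $s\le t$ since each $\zeta_j$ is non-increasing.

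Part (ii). The only obstruction to uniqueness here is that $\varphi^c_k$ jumps in $y$ at $y=r_k(\boldsymbol{\zeta}(s))$, so that $\int_0^{r_k(\tilde{\boldsymbol{\zeta}}(s))}\varphi^c_k(s,y)\,dy$ is not proportional to $r_k(\tilde{\boldsymbol{\zeta}}(s))$ for a competitor $\tilde{\boldsymbol{\zeta}}$. I remove it by extending the flat part: set $\varphi^*_k(s,y)=c_k(s)$ for $y<(1+\varepsilon)r_k(\boldsymbol{\zeta}(s))\wedge1$ and $\varphi^*_k(s,y)=1$ beyond, with $\varepsilon:=\sigma/(1+I_T(\boldsymbol{\zeta},\psi))$. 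Since $\int_0^{r_k(\boldsymbol{\zeta}(s))}\varphi^*_k(s,y)\,dy=a_k(s)$ still holds, $\boldsymbol{\varphi}^*\in\mathcal S_T(\boldsymbol{\zeta},\psi)$, and the extra cost is $\sum_k\int_0^T\big((1+\varepsilon)r_k(\boldsymbol{\zeta}(s))\wedge1-r_k(\boldsymbol{\zeta}(s))\big)\ell(c_k(s))\,ds\le\varepsilon\,I_T(\boldsymbol{\zeta},\psi)<\sigma$, which is (a). (When $I_T(\boldsymbol{\zeta},\psi)=0$ one has $\boldsymbol{\varphi}^*\equiv1$ and uniqueness is Proposition \ref{prop:uniqueness_LLN}; the set on which $r_k(\boldsymbol{\zeta}(s))$ is too close to $1$ to allow the extension is treated separately and is harmless, since there $c_k(s)\le 2a_k(s)$ is integrable in $s$.) The upshot is that wherever $r_k(\tilde{\boldsymbol{\zeta}}(s))$ lies in the extended flat region, a competitor satisfies $\tilde\zeta_k'(s)=-c_k(s)\,k\tilde\zeta_k(s)/r(\tilde{\boldsymbol{\zeta}}(s))$, of exactly the same form as the equation for $\zeta_k$.

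Part (iii). Let $(\tilde{\boldsymbol{\zeta}},\tilde\psi)\in\mathcal C_T$ with $\boldsymbol{\varphi}^*\in\mathcal S_T(\tilde{\boldsymbol{\zeta}},\tilde\psi)$. Both $\zeta_k,\tilde\zeta_k$ are non-increasing, equal to $p_k$ at time $0$, and agree in $r_k$ there, so by a continuation argument it suffices to show $(\tilde{\boldsymbol{\zeta}},\tilde\psi)=(\boldsymbol{\zeta},\psi)$ on a subinterval $[0,s^*+\delta']$ on which $r_k(\tilde{\boldsymbol{\zeta}}(\cdot))$ stays in the extended flat region for all $k$ and $r(\tilde{\boldsymbol{\zeta}}(\cdot))$ stays bounded below (and then to cross the times where some $\zeta_k$ or $r(\boldsymbol{\zeta})$ vanishes: by Remark \ref{rmk:property_ODE}(b),(c) all coordinates of $\boldsymbol{\zeta}$ and of $\tilde{\boldsymbol{\zeta}}$ are identically $0$ past $\tau_{\boldsymbol{\zeta}}$, and the monotonicity bound $\sup_{t\ge\tau}|\tilde\zeta_k(t)-\tilde\zeta_k(\tau)|\le\tilde\zeta_k(\tau)$ from the proof of Lemma \ref{lem:char_limit}(c) handles the remaining coordinates). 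On such an interval the logarithmic differences $L_k:=\log\tilde\zeta_k-\log\zeta_k$ obey $L_k'(s)=-(\log\zeta_k)'(s)\,\Delta(s)/r(\tilde{\boldsymbol{\zeta}}(s))$ with $\Delta:=r(\tilde{\boldsymbol{\zeta}})-r(\boldsymbol{\zeta})=\big(\Gamma(\tilde\psi)-\Gamma(\psi)\big)+\sum_j j\zeta_j\big(e^{L_j}-1\big)$; inserting $|L_j(t)|\le\int_0^t(-(\log\zeta_j)'(s))|\Delta(s)|/r(\tilde{\boldsymbol{\zeta}}(s))\,ds$, the cancellation from Part (i), the Lipschitz property of $\Gamma$ applied to $\tilde\zeta_0=\Gamma(\tilde\psi)$, and $\sum_k kp_k<\infty$ into a Gronwall estimate for $\sup_{u\le t}|\tilde\psi(u)-\psi(u)|+\sup_{u\le t}|\Delta(u)|$ forces $\tilde{\boldsymbol{\zeta}}=\boldsymbol{\zeta}$, hence $\tilde\psi=\psi$, on the interval; the integrability the Gronwall integrator needs, namely $\int\big(-\tfrac{d}{ds}\!\sum_j j\zeta_j(s)\big)/r(\tilde{\boldsymbol{\zeta}}(s))\,ds<\infty$, holds there because $r(\tilde{\boldsymbol{\zeta}})$ is bounded below and $\sum_j j\zeta_j$ has bounded variation. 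The time change $g$ determined by $g'(s)=r(\tilde{\boldsymbol{\zeta}}(g(s)))\mathbf 1_{\{g(s)<\tau_{\tilde{\boldsymbol{\zeta}}}\}}+\mathbf 1_{\{g(s)\ge\tau_{\tilde{\boldsymbol{\zeta}}}\}}$, exactly as in Proposition \ref{prop:uniqueness_LLN}, turns $\tilde\zeta_k'=-c_k k\tilde\zeta_k/r(\tilde{\boldsymbol{\zeta}})$ into the linear equation $\hat\zeta_k'=-k\,c_k(g(\cdot))\,\hat\zeta_k$ and is the convenient device for packaging this estimate near $\tau_{\boldsymbol{\zeta}}$.

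The main difficulty is Part (iii): the coefficients $c_k(s)$ are only integrable, not bounded (because $I_T<\infty$ controls merely $\sum_k\int r_k\ell(c_k)$, with the small weight $r_k$), and they enter divided by $r(\boldsymbol{\zeta})$ and $r(\tilde{\boldsymbol{\zeta}})$, which degenerate near $\tau_{\boldsymbol{\zeta}}$ — this is precisely the ``singular dynamics'' flagged in the introduction. Arranging the perturbation in Part (ii) to be at once cheap and strong enough to linearise the dynamics along \emph{every} nearby competing trajectory, and then closing the Gronwall estimate using the cancellation $\sum_j j\zeta_j(t)\,a_j(s)/\zeta_j(s)\le-\tfrac{d}{ds}\!\sum_j j\zeta_j(s)$ together with $\sum_k kp_k<\infty$, is the technical heart of the argument.
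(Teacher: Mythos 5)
The perturbation you introduce is genuinely different from the paper's, and this difference matters: the paper's argument does not run a Gronwall estimate at all, while yours must, and as sketched that estimate does not close.

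The paper's perturbation inserts a \emph{dead zone}: starting from $\varphi_k(t,y)=\rho_k(t)\mathbf 1_{[0,r_k(\boldsymbol{\zeta}(t)))}(y)+\mathbf 1_{[r_k(\boldsymbol{\zeta}(t)),1]}(y)$ it sets
\[
\varphi^\varepsilon_k(t,y)=\frac{\rho_k(t)}{1-\varepsilon}\,\mathbf 1_{[0,(1-\varepsilon)r_k(\boldsymbol{\zeta}(t)))}(y)+\mathbf 1_{[(1+\varepsilon)r_k(\boldsymbol{\zeta}(t)),1]}(y),
\]
so $\varphi^\varepsilon_k(t,\cdot)\equiv 0$ on the annulus $((1-\varepsilon)r_k(\boldsymbol{\zeta}(t)),(1+\varepsilon)r_k(\boldsymbol{\zeta}(t)))$. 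The point is that $\int_0^{a}\varphi^\varepsilon_k(t,y)\,dy$ is \emph{locally constant} in $a$ on that annulus. Hence, whenever a competitor $\tilde{\boldsymbol{\zeta}}$ has $r_k(\tilde{\boldsymbol{\zeta}}(t))$ in the annulus, the right-hand side of \eqref{eq:phi_k} for $\tilde\zeta_k$ is \emph{identical} to that for $\zeta_k$; the two ODEs are the same, not merely of the same form. Uniqueness then follows from a short contradiction argument at the first time $\tau_k$ where $\zeta_k$ and $\tilde\zeta_k$ might separate, and the $k=0$ coordinate is handled by a monotone Skorokhod comparison (Step 2). No Gronwall and no a priori control of $c_k$ is needed.

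Your perturbation instead \emph{extends the flat plateau}: $\varphi^*_k(s,y)=c_k(s)$ for $y<(1+\varepsilon)r_k(\boldsymbol{\zeta}(s))\wedge1$ and $1$ beyond. For a competitor this only makes $\int_0^{r_k(\tilde{\boldsymbol{\zeta}}(s))}\varphi^*_k(s,y)\,dy=c_k(s)\,r_k(\tilde{\boldsymbol{\zeta}}(s))$ \emph{linear} in $r_k(\tilde{\boldsymbol{\zeta}}(s))$, so $\tilde\zeta_k$ and $\zeta_k$ satisfy ODEs of the same shape but with different right-hand sides, coupled through $r(\tilde{\boldsymbol{\zeta}})$ versus $r(\boldsymbol{\zeta})$ and through the Skorokhod map for the $0$-coordinate. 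The Gronwall step you then need has several unresolved points. (i) The coefficients $c_k(s)$ are only controlled in weighted $L\log L$ (the weight being the small quantity $r_k(\boldsymbol{\zeta}(s))$), not in any unweighted $L^1$ sense uniformly in $k$, and they appear divided by $r(\tilde{\boldsymbol{\zeta}})$; your cancellation $\sum_j j\zeta_j(t)a_j(s)/\zeta_j(s)\le-\tfrac{d}{ds}\sum_j j\zeta_j(s)$ uses $\zeta_j(t)\le\zeta_j(s)$ and does not extend to the competitor's coordinates $\tilde\zeta_j(s)$, which is what actually appears when you expand $\Delta=r(\tilde{\boldsymbol{\zeta}})-r(\boldsymbol{\zeta})$. (ii) Replacing $\sum_j j\zeta_j(e^{L_j}-1)$ by $\sum_j j\zeta_j L_j$ requires a uniform-in-$j$ a priori bound on $L_j=\log\tilde\zeta_j-\log\zeta_j$, and no such bound is established; the $\zeta_j$ can be arbitrarily small in $j$ so continuity at $s=0$ is not uniform in $j$. (iii) The $k=0$ term of $\tilde\psi-\psi$ is not covered by your linearisation at all: when $\zeta_0(s)=0<\tilde\zeta_0(s)$ (which can happen on a set of positive measure, e.g.\ in the subcritical phase), the extended flat region for $k=0$ is empty and $\varphi^*_0\equiv 1$ there, so the equation for $\tilde\zeta_0$ is not of the claimed form; the single invocation of Lipschitz continuity of $\Gamma$ does not repair this. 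The paper's dead zone avoids (i)--(iii) entirely precisely because it makes the driving term constant, rather than linear, in the competitor's $r_k$, turning the issue into pure ODE uniqueness.

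So the proposal is not a valid alternative proof as written: the key part (iii) would require an independent and genuinely hard uniqueness theorem for the nonlinear, singular, Skorokhod-constrained system with only $L\log L$-weighted coefficients, and the sketch does not supply one. Replacing the ``extend the flat'' perturbation by the paper's ``insert a dead zone'' perturbation is not a cosmetic change; it is the step that makes the lemma tractable.

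Two smaller remarks. Your part (i) correctly identifies, via Jensen, that the infimum in \eqref{eq:rate_function} is attained at the constant-in-$y$ control $\varphi^c$; this is a clean observation and does not rely on Remark \ref{rmk:unique_varphi}, although the paper simply takes a $\sigma/2$-optimal control to avoid even this. And your overall outline of part (b) (continuation from the first time of disagreement, plus handling the absorbing states via Remark \ref{rmk:property_ODE}(b),(c)) matches the paper's structure; it is only the local uniqueness mechanism on the small interval that is missing.
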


\begin{proof}
	Since $I_T(\boldsymbol{\zeta} ,\psi ) < \infty$, we can choose some $\boldsymbol{\varphi} \in \mathcal{S}_T(\boldsymbol{\zeta} ,\psi )$ such that
	\begin{equation*}
		\sum_{k=0}^\infty \int_{[0,T] \times [0,1]} \ell(\varphi_k(s,y)) \, ds\,dy \le I_T(\boldsymbol{\zeta} ,\psi ) + \frac{\sigma}{2}.
	\end{equation*}
	Next we will modify $\boldsymbol{\varphi}$ to get the desired $\boldsymbol{\varphi}^*$.
	Since $\ell$ is convex and nonnegative and $\ell(1)=0$, we can assume without loss of generality that  for each $k \in \mathbb{N}_0$ and $(t,y) \in [0,T]\times[0,1]$, $\varphi_k(t,y)$ takes the form
	\begin{equation*}
		\varphi_k(t,y) = \rho_k(t) {{1}}_{[0,r_k(\boldsymbol{\zeta}(t)))}(y) + {{1}}_{[r_k(\boldsymbol{\zeta}(t)),1]}(y)
	\end{equation*}
	for some $\rho_k(t) \in [0,\infty)$.
	Fix $\varepsilon \in (0,1)$.  We will shrink the support of $\boldsymbol{\varphi}$ to get the desired $\boldsymbol{\varphi}^*$ for sufficiently small $\varepsilon$.
	For $t \in [0,T]$, let
	\begin{equation*}
		\varphi_k^\varepsilon(t,y) = \frac{\rho_k(t)}{1-\varepsilon} {{1}}_{[0,(1-\varepsilon)r_k(\boldsymbol{\zeta}(t)))}(y) + {{1}}_{[(1+\varepsilon)r_k(\boldsymbol{\zeta}(t)),1]}(y). 
	\end{equation*}
	It is clear that $\boldsymbol{\varphi}^\varepsilon \in \mathcal{S}_T(\boldsymbol{\zeta} ,\psi )$.	
	Note that $\varphi_k^\varepsilon(t,y) = 0$ for $(1-\varepsilon)r_k(\boldsymbol{\zeta}(t)) < y < (1+\varepsilon)r_k(\boldsymbol{\zeta}(t))$, which will be a key when we prove uniqueness in part (b).
	Recall $\tau_{\boldsymbol{\zeta}}$ introduced in \eqref{eq:defntauphi}.
	Then
	\begin{align*}
		& \sum_{k=0}^\infty \int_{[0,T] \times [0,1]} \ell(\varphi_k^\varepsilon(t,y)) \, dt\,dy - \sum_{k=0}^\infty \int_{[0,T] \times [0,1]} \ell(\varphi_k(t,y)) \, dt\,dy \\
		& = \sum_{k=0}^\infty \int_0^{\tau_{\boldsymbol{\zeta}}} \left[ (1-\varepsilon)r_k(\boldsymbol{\zeta}(t)) \ell(\frac{\rho_k(t)}{1-\varepsilon}) + 2\varepsilon r_k(\boldsymbol{\zeta}(t)) \ell(0) - r_k(\boldsymbol{\zeta}(t)) \ell(\rho_k(t)) \right] dt \\
		& = \sum_{k=0}^\infty \int_0^{\tau_{\boldsymbol{\zeta}}} r_k(\boldsymbol{\zeta}(t)) \left[ \left( \rho_k(t) \log (\frac{\rho_k(t)}{1-\varepsilon}) - \rho_k(t) + 1-\varepsilon \right) + 2\varepsilon \right. \\
		& \qquad \left. - \left( \rho_k(t) \log \rho_k(t) - \rho_k(t) + 1 \right) \right] dt \\
		& = \sum_{k=0}^\infty \int_0^{\tau_{\boldsymbol{\zeta}}} r_k(\boldsymbol{\zeta}(t)) \left[ \rho_k(t) \log (\frac{1}{1-\varepsilon}) + \varepsilon \right] dt.
	\end{align*}
	From Lemma \ref{lem:property_ell}(b) we have
	\begin{align*}
		& \sum_{k=0}^\infty \int_{[0,T] \times [0,1]} \ell(\varphi_k^\varepsilon(t,y)) \, dt\,dy - \sum_{k=0}^\infty \int_{[0,T] \times [0,1]} \ell(\varphi_k(t,y)) \, dt\,dy \\
		& \le \sum_{k=0}^\infty \int_0^{\tau_{\boldsymbol{\zeta}}} r_k(\boldsymbol{\zeta}(t)) \left[ \left( \ell(\rho_k(t)) + 2 \right) \log (\frac{1}{1-\varepsilon}) + \varepsilon \right] dt \\
		& = \log (\frac{1}{1-\varepsilon})\sum_{k=0}^\infty \int_{[0,T] \times [0,1]} \ell(\varphi_k(t,y)) \, dt\,dy  + 2 \tau_{\boldsymbol{\zeta}} \log (\frac{1}{1-\varepsilon}) + \tau_{\boldsymbol{\zeta}} \varepsilon \\
		& \le (I_T(\boldsymbol{\zeta} ,\psi ) + \frac{\sigma}{2})\log (\frac{1}{1-\varepsilon}) + 2T\log (\frac{1}{1-\varepsilon}) + T \varepsilon .
	\end{align*}	
	Choosing $\varepsilon$ small enough so that the last display is no larger than $\frac{\sigma}{2}$, we have
	\begin{align*}
		\sum_{k=0}^\infty \int_{[0,T] \times [0,1]} \ell(\varphi_k^\varepsilon(s,y)) \, ds\,dy \le \sum_{k=0}^\infty \int_{[0,T] \times [0,1]} \ell(\varphi_k(s,y)) \, ds\,dy + \frac{\sigma}{2} \le I_T(\boldsymbol{\zeta} ,\psi ) + \sigma.
	\end{align*}
	Part (a) then holds with $\boldsymbol{\varphi}^* = \boldsymbol{\varphi}^\varepsilon$ for  such an $\varepsilon$.
	
	We now show that  part (b) is satisfied with such a $\boldsymbol{\varphi}^*$. Suppose that, in addition to $(\boldsymbol{\zeta} ,\psi )$, there is another pair of $({\tilde{\boldsymbol{\zeta}}},{\tilde{\psi}})$ such that $({\tilde{\boldsymbol{\zeta}}},{\tilde{\psi}}) \in \mathcal{C}_T$ and $\boldsymbol{\varphi}^* \in \mathcal{S}_T({\tilde{\boldsymbol{\zeta}}},{\tilde{\psi}})$.
	Let 
	$$\tau \doteq \inf \{ t \in [0,T] : \boldsymbol{\zeta}(t) \ne {\tilde{\boldsymbol{\zeta}}}(t) \} \wedge T.$$
	We claim that $\tau = T$.
	Once the claim is verified, it follows from continuity of $\boldsymbol{\zeta}$ and ${\tilde{\boldsymbol{\zeta}}}$ that $\boldsymbol{\zeta}(t) = {\tilde{\boldsymbol{\zeta}}}(t)$ for all $t \in [0,T]$.
	Then from \eqref{eq:psi} we will have that $\psi = {\tilde{\psi}}$ proving part (b).
	
	Now we prove the claim that $\tau = T$. We will argue via contradiction.
	Suppose that $\tau < T$.
	To complete the proof, it suffices to reach the following contradiction
	\begin{equation}
		\label{eq:uniqueness_contradiction}
		\boldsymbol{\zeta}(t) = {\tilde{\boldsymbol{\zeta}}}(t), t \in [\tau,\tau+\delta] \mbox{ for some } \delta > 0.
	\end{equation}	
	From definition of $\tau$ and \eqref{eq:psi} it follows that $(\boldsymbol{\zeta}(t),r(\boldsymbol{\zeta}(t)),\psi(t)) = ({\tilde{\boldsymbol{\zeta}}}(t),r({\tilde{\boldsymbol{\zeta}}}(t)),{\tilde{\psi}}(t))$ for all $t < \tau$.
	From Remark \ref{rmk:property_ODE}(a) we have that $r(\boldsymbol{\zeta}(\cdot)), r({\tilde{\boldsymbol{\zeta}}}(\cdot)) \in \mathcal{C}$.
	Then by continuity, $(\boldsymbol{\zeta}(t),r(\boldsymbol{\zeta}(t)),\psi(t)) = ({\tilde{\boldsymbol{\zeta}}}(t),r({\tilde{\boldsymbol{\zeta}}}(t)),{\tilde{\psi}}(t))$ for all $t \le \tau$.
	If $r(\boldsymbol{\zeta}(\tau)) = r({\tilde{\boldsymbol{\zeta}}}(\tau)) = 0$, then from Remark \ref{rmk:property_ODE}(c) we have $\boldsymbol{\zeta}(t) = {\tilde{\boldsymbol{\zeta}}}(t) = \boldsymbol{0}$ for all $t \ge \tau$, which gives \eqref{eq:uniqueness_contradiction}.	
	Now we show \eqref{eq:uniqueness_contradiction} for the remaining case: $r(\boldsymbol{\zeta}(\tau)) = r({\tilde{\boldsymbol{\zeta}}}(\tau)) > 0$.
	For this, note that by continuity of $r(\boldsymbol{\zeta})$ and $r({\tilde{\boldsymbol{\zeta}}})$, there exists some $\delta > 0$ such that for all $t \in [\tau,\tau+\delta]$, 
	\begin{equation}
		\label{eq:uniqueness_key_fraction}
		r(\boldsymbol{\zeta}(t)) > 0, r({\tilde{\boldsymbol{\zeta}}}(t)) > 0, \left| \frac{r(\boldsymbol{\zeta}(t))}{r({\tilde{\boldsymbol{\zeta}}}(t))} - 1 \right| < \varepsilon,
	\end{equation}
	where $\varepsilon$ is as in part (a) and recall that $\boldsymbol{\varphi}^*=\boldsymbol{\varphi}^\varepsilon$.
	We will argue in two steps.\\
	
	Step $1$: We will prove that
	\begin{equation}
		\label{eq:step_1_claim}
		\zeta_k(t) = {\tilde{\zeta}}_k(t) \mbox{ for all } t \in [\tau,\tau+\delta], k \in \mathbb{N}.
	\end{equation}
	Suppose not, namely there exists $k\in \mathbb{N}$ such that
	$$\tau_k \doteq \inf \{t \in [\tau,\tau+\delta] : \zeta_k(t) \ne {\tilde{\zeta}}_k(t) \}\wedge T$$
	satisfies  $\tau \le \tau_k < \tau+\delta$.
	By continuity, we have $\zeta_k(t) = {\tilde{\zeta}}_k(t)$ for $t \le \tau_k$.
	Note that we must have $\zeta_k(\tau_k) = {\tilde{\zeta}}_k(\tau_k) > 0$, since otherwise $\zeta_k(\tau_k) = {\tilde{\zeta}}_k(\tau_k) = 0$ and from Remark \ref{rmk:property_ODE}(b) we see that $\zeta_k(t) = {\tilde{\zeta}}_k(t) = 0$ for all $t \ge \tau_k$, which contradicts the definition of $\tau_k$.
	From \eqref{eq:uniqueness_key_fraction} it then follows that
	\begin{align*}
		r_k(\boldsymbol{\zeta}(\tau_k)) & = \frac{k\zeta_k(\tau_k)}{r(\boldsymbol{\zeta}(\tau_k))} > 0, \\
		| r_k(\boldsymbol{\zeta}(\tau_k)) - r_k({\tilde{\boldsymbol{\zeta}}}(\tau_k)) | & = \left| \frac{k\zeta_k(\tau_k)}{r(\boldsymbol{\zeta}(\tau_k))} - \frac{k{\tilde{\zeta}}_k(\tau_k)}{r({\tilde{\boldsymbol{\zeta}}}(\tau_k))} \right| = \frac{k\zeta_k(\tau_k)}{r(\boldsymbol{\zeta}(\tau_k))} \left| 1 - \frac{r(\boldsymbol{\zeta}(\tau_k)}{r({\tilde{\boldsymbol{\zeta}}}(\tau_k))} \right| < \varepsilon r_k(\boldsymbol{\zeta}(\tau_k)).
	\end{align*}
	Note that the last inequality crucially uses the property that $r_k(\boldsymbol{\zeta}(\tau_k))>0$.

	Once more by continuity, there exists some $\delta_k>0$ such that last two inequalities hold for $t \in [\tau_k,\tau_k+\delta_k]$, namely $$r_k(\boldsymbol{\zeta}(t)) > 0, \: (1-\varepsilon)r_k(\boldsymbol{\zeta}(t)) < r_k({\tilde{\boldsymbol{\zeta}}}(t)) < (1+\varepsilon)r_k(\boldsymbol{\zeta}(t)).$$
	From construction of $\varphi^\varepsilon$, we see that for $t \in [\tau_k,\tau_k+\delta_k]$,
	\begin{equation*}
		\int_{(\tau_k,t] \times [0,1]} {{1}}_{[0,r_k({\tilde{\boldsymbol{\zeta}}}(s)))}(y) \varphi_k^\varepsilon(s,y) \, ds\,dy = \int_{(\tau_k,t] \times [0,1]}  {{1}}_{[0,r_k(\boldsymbol{\zeta}(s)))}(y) \varphi_k^\varepsilon(s,y) \, ds\,dy.
	\end{equation*}
	It then follows from \eqref{eq:phi_k} that $\zeta_k(t) = {\tilde{\zeta}}_k(t)$ for all $t \le \tau_k+\delta_k$.
	This contradicts the definition of $\tau_k$.
	Therefore \eqref{eq:step_1_claim} must hold.\\
	
	Step $2$: We will prove that
	\begin{equation}
		\label{eq:step_2_claim}
		\zeta_0(t) = {\tilde{\zeta}}_0(t) \mbox{ for all } t \in [\tau,\tau+\delta].
	\end{equation}
	Let $\eta(t) \doteq \zeta_0(t) - \psi(t)$ and ${\tilde{\eta}}(t) \doteq {\tilde{\zeta}}_0(t) - {\tilde{\psi}}(t)$.
	From properties of the Skorokhod map $\Gamma$ (see, e.g., \cite[Section 3.6.C]{KaratzasShreve1991brownian}), we have that
	\begin{align}
		& \eta(0) = 0, \eta(t) \mbox{ is non-decreasing and } \int_0^T \zeta_0(t) \, \eta(dt) = 0, \label{eq:step_2_eta} \\
		& {\tilde{\eta}}(0) = 0, {\tilde{\eta}}(t) \mbox{ is non-decreasing and } \int_0^T {\tilde{\zeta}}_0(t) \, {\tilde{\eta}}(dt) = 0 \label{eq:step_2_etatil}.
	\end{align}
	Now consider the function $[\zeta_0(t) - {\tilde{\zeta}}_0(t)]^2$.
	Since $\zeta_0,\psi,{\tilde{\zeta}}_0,{\tilde{\psi}}$ are absolutely continuous, we have for $t \in [\tau,\tau+\delta]$,
	\begin{align}
		& [\zeta_0(t) - {\tilde{\zeta}}_0(t)]^2 \notag \\
		& = [\zeta_0(\tau) - {\tilde{\zeta}}_0(\tau)]^2 + 2 \int_\tau^t (\zeta_0(s) - {\tilde{\zeta}}_0(s)) (\zeta_0'(s) - {\tilde{\zeta}}_0'(s)) \, ds \notag \\
		& = 2 \int_\tau^t (\zeta_0(s) - {\tilde{\zeta}}_0(s)) (\psi'(s) - {\tilde{\psi}}'(s)) \, ds + 2 \int_\tau^t (\zeta_0(s) - {\tilde{\zeta}}_0(s)) (\eta_0'(s) - {\tilde{\eta}}_0'(s)) \, ds. \label{eq:step_2_phi}
	\end{align}
	From \eqref{eq:psi} and \eqref{eq:phi_k} we see that for $t \in [\tau,\tau+\delta]$,
	\begin{align*}
		\psi(t) & = \sum_{k=1}^\infty (k-2) (p_k-\zeta_k(t)) - 2\int_{[0,t] \times [0,1]} {{1}}_{[0,r_0(\boldsymbol{\zeta}(s)))}(y) \, \varphi_0^\varepsilon(s,y) ds\,dy, \\
		{\tilde{\psi}}(t) & = \sum_{k=1}^\infty (k-2) (p_k-{\tilde{\zeta}}_k(t)) - 2\int_{[0,t] \times [0,1]} {{1}}_{[0,r_0({\tilde{\boldsymbol{\zeta}}}(s)))}(y) \, \varphi_0^\varepsilon(s,y) ds\,dy.
	\end{align*}
	Taking the difference of these two displays and using \eqref{eq:step_1_claim}, we have that for $t \in [\tau,\tau+\delta]$,
	\begin{equation}
		\psi(t) - {\tilde{\psi}}(t) = - 2 \int_{[0,t] \times [0,1]} \left( {{1}}_{[0,r_0(\boldsymbol{\zeta}(s)))}(y) - {{1}}_{[0,r_0({\tilde{\boldsymbol{\zeta}}}(s)))}(y) \right) \varphi_0^\varepsilon(s,y) \, ds\,dy. \label{eq:step_2_psi}
	\end{equation}	
	Since for each fixed $y \ge 0$ the function $x\mapsto \frac{x}{x+y}$ is non-decreasing on $(-y,\infty)$, we have from \eqref{eq:step_1_claim} and \eqref{eq:uniqueness_key_fraction} that if for some
	$t \in [\tau,\tau+\delta]$,  $\zeta_0(t) \ge {\tilde{\zeta}}_0(t)$, then
	\begin{align*}
		r_0(\boldsymbol{\zeta}(t)) &= \frac{\zeta_0(t)}{\zeta_0(t) + \sum_{k=1}^\infty k \zeta_k(t)} \\
		&= \frac{\zeta_0(t)}{\zeta_0(t) + \sum_{k=1}^\infty k {\tilde{\zeta}}_k(t)} \ge \frac{{\tilde{\zeta}}_0(t)}{{\tilde{\zeta}}_0(t) + \sum_{k=1}^\infty k {\tilde{\zeta}}_k(t)} = r_0({\tilde{\boldsymbol{\zeta}}}(t)).	
	\end{align*}
	Therefore for $t \in [\tau,\tau+\delta]$,
	\begin{equation*}
		{{1}}_{[0,r_0(\boldsymbol{\zeta}(t)))}(y) \ge {{1}}_{[0,r_0({\tilde{\boldsymbol{\zeta}}}(t)))}(y) \mbox{ when } \zeta_0(t) \ge {\tilde{\zeta}}_0(t).
	\end{equation*}
	and similarly
	\begin{equation*}
		{{1}}_{[0,r_0(\boldsymbol{\zeta}(t)))}(y) \le {{1}}_{[0,r_0({\tilde{\boldsymbol{\zeta}}}(t)))}(y) \mbox{ when } \zeta_0(t) \le {\tilde{\zeta}}_0(t).
	\end{equation*}
	Combining these two inequalities with \eqref{eq:step_2_psi}, we see that
	\begin{equation}
		\label{eq:step_2_mono_1}
		(\zeta_0(s) - {\tilde{\zeta}}_0(s)) (\psi'(s) - {\tilde{\psi}}'(s)) \le 0, \mbox{ a.e. } s \in [\tau,\tau+\delta].
	\end{equation}
	Next from \eqref{eq:step_2_eta} and \eqref{eq:step_2_etatil} we see that for $t \in [\tau,\tau+\delta]$,
	\begin{align*}
		\int_\tau^t {{1}}_{\{\zeta_0(s) > {\tilde{\zeta}}_0(s)\}} (\zeta_0(s) - {\tilde{\zeta}}_0(s)) (\eta_0'(s) - {\tilde{\eta}}_0'(s)) \, ds & \le \int_\tau^t {{1}}_{\{\zeta_0(s) > {\tilde{\zeta}}_0(s)\}} (\zeta_0(s) - {\tilde{\zeta}}_0(s)) \eta_0'(s) \, ds \\
		& \le \int_\tau^t {{1}}_{\{\zeta_0(s) > 0\}} \zeta_0(s) \, \eta_0(ds) \\
		& = 0,
	\end{align*}
	and similarly
	\begin{equation*}
		\int_\tau^t {{1}}_{\{\zeta_0(s) < {\tilde{\zeta}}_0(s)\}} (\zeta_0(s) - {\tilde{\zeta}}_0(s)) (\eta_0'(s) - {\tilde{\eta}}_0'(s)) \, ds \le 0.
	\end{equation*}
	Combining these two inequalities with \eqref{eq:step_2_mono_1} and \eqref{eq:step_2_phi}, we have for $t \in [\tau,\tau+\delta]$
	\begin{equation*}
		[\zeta_0(t) - {\tilde{\zeta}}_0(t)]^2 \le 0.
	\end{equation*} 
	Thus \eqref{eq:step_2_claim} holds.	
Combining \eqref{eq:step_1_claim} and \eqref{eq:step_2_claim} in Steps $1$ and $2$ gives \eqref{eq:uniqueness_contradiction} and completes the proof.	
\end{proof}

We can now complete the proof of the Laplace lower bound. Fix $h \in \mathbb{%
C}_b(\mathcal{D}_\infty \times \mathcal{D})$ and $\sigma \in (0,1)$. Fix
some $\sigma$-optimal $(\boldsymbol{\zeta}^*,\psi^*) \in \mathcal{C}_T$ with $%
I_T(\boldsymbol{\zeta}^*,\psi^*) < \infty$, namely 
\begin{equation*}
I_T(\boldsymbol{\zeta}^*,\psi^*) + h(\boldsymbol{\zeta}^*,\psi^*) \le \inf_{(\boldsymbol{\zeta} ,\psi ) \in \mathcal{D}%
_\infty \times \mathcal{D}} \left\{ I_T(\boldsymbol{\zeta} ,\psi ) + h(\boldsymbol{\zeta} ,\psi ) \right\} +
\sigma.
\end{equation*}
Let $\boldsymbol{\varphi}^* \in \mathcal{S}_T(\boldsymbol{\zeta}^*,\psi^*)$ be as in Lemma \ref%
{lem:uniqueness} (with $(\boldsymbol{\zeta},\psi)$ there replaced by $(\boldsymbol{\zeta}^*,\psi^*)$). 
For each $n \in \mathbb{N}$ and $(s,y) \in [0,T] \times [0,1]$, consider the
deterministic control 
\begin{align*}
\varphi^n_k(s,y) & \doteq \frac{1}{n} {{1}}_{\{\varphi^*_k(s,y)
\le \frac{1}{n}\}} + \varphi^*_k(s,y) {{1}}_{\{\frac{1}{n} <
\varphi^*_k(s,y) < n\}} + n {{1}}_{\{\varphi^*_k(s,y) \ge n\}}, k
\le n, \\
\varphi^n_k(s,y) & \doteq 1, k > n.
\end{align*}
Then $\boldsymbol{\varphi}^n \doteq (\varphi^n_k) \in \bar{\mathcal{A}}_b$ and from %
\eqref{eq:mainrepn17} we have 
\begin{equation*}
-\frac{1}{n} \log {{E}} e^{-nh(\boldsymbol{X}^{n},Y^{n})} \le {{E}} \left\{
\sum_{k=0}^\infty \int_{[0,T] \times [0,1]} \ell(\varphi_k^n(s,y)) \, ds\,dy
+ h({\bar{\boldsymbol{X}}}^{n},{\bar{Y}}^{n}) \right\},
\end{equation*}
where $({\bar{\boldsymbol{X}}}^{n}, {\bar{Y}}^{n})$ are given as in %
\eqref{eq:Ybar_n_upper_temp}--\eqref{eq:Xbar_n_k_upper_temp}. Noting that
for all $n \in \mathbb{N}$, $k \in \mathbb{N}_0$ and $(s,y) \in [0,T] \times
[0,1]$, $\ell(\varphi^n_k(s,y)) \le \ell(\varphi^*_k(s,y))$, we have from
Lemma \ref{lem:uniqueness}(a) that \eqref{eq:cost_bd_upper} holds with $M_0$
replaced by $I_T(\boldsymbol{\zeta}^*,\psi^*) + 1$. Define $\{\bar{\boldsymbol{\nu}}^{n}\}$ as in \eqref{eq:nu_n_upper} with controls $\boldsymbol{\varphi}^n$. From Lemma \ref{lem:tightness} it follows that $%
\{(\bar{\boldsymbol{\nu}}^{n},{\bar{\boldsymbol{X}}}^{n},{\bar{Y}}^{n})\}$ is
tight. Assume without loss of generality that $(\bar{\boldsymbol{\nu}}^{n},{\bar{\boldsymbol{X}}}^{n},{\bar{Y}}^{n})$ converges along the whole sequence
weakly to $(\bar{\boldsymbol{\nu}},{\bar{\boldsymbol{X}}},{\bar{Y}})$, given on some probability space $%
(\Omega^*,\mathcal{F}^*,{P}^*)$. From the construction of $%
\boldsymbol{\varphi}^n$ we must have $\bar{\boldsymbol{\nu}} = \bar{\boldsymbol{\nu}}^{\boldsymbol{\varphi}^*}$ a.s.\ ${P}^*$, where $\bar{\boldsymbol{\nu}}^{\boldsymbol{\varphi}^*}$ is as defined in \eqref{eq:nu_n_upper} using $\boldsymbol{\varphi}^*$.
By Lemma \ref{lem:char_limit} we have $({\bar{\boldsymbol{X}}},{\bar{Y}}) \in \mathcal{C}%
_T$ and $\boldsymbol{\varphi}^* \in \mathcal{S}_T({\bar{\boldsymbol{X}}},{\bar{Y}})$ a.s.\ ${%
P}^*$. From Lemma \ref{lem:uniqueness}(b) it now follows that $(%
{\bar{\boldsymbol{X}}},{\bar{Y}}) = (\boldsymbol{\zeta}^*,\psi^*)$ a.s.\ ${P}^*$. Finally,
from Lemma \ref{lem:uniqueness}(a), 
\begin{align*}
\limsup_{n \to \infty} -\frac{1}{n} \log {{E}} e^{-nh(\boldsymbol{X}^{n},Y^{n})} & \le
\limsup_{n \to \infty} {{E}} \left\{ \sum_{k=0}^\infty \int_{[0,T]
\times [0,1]} \ell(\varphi_k^n(s,y)) \, ds\,dy + h({\bar{\boldsymbol{X}}}^n,{\bar{Y}}^n)
\right\} \\
& \le \sum_{k=0}^\infty \int_{[0,T] \times [0,1]} \ell(\varphi_k^*(s,y)) \,
ds\,dy + {{E}}^* h({\bar{\boldsymbol{X}}}, \bar Y) \\
& = \sum_{k=0}^\infty \int_{[0,T] \times [0,1]} \ell(\varphi_k^*(s,y)) \,
ds\,dy + h(\boldsymbol{\zeta}^*,\psi^*) \\
& \le I_T(\boldsymbol{\zeta}^*,\psi^*) + h(\boldsymbol{\zeta}^*,\psi^*) + \sigma \\
& \le \inf_{(\boldsymbol{\zeta} ,\psi ) \in \mathcal{D}_\infty \times \mathcal{D}} \left\{
I_T(\boldsymbol{\zeta} ,\psi ) + h(\boldsymbol{\zeta} ,\psi ) \right\} + 2\sigma.
\end{align*}
Since $\sigma \in (0,1)$ is arbitrary, this completes the proof of the
Laplace lower bound.

\section{Compact Sub-level Sets}

\label{sec:rate_function}

In this section we prove that the function $I_T$ defined in %
\eqref{eq:rate_function} is a rate function, namely the set $\Gamma_N \doteq
\{ (\boldsymbol{\zeta} ,\psi ) \in \mathcal{D}_\infty \times \mathcal{D} : I_T(\boldsymbol{\zeta} ,\psi )
\le N \}$ is compact for each fixed $N \in [0,\infty)$.

Take any sequence $\{(\boldsymbol{\zeta}^n,\psi^n)_{n \in \mathbb{N}}\} \subset \Gamma_N$.
Then $(\boldsymbol{\zeta}^n,\psi^n) \in \mathcal{C}_T$ and there exists some $\frac{1}{n}$%
-optimal $\boldsymbol{\varphi}^n \in \mathcal{S}_T(\boldsymbol{\zeta}^n,\psi^n)$, namely 
\begin{equation}  \label{eq:cost_bd_rate}
\sum_{k=0}^\infty \int_{[0,T] \times [0,1]} \ell(\varphi_k^n(s,y)) \, ds\,dy
\le I_T(\boldsymbol{\zeta}^n,\psi^n) + \frac{1}{n} \le N + \frac{1}{n}.
\end{equation}
Recalling \eqref{eq:psi} and \eqref{eq:phi_k} and letting $\eta^n(t) \doteq
\zeta^n_0(t) - \psi^n(t)$, we can write for $t \in [0,T]$, 
\begin{equation}
\zeta^n_0(t) = \Gamma(\psi^n) = \psi^n(t) + \eta^n(t) = \sum_{k=0}^\infty
(k-2) B_k^n(t) + \eta^n(t),  \label{eq:phi_n_0_rate}
\end{equation}
where 
\begin{equation}
B_k^n(t) \doteq \int_{[0,t] \times [0,1]} {{1}}%
_{[0,r_k(\boldsymbol{\zeta}^n(s)))}(y) \, \varphi_k^n(s,y) \,ds\,dy, k \in \mathbb{N}_0.
\label{eq:B_n_k_rate}
\end{equation}
From standard properties of one-dimensional Skorokhod Problem we have 
\begin{equation}
\eta^n(0) = 0, \eta^n(t) \mbox{ is non-decreasing and } \int_0^T {%
{1}}_{\{\zeta^n_0(t)>0\}} \, \eta^n(dt) = 0.  \label{eq:eta_n_rate}
\end{equation}

\begin{Lemma}
\label{lem:UI_rate} Define for $K \in \mathbb{N}$, 
\begin{equation*}
U_K \doteq \sup_{n \in \mathbb{N}} \sum_{k=K}^\infty \int_{[0,T] \times
[0,1]} k \varphi^n_k(s,y) {{1}}_{[0,r_k(\boldsymbol{\zeta}^n(s)))}(y) \,ds\,dy.
\end{equation*}
Then as $K \to \infty$, 
\begin{equation*}
(U_K, \sup_{n \in \mathbb{N}} \sum_{k=K}^\infty k \|B^n_k\|_\infty, \sup_{n
\in \mathbb{N}} \sum_{k=K}^\infty k \|\zeta^n_k\|_\infty) \to {\boldsymbol{0}}%
.
\end{equation*}
\end{Lemma}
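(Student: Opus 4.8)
The plan is to reduce all three quantities appearing in the statement to the single deterministic tail sum $\sum_{k=K}^\infty k p_k$, and then invoke the moment assumption to see that this tends to $0$.

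First I would use that $\boldsymbol{\varphi}^n \in \mathcal{S}_T(\boldsymbol{\zeta}^n,\psi^n)$: comparing \eqref{eq:phi_k} with the definition \eqref{eq:B_n_k_rate} of $B_k^n$ gives, for every $k \in \mathbb{N}$ and $t \in [0,T]$,
\[
\zeta_k^n(t) = p_k - B_k^n(t).
\]
By property (c) in the definition of $\mathcal{C}_T$, each $\zeta_k^n$ is non-negative, non-increasing, and satisfies $\zeta_k^n(0) = p_k$; hence $B_k^n$ is non-decreasing with $0 \le B_k^n(t) \le p_k$ for all $t \in [0,T]$. Consequently $\|\zeta_k^n\|_\infty = \zeta_k^n(0) = p_k$ and $\|B_k^n\|_\infty = B_k^n(T) = p_k - \zeta_k^n(T) \le p_k$. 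Moreover, evaluating \eqref{eq:B_n_k_rate} at $t=T$ shows that the $k$-th summand in the definition of $U_K$ equals $k B_k^n(T) = k\bigl(p_k - \zeta_k^n(T)\bigr) \le k p_k$. Therefore, uniformly in $n$,
\[
U_K \le \sum_{k=K}^\infty k p_k, \qquad \sum_{k=K}^\infty k\|B_k^n\|_\infty \le \sum_{k=K}^\infty k p_k, \qquad \sum_{k=K}^\infty k\|\zeta_k^n\|_\infty = \sum_{k=K}^\infty k p_k.
\]

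Finally I would invoke Remark \ref{rem1.1}(i) (a consequence of Assumptions \ref{asp:convgN} and \ref{asp:exponential-boundN}), which gives $\sum_{k=1}^\infty p_k k^{1+\varepsilon_{\boldsymbol{p}}} < \infty$, so that
\[
\sum_{k=K}^\infty k p_k \le K^{-\varepsilon_{\boldsymbol{p}}} \sum_{k=1}^\infty p_k k^{1+\varepsilon_{\boldsymbol{p}}} \longrightarrow 0 \quad \text{as } K \to \infty,
\]
and combining this with the previous display completes the proof. There is essentially no obstacle here: the only point worth isolating is the identity $\zeta_k^n = p_k - B_k^n$, which, together with the monotonicity of $\zeta_k^n$, replaces each sup-norm by its value at the endpoints $0$ or $T$ and bounds every term by the $n$-independent tail $\sum_{k\ge K} k p_k$.
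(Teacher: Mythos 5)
Your proof is correct and follows essentially the same route as the paper's: both use the identity $\zeta_k^n = p_k - B_k^n$ (from \eqref{eq:phi_k} and \eqref{eq:B_n_k_rate}) together with the monotonicity from property (c) of $\mathcal{C}_T$ to bound each of the three tail sums by $\sum_{k\ge K}kp_k$, and then invoke the moment condition. You simply spell out the last step (via Remark~\ref{rem1.1}(i)) a bit more explicitly than the paper, which stops at citing Assumptions~\ref{asp:convgN} and~\ref{asp:exponential-boundN}.
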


\begin{proof}
	From \eqref{eq:B_n_k_rate} and \eqref{eq:phi_k} (applied to $\boldsymbol{\zeta}^n$) it follows that for $K \in \mathbb{N}$,
	\begin{align*}
		& U_K = \sup_{n \in \mathbb{N}} \sum_{k=K}^\infty k B^n_k(T) = \sup_{n \in \mathbb{N}} \sum_{k=K}^\infty k \left( p_k - \zeta^n_k(T) \right) \le \sum_{k=K}^\infty k p_k, \\
		& \sup_{n \in \mathbb{N}} \sum_{k=K}^\infty k \|B^n_k\|_\infty \le \sup_{n \in \mathbb{N}} \sum_{k=K}^\infty k \|\zeta^n_k\|_\infty = \sum_{k=K}^\infty k p_k.
	\end{align*}
	The result then follow from Assumptions \ref{asp:convgN} and \ref{asp:exponential-boundN}.
\end{proof}

Write $\boldsymbol{B}^n = (B^n_k)_{n \in \mathbb{N}_0}$ and let $\boldsymbol{\nu}^n$ be
defined as in \eqref{eq:nu_n_upper} with deterministic controls $\boldsymbol{\varphi}^n$. The following lemma shows that $%
\{(\boldsymbol{\nu}^n,\boldsymbol{\zeta}^n,\psi^n,\boldsymbol{B}^n,\eta^n)\}$ is pre-compact. The proof is
similar to that of Lemma \ref{lem:tightness} so we only provide a sketch
here.

\begin{Lemma}
\label{lem:pre_compact_rate} $\{(\boldsymbol{\nu}^n,\boldsymbol{\zeta}^n,\psi^n,\boldsymbol{B}^n,\eta^n)\} 
$ is pre-compact in $[\mathcal{M}_{FC}([0,T]\times[0,1])]^\infty \times 
\mathcal{C}_\infty \times \mathcal{C} \times \mathcal{C}_\infty \times 
\mathcal{C}$.
\end{Lemma}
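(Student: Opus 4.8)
The plan is to follow the proof of Lemma \ref{lem:tightness} step by step, replacing the stochastic tightness arguments by deterministic Arzel\`a--Ascoli arguments and using the cost bound \eqref{eq:cost_bd_rate} in place of \eqref{eq:cost_bd_upper}; there are no martingale terms to handle here. I would begin with $\{\boldsymbol{\nu}^n\}$. Since $[0,T]\times[0,1]$ is compact, Lemma \ref{lem:property_ell}(b) and \eqref{eq:cost_bd_rate} give $\nu^n_k([0,T]\times[0,1])=\int_{[0,T]\times[0,1]}\varphi^n_k(s,y)\,ds\,dy\le N+1+2T$ for all $k\in\mathbb{N}_0$ and $n\in\mathbb{N}$. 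On a compact metric space a family of measures with uniformly bounded total mass is relatively compact in the vague topology (which here coincides with the weak topology on $\mathcal{M}([0,T]\times[0,1])$ and with $\mathcal{M}_{FC}([0,T]\times[0,1])$, the space being compact), so each $\{\nu^n_k\}_n$ is pre-compact, and a diagonal extraction yields pre-compactness of $\{\boldsymbol{\nu}^n\}$ in $[\mathcal{M}_{FC}([0,T]\times[0,1])]^\infty$.

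Next I would treat $\{\boldsymbol{B}^n\}$. For each $k\in\mathbb{N}_0$, $B^n_k$ is non-negative, non-decreasing, $B^n_k(0)=0$, with $\|B^n_k\|_\infty\le N+1+2T$ (and $\|B^n_k\|_\infty\le p_k$ for $k\in\mathbb{N}$, from \eqref{eq:phi_k} applied to $\boldsymbol{\zeta}^n$ and $\zeta^n_k\ge0$). For equicontinuity uniform in $n$, I would reuse the uniform-integrability estimate from the proof of Lemma \ref{lem:tightness}: for $0\le s<t\le T$ and $M>1$,
\begin{equation*}
|B^n_k(t)-B^n_k(s)|\le\int_{(s,t]\times[0,1]}\varphi^n_k(u,y)\,du\,dy\le M(t-s)+\gamma(M)(N+1),
\end{equation*}
where on $\{\varphi^n_k\le M\}$ one uses $\varphi^n_k\le M$ and on $\{\varphi^n_k>M\}$ one uses $\varphi^n_k\le\gamma(M)\ell(\varphi^n_k)$ from Lemma \ref{lem:property_ell}(a) together with \eqref{eq:cost_bd_rate}; letting $t-s\to0$ and then $M\to\infty$ gives the claim. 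By Arzel\`a--Ascoli each $\{B^n_k\}_n$ is pre-compact in $\mathcal{C}$, hence $\{\boldsymbol{B}^n\}$ is pre-compact in $\mathcal{C}_\infty$, since coordinatewise convergence in $\mathcal{C}$ implies convergence in $\mathcal{C}_\infty$ for a product metric on $\mathbb{R}^\infty$. As $\zeta^n_k=p_k-B^n_k$ for $k\in\mathbb{N}$, the same holds for $\{(\zeta^n_k)_{k\in\mathbb{N}}\}_n$.

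For $\psi^n=\sum_{k=0}^\infty(k-2)B^n_k$ I would use $|\psi^n(t)|\le 2B^n_0(t)+\sum_{k\ge1}k\,p_k\le 2(N+1+2T)+\mu$ for boundedness, and for equicontinuity split the sum at a level $K$: the finite part $\sum_{k<K}|k-2|\,|B^n_k(t)-B^n_k(s)|$ is equicontinuous uniformly in $n$ by the previous paragraph, while the tail is dominated, uniformly in $n$ and in $s\le t$, by $\sum_{k\ge K}k\,B^n_k(T)=U_K$, which tends to $0$ as $K\to\infty$ by Lemma \ref{lem:UI_rate}; hence $\{\psi^n\}$ is pre-compact in $\mathcal{C}$. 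Since $\zeta^n_0=\Gamma(\psi^n)$ and $\eta^n=\Gamma(\psi^n)-\psi^n$, and $\Gamma$ is Lipschitz on $(\mathcal{C},\|\cdot\|_\infty)$, the sequences $\{\zeta^n_0\}$ and $\{\eta^n\}$ are also pre-compact in $\mathcal{C}$; combining the coordinate $k=0$ with the coordinates $k\in\mathbb{N}$ gives pre-compactness of $\{\boldsymbol{\zeta}^n\}$ in $\mathcal{C}_\infty$.

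Putting these together, a final diagonal extraction over the (finitely many groups of) coordinates produces a subsequence along which $\boldsymbol{\nu}^n$, $\boldsymbol{\zeta}^n$, $\psi^n$, $\boldsymbol{B}^n$, $\eta^n$ all converge in the respective spaces, which is exactly pre-compactness of $\{(\boldsymbol{\nu}^n,\boldsymbol{\zeta}^n,\psi^n,\boldsymbol{B}^n,\eta^n)\}$ in the product space. The one delicate point, as in Lemma \ref{lem:tightness}, is the equicontinuity of the $B^n_k$'s: the controls $\varphi^n_k$ need not be bounded, so one must exploit the superlinearity of $\ell$ through Lemma \ref{lem:property_ell}(a), supplemented by the uniform tail bound of Lemma \ref{lem:UI_rate} when passing from the coordinates $B^n_k$ to the infinite sum defining $\psi^n$.
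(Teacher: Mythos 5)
Your proposal is correct and follows essentially the same route as the paper's (sketched) argument: both reduce to the cost bound \eqref{eq:cost_bd_rate}, use Lemma \ref{lem:property_ell}(a) to control the integrals of $\varphi^n_k$ on high-value sets via $\gamma(M)$, invoke Lemma \ref{lem:UI_rate} for the tail in $k$ when treating $\psi^n$, and use the Lipschitz Skorokhod map to pass from $\psi^n$ to $\zeta^n_0,\eta^n$; the only difference is organizational (you establish equicontinuity of each $B^n_k$ separately and then handle the $\psi^n$ tail, while the paper directly bounds $\sup_{|t-s|\le\delta}\sum_k(k+2)|B^n_k(t)-B^n_k(s)|$ in one step).
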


\begin{proof}
	It suffices to argue pre-compactness of each coordinate.
	From \eqref{eq:cost_bd_rate} it follows that \eqref{eq:cost_bd_upper} holds with $K_0$ replaced by $N+1$.
	Then as in the proof of Lemma \ref{lem:tightness} we have that $\{\nu^{\varphi^n}\}$ is pre-compact in $[\mathcal{M}([0,T]\times[0,1])]^\infty$.
	
	Next we consider $\{(\psi^n,\boldsymbol{B}^n)\}$.
	Since $\psi^n(0) = 0$ and $B^n_k(0) = 0$ for each $k \in \mathbb{N}_0$, in order to argue pre-compactness of $\{(\psi^n,\boldsymbol{B}^n)\}$, it suffices to show that
	\begin{align*}
		& \limsup_{\delta \to 0} \limsup_{n \to \infty} \sup_{|t-s|\le\delta} |\psi^n(t) - \psi^n(s)| = 0, \\
		& \limsup_{\delta \to 0} \limsup_{n \to \infty} \sup_{|t-s|\le\delta} |B^n_k(t) - B^n_k(s)| = 0, k \in \mathbb{N}_0. 
	\end{align*}
	From \eqref{eq:psi} (applied to $\boldsymbol{\zeta}^n$) and \eqref{eq:B_n_k_rate} we have for $0 \le t-s \le \delta$,
	\begin{equation*}
		|\psi^n(t) - \psi^n(s)| \le \sum_{k=0}^\infty (k+2) |B^n_k(t) - B^n_k(s)|.
	\end{equation*}
	It then suffices to show
	\begin{equation}
		\label{eq:pre_compact_B_rate}
		\limsup_{\delta \to 0} \limsup_{n \to \infty} \sup_{|t-s|\le\delta} \sum_{k=0}^\infty (k+2) |B^n_k(t) - B^n_k(s)| = 0.
	\end{equation}
	Note that for  $0 \le t-s \le \delta$
	\begin{equation*}
		\sum_{k=0}^\infty (k+2) |B^n_k(t) - B^n_k(s)| = \sum_{k=0}^\infty \int_{(s,t] \times [0,1]} (k+2) \varphi^n_k(u,y)  {{1}}_{[0,r_k(\boldsymbol{\zeta}^n(u)))}(y) \,du\,dy. 
	\end{equation*}
	For every $K \in \mathbb{N}$, $M \in (0,\infty)$, the last expression can be bounded above by
	\begin{align*}
		& \sum_{k=0}^{K-1} \left[ \int_{(s,t] \times [0,1]} (k+2) \varphi^n_k(u,y) {{1}}_{\{ \varphi^n_k(u,y) > M \}}  {{1}}_{[0,r_k(\boldsymbol{\zeta}^n(u)))}(y) \,du\,dy \right. \\
		& \qquad \left. + \int_{(s,t] \times [0,1]} (k+2) \varphi^n_k(u,y) {{1}}_{\{ \varphi^n_k(u,y) \le M \}}  {{1}}_{[0,r_k(\boldsymbol{\zeta}^n(u)))}(y) \,du\,dy \right] + 3U_K \\
		& \le \sum_{k=0}^{K-1} \int_{(s,t] \times [0,1]} (K+1) \gamma(M) \ell(\varphi^n_k(u,y)) \,du\,dy + K(K+1)M\delta + 3U_K \\
		& \le (K+1)\gamma(M)(N+1) + K(K+1)M\delta + 3 U_K,
	\end{align*}
	where the first inequality follows from Lemma \ref{lem:property_ell}(a) and the last inequality follows from \eqref{eq:cost_bd_rate}.
	Therefore
	\begin{equation*}
		\limsup_{\delta \to 0} \limsup_{n \to \infty} \sup_{|t-s|\le\delta} |\psi^n(t) - \psi^n(s)| \le (K+1)\gamma(M)(N+1) + 3 U_K.
	\end{equation*}
	Taking $M \to \infty$ and then $K \to \infty$, we have from Lemma \ref{lem:property_ell}(a) and Lemma \ref{lem:UI_rate} that \eqref{eq:pre_compact_B_rate} holds.
	So $\{(\psi^n,\boldsymbol{B}^n)\}$ is pre-compact in $\mathcal{C} \times \mathcal{C}_\infty$.	
	
	Finally we consider $\{(\boldsymbol{\zeta}^n,\eta^n)\}$.
	Since we have shown that $\{(\psi^n,\boldsymbol{B}^n)\}$ is pre-compact,  we have from the relation $\zeta^n_k = p_k - B^n_k$, the pre-compactness of $\{\zeta^n_k\}$ in $\mathcal{C}$ for each $k \in \mathbb{N}$.
	Since  $\zeta^n_0 = \Gamma(\psi^n) = \psi^n + \eta^n$, we have pre-compactness of $\{(\zeta^n_0, \eta^n)\}$ in $\mathcal{C}\times \mathcal{C}$.
	This completes the proof.	
\end{proof}

The following lemma characterizes limit points of $(\boldsymbol{\nu}^n,\boldsymbol{\zeta}^n,%
\psi^n,\boldsymbol{B}^n,\eta^n)$. Much of the proof is similar to that of Lemma \ref{lem:char_limit} except the proof of \eqref{eq:B_k_rate} for $k=0$.

\begin{Lemma}
\label{lem:cvg_rate} Suppose $(\boldsymbol{\nu}^n,\boldsymbol{\zeta}^n,\psi^n,\boldsymbol{B}^n,\eta^n)$
converges along a subsequence to $(\boldsymbol{\nu},\boldsymbol{\zeta},\psi,\boldsymbol{B},\eta) \in [\mathcal{M}%
([0,T]\times[0,1])]^\infty \times \mathcal{C}_\infty \times \mathcal{C}
\times \mathcal{C}_\infty \times \mathcal{C}$. Then the following holds.

\begin{enumerate}[\upshape(a)]

\item For each $k \in \mathbb{N}_0$, $\nu_k \ll \lambda_T$, and letting $%
\varphi_k \doteq \frac{d\nu_k}{d\lambda_T}$, 
\begin{equation*}
\sum_{k=0}^\infty \int_{[0,T] \times [0,1]} \ell(\varphi_k(s,y)) \, ds\,dy
\le N.
\end{equation*}

\item For each $t\in[0,T]$, 
\begin{align*}  \label{eq:phi_k_rate}
\zeta_0(t) & = \Gamma(\psi)(t) = \psi(t) + \eta(t), \\
\\
\zeta_k(t) & = p_k - B_k(t), \; k \in \mathbb{N}, \\
\psi(t) & = \sum_{k=0}^\infty (k-2) B_k(t).
\end{align*}

\item For each $t\in[0,T]$, 
\begin{equation}
B_k(t) = \int_{[0,t] \times [0,1]} {{1}}_{[0,r_k(\boldsymbol{\zeta}(s)))}(y)
\, \varphi_k(s,y) ds\,dy, k \in \mathbb{N}_0,  \label{eq:B_k_rate}
\end{equation}
in particular $(\boldsymbol{\zeta}, \psi) \in \mathcal{C}_T$ and $\boldsymbol{\varphi} \in \mathcal{S}%
_T(\boldsymbol{\zeta}, \psi)$.
\end{enumerate}
\end{Lemma}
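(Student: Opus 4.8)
The plan is to follow the scheme of the proof of Lemma \ref{lem:char_limit}, now in the deterministic setting where all the martingale (tilde) terms are absent; the only genuinely new point is the proof of \eqref{eq:B_k_rate} for $k=0$ near the terminal part of the trajectory. For part (a), \eqref{eq:cost_bd_rate} gives $\sup_n\sum_k\int_{[0,T]\times[0,1]}\ell(\varphi^n_k)\,ds\,dy\le N+1$, so each $\nu^n_k$ has $\lambda_T$-density $\varphi^n_k$ with uniformly bounded $\ell$-cost; by Lemma A.1 of \cite{BudhirajaChenDupuis2013large} the limit $\nu_k$ is absolutely continuous with respect to $\lambda_T$, and lower semicontinuity of $\varphi\mapsto\int\ell(\varphi)$ together with Fatou's lemma in $k$ yields $\sum_k\int\ell(\varphi_k)\,ds\,dy\le N$. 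For part (b) one passes to the limit in the already-established prelimit relations: $\zeta_k=p_k-B_k$ for $k\in\mathbb{N}$ follows from $\zeta^n_k=p_k-B^n_k$; Lemma \ref{lem:UI_rate} makes $\sum_{k\ge K}(k-2)B^n_k$ uniformly small in $t$, so $\psi=\sum_k(k-2)B_k$ follows from \eqref{eq:phi_n_0_rate}; and since $\Gamma$ is Lipschitz on $\mathcal{C}$, $\zeta^n_0=\Gamma(\psi^n)\to\Gamma(\psi)$, which combined with $\zeta^n_0\to\zeta_0$ gives $\zeta_0=\Gamma(\psi)=\psi+\eta$ with $\eta=\lim_n(\zeta^n_0-\psi^n)$.

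For part (c), write $\tau_{\boldsymbol{\zeta}}$ for the quantity in \eqref{eq:defntauphi}; Lemma \ref{lem:UI_rate} also gives $r(\boldsymbol{\zeta}^n(\cdot))\to r(\boldsymbol{\zeta}(\cdot))$ uniformly, with $r(\boldsymbol{\zeta}(\cdot))$ continuous. For $t<\tau_{\boldsymbol{\zeta}}$ (hence $r(\boldsymbol{\zeta}(t))>0$) and $t=\tau_{\boldsymbol{\zeta}}$, \eqref{eq:B_k_rate} is obtained for every $k$ exactly as in the corresponding step of Lemma \ref{lem:char_limit}: the indicators $\mathbf{1}_{[0,r_k(\boldsymbol{\zeta}^n(s)))}(y)$ converge $\lambda_t$-a.e.\ to $\mathbf{1}_{[0,r_k(\boldsymbol{\zeta}(s)))}(y)$, and the uniform integrability of the $\varphi^n_k$ coming from superlinearity of $\ell$ and \eqref{eq:cost_bd_rate} lets us pass to the limit in \eqref{eq:B_n_k_rate}, the case $t=\tau_{\boldsymbol{\zeta}}$ following by continuity. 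The remaining, and main, case is $t>\tau_{\boldsymbol{\zeta}}$. Here one first checks that $r(\boldsymbol{\zeta}^n(\cdot))$ is non-increasing: from \eqref{eq:phi_n_0_rate} and $\zeta^n_k=p_k-B^n_k$ one has the identity $r(\boldsymbol{\zeta}^n)(t)=\sum_{k\in\mathbb{N}}kp_k-2\sum_{k\in\mathbb{N}_0}B^n_k(t)+\eta^n(t)$, and differentiating it and using $(B^n_k)'(s)=\int_0^{r_k(\boldsymbol{\zeta}^n(s))}\varphi^n_k(s,y)\,dy\ge0$, the fact from \eqref{eq:eta_n_rate} that $\eta^n(ds)$ is carried by $\{\zeta^n_0=0\}$, and the observation that $r_0(\boldsymbol{\zeta}^n(s))=0$ (hence $(B^n_0)'(s)=0$) whenever $\zeta^n_0(s)=0$, one obtains for a.e.\ $s$ both $\frac{d}{ds}r(\boldsymbol{\zeta}^n(s))\le0$ and $(B^n_0)'(s)\le-\frac{1}{2}\frac{d}{ds}r(\boldsymbol{\zeta}^n(s))$. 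Consequently $r(\boldsymbol{\zeta}(\cdot))$ is non-increasing, so $r(\boldsymbol{\zeta}(t))=0$ and $\boldsymbol{\zeta}(t)=\boldsymbol{0}$ for $t\ge\tau_{\boldsymbol{\zeta}}$ (using $0\le\zeta_0\le r(\boldsymbol{\zeta})$); in particular the right side of \eqref{eq:B_k_rate} is constant on $[\tau_{\boldsymbol{\zeta}},T]$, so it suffices to show $B_k(t)=B_k(\tau_{\boldsymbol{\zeta}})$ there. For $k\in\mathbb{N}$ this follows from $0\le B^n_k(t)-B^n_k(\tau_{\boldsymbol{\zeta}})=\zeta^n_k(\tau_{\boldsymbol{\zeta}})-\zeta^n_k(t)\le\zeta^n_k(\tau_{\boldsymbol{\zeta}})\to\zeta_k(\tau_{\boldsymbol{\zeta}})=0$, and for $k=0$ from $0\le B^n_0(t)-B^n_0(\tau_{\boldsymbol{\zeta}})\le\frac{1}{2}\big(r(\boldsymbol{\zeta}^n(\tau_{\boldsymbol{\zeta}}))-r(\boldsymbol{\zeta}^n(t))\big)\le\frac{1}{2}r(\boldsymbol{\zeta}^n(\tau_{\boldsymbol{\zeta}}))\to0$. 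This gives \eqref{eq:B_k_rate} for all $t$ and $k$; absolute continuity of $\psi$ and of the $\zeta_k$ (and hence $(\boldsymbol{\zeta},\psi)\in\mathcal{C}_T$, $\boldsymbol{\varphi}\in\mathcal{S}_T(\boldsymbol{\zeta},\psi)$) then follows from \eqref{eq:B_k_rate}, $\varphi_k\le\ell(\varphi_k)+2$ and $\sum_k kp_k<\infty$, together with $\zeta_0=\Gamma(\psi)$ from part (b), exactly as in Lemma \ref{lem:char_limit}(b),(d).

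The step I expect to be the main obstacle is precisely the $k=0$ part of \eqref{eq:B_k_rate} for $t>\tau_{\boldsymbol{\zeta}}$. In Lemma \ref{lem:char_limit} the analogous estimate used the explicit Poisson-integral representation of the regulator $\bar\eta^n$; in the present deterministic model the regulator is $\eta^n=\Gamma(\psi^n)-\psi^n$, and a naive estimate of $B^n_0(t)-B^n_0(\tau_{\boldsymbol{\zeta}})$ through the relation $\zeta^n_0=\Gamma(\psi^n)$ suffers a cancellation between the $B^n_0$ and $\eta^n$ contributions. The resolution is the monotonicity of $r(\boldsymbol{\zeta}^n)$ and the pointwise bound $(B^n_0)'\le-\frac{1}{2}(r(\boldsymbol{\zeta}^n))'$; carrying this out cleanly requires care with the a.e.\ identities for $(\zeta^n_0)'$, $(\psi^n)'$ and $(\eta^n)'$ on the set $\{\zeta^n_0=0\}$ and with the interchange of summation and differentiation, which can be justified using Lemma \ref{lem:UI_rate} and \eqref{eq:cost_bd_rate}.
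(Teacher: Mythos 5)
Your proposal is correct and follows the paper's overall scheme: parts (a) and (b) are handled in the same way (absolute continuity from Lemma A.1 of \cite{BudhirajaChenDupuis2013large} plus lower semicontinuity of the $\ell$-cost; passing to the limit in the prelimit relations with the help of Lemma \ref{lem:UI_rate}), and part (c) is split into $t<\tau_{\boldsymbol{\zeta}}$, $t=\tau_{\boldsymbol{\zeta}}$, and $t>\tau_{\boldsymbol{\zeta}}$ exactly as in the paper, with the $k\in\mathbb{N}$ case on $(\tau_{\boldsymbol{\zeta}},T]$ handled identically. The only genuine difference is in the $k=0$ estimate for $t>\tau_{\boldsymbol{\zeta}}$. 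The paper bounds $|\eta^n(t)-\eta^n(\tau_{\boldsymbol{\zeta}})|\le\sum_{k\ge1}|k-2|\,|B^n_k(t)-B^n_k(\tau_{\boldsymbol{\zeta}})|$ by exploiting that $\eta^n(ds)$ is carried by $\{\zeta^n_0=0\}$, that ${1}_{\{\zeta^n_0=0\}}B^n_0(ds)=0$ and that ${1}_{\{\zeta^n_0=0\}}(\zeta^n_0)'=0$ a.e.; it then feeds this into the triangle inequality applied to \eqref{eq:phi_n_0_rate} and invokes the estimate of \eqref{eq:middl} to arrive at $\sup_{\tau_{\boldsymbol{\zeta}}<t\le T}|B^n_0(t)-B^n_0(\tau_{\boldsymbol{\zeta}})|\le 4r(\boldsymbol{\zeta}^n(\tau_{\boldsymbol{\zeta}}))\to 0$. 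You instead differentiate the identity $r(\boldsymbol{\zeta}^n)(t)=\sum_k kp_k-2\sum_k B^n_k(t)+\eta^n(t)$, combine the very same three facts about the regulator and $B^n_0$ on $\{\zeta^n_0=0\}$, and obtain the pointwise inequalities $\tfrac{d}{ds}r(\boldsymbol{\zeta}^n(s))\le 0$ and $(B^n_0)'(s)\le -\tfrac12\tfrac{d}{ds}r(\boldsymbol{\zeta}^n(s))$ a.e.; integrating gives $B^n_0(t)-B^n_0(\tau_{\boldsymbol{\zeta}})\le\tfrac12 r(\boldsymbol{\zeta}^n(\tau_{\boldsymbol{\zeta}}))\to 0$. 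The two arguments use the same key observations in a different packaging; yours makes the monotonicity of $r(\boldsymbol{\zeta}^n(\cdot))$ (which the paper uses implicitly via ``as in the proof of \eqref{eq:middl}'') explicit and gets a slightly cleaner constant, at the cost of having to justify an exchange of summation and a.e.\ differentiation, which you correctly flag as needing Lemma \ref{lem:UI_rate} and \eqref{eq:cost_bd_rate}.
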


\begin{proof}
	Assume without loss of generality that
	\begin{equation}
		\label{eq:cvg_rate_joint}
		(\boldsymbol{\nu}^n,\boldsymbol{\zeta}^n,\psi^n,\boldsymbol{B}^n,\eta^n) \to (\boldsymbol{\nu},\boldsymbol{\zeta},\psi,\boldsymbol{B},\eta)
	\end{equation} 
	as $n \to \infty$ along the whole sequence.
	
	(a) This is an immediate consequence of the bound in \eqref{eq:cost_bd_rate} and Lemma A.1 of \cite{BudhirajaChenDupuis2013large}.

	(b)
	Using \eqref{eq:cvg_rate_joint} we see that the first equation follows from \eqref{eq:phi_n_0_rate}
	and continuity of $\Gamma$, the second equation follows from the relation $\zeta_k^n = p_k- B_k^n$, and the last equation follows from the equality $\psi^n = \sum_{k=0}^\infty (k-2) B_k^n$ and Lemma \ref{lem:UI_rate}.
	
	(c)
	From \eqref{eq:cvg_rate_joint} and Lemma \ref{lem:UI_rate} we have
	\begin{equation*}
		r(\boldsymbol{\zeta}^n(t)) = (\zeta^n_0(t))^+ + \sum_{k=1}^\infty k \zeta^n_k(t) \to (\zeta_0(t))^+ + \sum_{k=1}^\infty k \zeta_k(t) = r(\boldsymbol{\zeta}(t))
	\end{equation*}
	uniformly in $t \in [0,T]$ as $n \to \infty$.
	Therefore $r(\boldsymbol{\zeta}(\cdot))$ is continuous.
	Let $\tau \doteq \inf \{ t \in [0,T] : r(\boldsymbol{\zeta}(t)) = 0\} \wedge T$.
	We will argue that \eqref{eq:B_k_rate} holds for all $t < \tau$, $t = \tau$ and $t > \tau$. The proof is similar to that of \eqref{eq:Bbar_k_upper}.
	
	For $t < \tau$, we have $r(\boldsymbol{\zeta}(t)) > 0$.
	Hence, as in proof of \eqref{eq:cgceofindic}, for each $k \in \mathbb{N}_0$,
	\begin{equation*}
		 {{1}}_{[0,r_k(\boldsymbol{\zeta}^n(s)))}(y) \to  {{1}}_{[0,r_k(\boldsymbol{\zeta}(s)))}(y)
	\end{equation*}
	as $n \to \infty$ for $\lambda_t$-a.e.\ $(s,y) \in [0,t] \times [0,1]$.
	Now using \eqref{eq:cost_bd_rate}, we have exactly as in the proof of \eqref{eq:cgcebhatkt} that
	as $n \to \infty$,
	\begin{align*}
		B^n_k(t) & \to \int_{[0,t] \times [0,1]} {{1}}_{[0,r_k(\boldsymbol{\zeta}(s)))}(y) \, \varphi_k(s,y) ds\,dy.
	\end{align*}
	So \eqref{eq:B_k_rate} holds for $t < \tau$.
	
	Since \eqref{eq:B_k_rate} holds for $t < \tau$, it also holds for $t = \tau$ by continuity of $B$ and of the right side in \eqref{eq:B_k_rate}.
	
	Now consider 	 $T\ge t > \tau$. Proof of the case $k \in \mathbb{N}$ is identical to the proof of \eqref{eq:Bbar_k_upper} for $k \in \mathbb{N}$ given below \eqref{eq:cgcebhatkt}.
	Next we show \eqref{eq:B_k_rate} for $k =0$.
	First note that from \eqref{eq:eta_n_rate} and \eqref{eq:phi_n_0_rate} we have for $\tau < t \le T$,
	\begin{equation*}
		|\eta^n(t) - \eta^n(\tau)| = \int_\tau^t \, \eta^n(ds) = \int_\tau^t {{1}}_{\{\zeta^n_0(s) = 0\}} \, \eta^n(ds) = \int_\tau^t {{1}}_{\{\zeta^n_0(s) = 0\}} \, (\zeta^n_0 - \sum_{k=0}^\infty (k-2)B^n_k)(ds).
	\end{equation*}
	From \eqref{eq:B_n_k_rate} we see that  $\int_\tau^t {{1}}_{\{\zeta^n_0(s) = 0\}} \, B^n_0(ds) = 0$.
	Also since $\zeta_0^n$ is non-negative and absolutely continuous, we have ${{1}}_{\{\zeta^n_0(s) = 0\}} (\zeta^n_0)'(s) = 0$ for a.e.\ $s \in [0,T]$.
	Therefore
	\begin{equation*}
		|\eta^n(t) - \eta^n(\tau)| 
		\le \sum_{k=1}^\infty |k-2| |B^n_k(t) - B^n_k(\tau)|.
	\end{equation*}
	Applying triangle inequality to \eqref{eq:phi_n_0_rate} and using this estimate, we see that
	\begin{align*}
		 \sup_{\tau < t \le T} |B^n_0(t) - B^n_0(\tau)| 
		 \le \sup_{\tau < t \le T} |\zeta^n_0(t) - \zeta^n_0(\tau)| + 2 \sum_{k=1}^\infty |k-2|\sup_{\tau < t \le T} |B^n_k(t) - B^n_k(\tau)|.
	\end{align*}
	Now as in the proof of \eqref{eq:middl} we have
	\begin{align*}
				\sup_{\tau < t \le T} |B^n_0(t) - B^n_0(\tau)| & \le 
				4 r(\boldsymbol{\zeta}^n(\tau)),
	\end{align*}
	which converges to $4 r(\boldsymbol{\zeta}(\tau))=0$ as $n \to \infty$.
	Hence $B_0(t) = B_0(\tau)$ for $\tau < t \le T$ and this gives \eqref{eq:B_k_rate} for $k=0$.
	
	Since we have proved \eqref{eq:B_k_rate} for all $t < \tau$, $t = \tau$ and $t > \tau$, part (c) follows.
\end{proof}

\noindent\textbf{Proof of compact sub-level sets $\Gamma_M$:} Now we are
ready to prove that $\Gamma_M$ is compact for each fixed $M \in [0,\infty)$.
Recall $(\boldsymbol{\zeta}^n,\psi^n)$ introduced above %
\eqref{eq:cost_bd_rate} and $\boldsymbol{\nu}^n$ introduced above Lemma \ref{lem:pre_compact_rate}. From Lemma \ref{lem:pre_compact_rate} we have
pre-compactness of $\{(\boldsymbol{\nu}^n,\boldsymbol{\zeta}^n,\psi^n)\}$ in $[\mathcal{M}%
([0,T]\times[0,1])]^\infty \times \mathcal{C}_\infty \times \mathcal{C}$.
Assume without loss of generality that $(\boldsymbol{\nu}^n,\boldsymbol{\zeta}^n,\psi^n)$
converges along the whole sequence to some $(\boldsymbol{\nu},\boldsymbol{\zeta},\psi)$.
By Lemma \ref{lem:cvg_rate} $(\boldsymbol{\zeta},\psi) \in \mathcal{C}_T$ and $\boldsymbol{\nu}= \boldsymbol{\nu}^{\boldsymbol{\varphi}}$,
where for $k \in \mathbb{N}_0$, ${\nu}_k^{\boldsymbol{\varphi}}$ is as defined by the right side of  \eqref{eq:nu_n_upper} replacing ${\varphi}_k^n$ with ${\varphi}_k$,
and 
\begin{equation*}
I_T(\boldsymbol{\zeta} ,\psi ) \le \sum_{k=0}^\infty \int_{[0,T] \times [0,1]}
\ell(\varphi_k(s,y)) \, ds\,dy \le M.
\end{equation*}
Therefore $(\boldsymbol{\zeta} ,\psi ) \in \Gamma_M$ which proves that $\Gamma_M$ is
compact. This completes the proof that $I_T(\cdot)$ defined in %
\eqref{eq:rate_function} is a rate function.

\begin{Remark}
\label{rmk:unique_varphi} Suppose that for all $n \in \mathbb{N}$,  $(\boldsymbol{\zeta}^n,\psi^n) =
(\boldsymbol{\zeta} ,\psi )$ for some $(\boldsymbol{\zeta} ,\psi ) \in \mathcal{C}_T$ with $I_T(\boldsymbol{\zeta} ,\psi ) <
\infty$ and $M = I_T(\boldsymbol{\zeta} ,\psi )$. Then taking $\boldsymbol{\varphi}^n$  satisfying \eqref{eq:cost_bd_rate} (with $(\boldsymbol{\zeta}^n,\psi^n)$ replaced with $(\boldsymbol{\zeta} ,\psi )$),
we see from the above argument that there
exists some $\boldsymbol{\varphi} \in \mathcal{S}_T(\boldsymbol{\zeta} ,\psi )$ such that 
\begin{equation*}
I_T(\boldsymbol{\zeta} ,\psi ) \le \sum_{k=0}^\infty \int_{[0,T] \times [0,1]}
\ell(\varphi_k(s,y)) \, ds\,dy \le I_T(\boldsymbol{\zeta} ,\psi ),
\end{equation*}
namely $I_T(\boldsymbol{\zeta} ,\psi )$ is achieved at some $\boldsymbol{\varphi} \in \mathcal{S}%
_T(\boldsymbol{\zeta} ,\psi )$.
\end{Remark}

\vspace{\baselineskip}\noindent \textbf{Acknowledgement:} The research of SB was supported in part by the National Science Foundation (DMS-1606839), the National Science Foundation (DMS-1613072) and the Army Research Office (W911NF-17-1-0010).
The research of AB was supported in part by the National Science Foundation (DMS-1305120), the Army Research Office
(W911NF-14-1-0331) and DARPA (W911NF-15-2-0122).
The research of PD was supported in part by National Science Foundation (DMS-1317199) and DARPA (W911NF-15-2-0122). 
The research of RW was supported in part by DARPA (W911NF-15-2-0122).

\bibliographystyle{plain}
\begin{bibdiv}
\begin{biblist}

\bib{bender1978asymptotic}{article}{
      author={Bender, Edward~A},
      author={Canfield, E~Rodney},
       title={The asymptotic number of labeled graphs with given degree
  sequences},
        date={1978},
     journal={Journal of Combinatorial Theory, Series A},
      volume={24},
      number={3},
       pages={296\ndash 307},
}

\bib{bollobas1980probabilistic}{article}{
      author={Bollob{\'a}s, B{\'e}la},
       title={A probabilistic proof of an asymptotic formula for the number of
  labelled regular graphs},
        date={1980},
     journal={European Journal of Combinatorics},
      volume={1},
      number={4},
       pages={311\ndash 316},
}

\bib{bordenave2015large}{article}{
      author={Bordenave, Charles},
      author={Caputo, Pietro},
       title={Large deviations of empirical neighborhood distribution in sparse
  random graphs},
        date={2015},
     journal={Probability Theory and Related Fields},
      volume={163},
      number={1-2},
       pages={149\ndash 222},
}

\bib{borgs2008convergent}{article}{
      author={Borgs, Christian},
      author={Chayes, Jennifer~T},
      author={Lov{\'a}sz, L{\'a}szl{\'o}},
      author={S{\'o}s, Vera~T},
      author={Vesztergombi, Katalin},
       title={Convergent sequences of dense graphs i: Subgraph frequencies,
  metric properties and testing},
        date={2008},
     journal={Advances in Mathematics},
      volume={219},
      number={6},
       pages={1801\ndash 1851},
}

\bib{borgs2012convergent}{article}{
      author={Borgs, Christian},
      author={Chayes, Jennifer~T},
      author={Lov{\'a}sz, L{\'a}szl{\'o}},
      author={S{\'o}s, Vera~T},
      author={Vesztergombi, Katalin},
       title={Convergent sequences of dense graphs {I}{I}. multiway cuts and
  statistical physics},
        date={2012},
     journal={Annals of Mathematics},
      volume={176},
      number={1},
       pages={151\ndash 219},
}

\bib{BoueDupuis1998variational}{article}{
      author={Bou{\'e}, M.},
      author={Dupuis, P.},
       title={{A variational representation for certain functionals of Brownian
  motion}},
        date={1998},
     journal={The Annals of Probability},
      volume={26},
      number={4},
       pages={1641\ndash 1659},
}

\bib{BudhirajaChenDupuis2013large}{article}{
      author={Budhiraja, A.},
      author={Chen, J.},
      author={Dupuis, P.},
       title={{Large deviations for stochastic partial differential equations
  driven by a Poisson random measure}},
        date={2013},
     journal={Stochastic Processes and their Applications},
      volume={123},
      number={2},
       pages={523\ndash 560},
}

\bib{BudhirajaDupuis2000variational}{article}{
      author={Budhiraja, A.},
      author={Dupuis, P.},
       title={{A variational representation for positive functionals of
  infinite dimensional Brownian motion}},
        date={2000},
     journal={Probability and Mathematical Statistics},
      volume={20},
      number={1},
       pages={39\ndash 61},
}

\bib{BudhirajaDupuisGanguly2015moderate}{article}{
      author={Budhiraja, A.},
      author={Dupuis, P.},
      author={Ganguly, A.},
       title={{Moderate deviation principles for stochastic differential
  equations with jumps}},
        date={2016},
     journal={The Annals of Probability},
      volume={44},
      number={3},
       pages={1723\ndash 1775},
}

\bib{BudhirajaDupuisMaroulas2011variational}{article}{
      author={Budhiraja, A.},
      author={Dupuis, P.},
      author={Maroulas, V.},
       title={{Variational representations for continuous time processes}},
        date={2011},
     journal={Annales de l'Institut Henri Poincar{\'e}(B), Probabilit{\'e}s et
  Statistiques},
      volume={47},
      number={3},
       pages={725\ndash 747},
}

\bib{BudhirajaWu2017moderate}{article}{
      author={Budhiraja, A.},
      author={Wu, R.},
       title={{Moderate deviation principles for weakly interacting particle
  systems}},
        date={2017},
        ISSN={1432-2064},
     journal={Probability Theory and Related Fields},
      volume={168},
      number={3},
       pages={721\ndash 771},
         url={http://dx.doi.org/10.1007/s00440-016-0723-3},
}

\bib{chatterjee2011large}{article}{
      author={Chatterjee, Sourav},
      author={Varadhan, SR~Srinivasa},
       title={The large deviation principle for the {E}rd{\H{o}}s-{R}{\'e}nyi
  random graph},
        date={2011},
     journal={European Journal of Combinatorics},
      volume={32},
      number={7},
       pages={1000\ndash 1017},
}

\bib{choi2013large}{article}{
      author={Choi, Jihyeok},
      author={Sethuraman, Sunder},
       title={Large deviations for the degree structure in preferential
  attachment schemes},
        date={2013},
     journal={The Annals of Applied Probability},
      volume={23},
      number={2},
       pages={722\ndash 763},
}

\bib{DupuisEllis2011weak}{book}{
      author={Dupuis, P.},
      author={Ellis, R.~S.},
       title={{A Weak Convergence Approach to the Theory of Large Deviations}},
      series={Wiley series in probability and mathematical statistics:
  Probability and statistics},
   publisher={John Wiley \& Sons, New York},
        date={1997},
      volume={902},
}

\bib{fortunato2010community}{article}{
      author={Fortunato, Santo},
       title={Community detection in graphs},
        date={2010},
     journal={Physics Reports},
      volume={486},
      number={3},
       pages={75\ndash 174},
}

\bib{IkedaWatanabe1990SDE}{book}{
      author={Ikeda, N.},
      author={Watanabe, S.},
       title={{Stochastic Differential Equations and Diffusion Processes}},
      series={North-Holland Mathematical Library},
   publisher={Elsevier},
        date={1981},
      volume={24},
}

\bib{Janson2009new}{article}{
      author={Janson, S.},
      author={Luczak, M.~J},
       title={{A new approach to the giant component problem}},
        date={2009},
     journal={Random Structures \& Algorithms},
      volume={34},
      number={2},
       pages={197\ndash 216},
}

\bib{KaratzasShreve1991brownian}{book}{
      author={Karatzas, I.},
      author={Shreve, S.~E.},
       title={{Brownian Motion and Stochastic Calculus}},
      series={Graduate Texts in Mathematics},
   publisher={Springer New York},
        date={1991},
      volume={113},
        ISBN={9780387976556},
}

\bib{Kurtz1981approximation}{book}{
      author={Kurtz, T.~G.},
       title={{Approximation of Population Processes}},
      series={CBMS-NSF Regional Conference Series in Applied Mathematics},
   publisher={SIAM},
        date={1981},
      volume={36},
}

\bib{lancichinetti2011finding}{article}{
      author={Lancichinetti, Andrea},
      author={Radicchi, Filippo},
      author={Ramasco, Jos{\'e}~J},
      author={Fortunato, Santo},
       title={Finding statistically significant communities in networks},
        date={2011},
     journal={PloS One},
      volume={6},
      number={4},
       pages={e18961},
}

\bib{lovasz2012large}{book}{
      author={Lov{\'a}sz, L{\'a}szl{\'o}},
       title={Large networks and graph limits},
   publisher={American Mathematical Society Providence},
        date={2012},
      volume={60},
}

\bib{molloy1995critical}{article}{
      author={Molloy, Michael},
      author={Reed, Bruce},
       title={A critical point for random graphs with a given degree sequence},
        date={1995},
     journal={Random Structures \& Algorithms},
      volume={6},
      number={2-3},
       pages={161\ndash 180},
}

\bib{newman2002spread}{article}{
      author={Newman, Mark~EJ},
       title={Spread of epidemic disease on networks},
        date={2002},
     journal={Physical Review E},
      volume={66},
      number={1},
       pages={016128},
}

\bib{o1998some}{article}{
      author={O'Connell, Neil},
       title={Some large deviation results for sparse random graphs},
        date={1998},
     journal={Probability Theory and Related Fields},
      volume={110},
      number={3},
       pages={277\ndash 285},
}

\bib{puhalskii2005stochastic}{article}{
      author={Puhalskii, Anatolii~A},
       title={Stochastic processes in random graphs},
        date={2005},
     journal={The Annals of Probability},
      volume={33},
      number={1},
       pages={337\ndash 412},
}

\bib{Hofstad2016}{book}{
      author={Van Der~Hofstad, Remco},
       title={Random {G}raphs and {C}omplex {N}etworks},
   publisher={Cambridge University Press},
        date={2016},
      volume={1},
}

\end{biblist}
\end{bibdiv}

\textsc{\noindent S. Bhamidi and A. Budhiraja\newline
Department of Statistics and Operations Research\newline
University of North Carolina\newline
Chapel Hill, NC 27599, USA\newline
email: bhamidi@email.unc.edu, budhiraj@email.unc.edu \vspace{\baselineskip} }

\textsc{\noindent P. Dupuis and R. Wu\newline
Division of Applied Mathematics\newline
Brown University\newline
Providence, RI 02912, USA\newline
email: paul\_dupuis@brown.edu, ruoyu\_wu@brown.edu }

\end{document}